\documentclass[sn-mathphys-num]{sn-jnl}

\usepackage{graphicx}%
\usepackage{multirow}%
\usepackage{amsmath,amssymb,amsfonts}%
\usepackage{amsthm}%
\usepackage{mathrsfs}%
\usepackage[title]{appendix}%
\usepackage{xcolor}%
\usepackage{textcomp}%
\usepackage{manyfoot}%
\usepackage{booktabs}%
\usepackage{algorithm}%
\usepackage{algorithmicx}%
\usepackage{algpseudocode}%
\usepackage{listings}%
\usepackage{mathtools}
\usepackage{tikz}
\usepackage{amscd} 
\usetikzlibrary{cd}
\mathtoolsset{showonlyrefs}

\numberwithin{equation}{section}

\theoremstyle{thmstyleone}%
\newtheorem{theorem}{Theorem}[section]
\newtheorem{proposition}[theorem]{Proposition}%
\newtheorem{lemma}[theorem]{Lemma}%
\newtheorem{cor}[theorem]{Corollary}

\theoremstyle{definition}%
\newtheorem{definition}[theorem]{Definition}%
\newtheorem{example}[theorem]{Example}%
\newtheorem{remark}[theorem]{Remark}%

\begin{document}

\title[Doubly Totally-Umbilical Statistical Submanifolds in the Probability Simplex]{Doubly Totally-Umbilical Statistical Submanifolds in the Probability Simplex}


\author{\fnm{Ryu} \sur{Ueno}}



\abstract{We give a complete classification of doubly totally-umbilical submanifolds in the probability simplex.
The probability simplex is one of the most standard statistical manifolds, and the classical information geometry initiated by S. Amari and H. Nagaoka is the statistical submanifold theory of the probability simplex.
Autoparallel submanifolds in the probability simplex with respect to the e-connection or the m-connection are the main focus in Information geometry, since they are characterized as important probability distribution families.
On the other hand, H. Furuhata defined doubly totally-umbilical submanifolds in statistical manifolds inspired by the surface theory of the Euclidean space.
In this study, we were able to give complete classification of the doubly totally-umbilical submanifolds in the probability simplex.
We first introduce new results about basic aspects on doubly totally-umbilical submanifolds, and then give the complete classification in the probability simplex.}

\keywords{Information geometry, Probability simplex, Umbilical submanifolds, Hessian manifolds}


\pacs[MSC Classification]{53B12, 53B25, 53C40}

\maketitle

\section{Introduction}\label{sec1}
The purpose of this paper is to initiate the submanifold theory of the probability simplex, and we start by classifying doubly totally-umbilical submanifolds.
Classical information geometry developed by S. Amari and H. Nagaoka studies the differential geometry of statistical models as submanifolds in the probability simplex \cite{AmariNagaoka2000}.
The probability simplex $(\Delta^n,g^F,\nabla^{(\mathrm{e})})$ consists of the Fisher metric $g^F$ and the exponential connection $\nabla^{(\mathrm{e})}$ on the set of positive probability distributions $\Delta^n=\{p:\{1,\ldots,n+1\}\to(0,1)\mid \sum_{\omega=1}^{n+1}p(\omega)=1\}$ which is a manifold.
The tools $g^F,\nabla^{(\mathrm{e})}$ are induced by Chentsov’s theorem under the invariance of Markov embeddings.
Moreover, the Riemannian manifold $(\Delta^n,g^F)$ is isometric to an open set of the Euclidean sphere of radius $2$, thus its sectional curvature is constant equal to $\frac{1}{4}$.
On $\Delta^n$, another affine connection called the mixture connection $\nabla^{(\mathrm{m})}$ is defined by the following equation$:$
\begin{align}
    \label{conjeq}
    Xg^F(Y,Z) = g^F\left(\nabla^{(\mathrm{e})}_X Y,Z\right) + g^F\left(Y,\nabla^{(\mathrm{m})}_X Z\right),\quad X,Y,Z\in\Gamma(TM). 
\end{align}
A submanifold $M\subset\Delta^n$ is called a \textit{statistical model}, or a \textit{family of probability distributions}.
In particular, it is known that $M$ is an \textit{exponential family} if and only if it is $\nabla^{(\mathrm{e})}$-autoparallel, and $M$ is a \textit{mixture family} if and only if it is $\nabla^{(\mathrm{m})}$-autoparallel.
While these two classes have been extensively studied, much less is known about more general submanifolds in the probability simplex \cite{AmariNagaoka2000}.\\

On the other hand, the geometry of statistical manifolds studies the differential-geometric structure arising in information geometry.
A pair $(g,\nabla)$ of a Riemannian metric $g$ and a torsion-free affine connection $\nabla$ on a manifold $M$ is called a \textit{statistical structure} if $\nabla g$ is a totally symmetric tensor field, and here the triplet $(M,g,\nabla)$ is called a \textit{statistical manifold}.
Another affine connection called the \textit{conjugate connection} $\nabla^*$ is defined by the equation analogous to \eqref{conjeq}.
H. Furuhata introduced doubly totally-umbilical submanifolds as the natural analogues of totally-umbilical submanifolds in Riemannian geometry \cite{Furuhata2024Toward}.
Totally-umbilical submanifolds have been extensively studied and continue to attract considerable attention in Riemannian geometry \cite{Usiraj,Chen_1980,JIMENEZ2022101862,Siddesha2019,Chen2019}, and in pseudo-Riemannian geometry \cite{b47c6540-6bcd-33b7-ac07-4e5c2b947949,10.21099}.\\

The definition of doubly totally-umbilical submanifolds is given in Defition \ref{defum}.
We show that the doubly totally-umbilical submanifolds in the probability simplex are described as follows:\\

\noindent\textbf{Theorem \ref{maint}.} A submanifold $M^m$ of the probability simplex $(\Delta^n,g^F,\nabla^{\mathrm{(e)}})$ is doubly totally-umbilical if and only if it is contained in the following mixture family$\mathrm{:}$
\begin{align*}
    \Xi_{a_{i_1},\ldots,a_{i_{\mathcal{A}}},b_{j_1},\ldots,b_{j_{\mathcal{B}}},\phi} = \left\{p\in\Delta^n\mid p(\omega_{i_k}) = a_{i_k}p(\omega_{\phi(i_k)}),\,p(\omega_{j_l})=b_{j_l}\right\},
\end{align*}
where $i_1,\ldots,i_{\mathcal{A}},j_1,\ldots,j_{\mathcal{B}}\in\{1,\ldots,n+1\}$ are $\mathcal{A}+\mathcal{B}=n-m$ distinct integers, $\phi:\{i_1,\ldots,i_{\mathcal{A}}\}\to(\{1,\ldots,n+1\}\setminus\{i_1,\ldots,i_{\mathcal{A}},j_1,\ldots,j_{\mathcal{B}}\})$, and $a_{i_1},\ldots,a_{i_{\mathcal{A}}}$ are positive real numbers and $b_{j_1},\ldots,b_{j_{\mathcal{B}}}\in(0,1)$.\\

In particular, if $n=m+1$, then either one of the following hold$:$
\renewcommand{\labelenumi}{$\mathrm{(\theenumi)}$}
\begin{enumerate}
        \item The submanifold $M$ is doubly totally-umbilical if and only if there exist an\\ $\omega\in\{1,\ldots,m+1\}$ and $0<b<1$ such that $M$ is contained in 
        \begin{align*}
            \{p\in\Delta^{m+1}\mid p(\omega) = b\}.     
        \end{align*} 
        \item The submanifold $M$ is doubly autoparallel if and only if there exist two distinct $\omega_1,\omega_2\in\{1,\ldots,m+1\}$ and an $a>0$ such that $M$ is contained in 
        \begin{align*}
            \{p\in\Delta^{m+1}\mid p(\omega_1) = ap(\omega_2)\}.
        \end{align*}
\end{enumerate}
It is important to note that every doubly autoparallel submanifolds are doubly totally-umbilical submanifolds.\\

We review the submanifold theory of statistical manifolds in Section \ref{dtusec}. 
The proof relies on the fact that the statistical manifold $(\Delta^n,g^F,\nabla^{(\mathrm{e})})$ is a Hessian manifold, namely, a statistical manifold whose affine connection is flat. 
Since Hessian manifolds carry the Hessian curvature tensor, we derive formulas for submanifolds in terms of the Hessian curvature of the ambient space. 
In particular, it turns out that doubly totally-umbilical submanifolds in the probability simplex are also Hessian manifolds.\\

In Theorem \ref{dtudenorm2}, we also classify the doubly totally-umbilical submanifolds in the denormalized state space $(\mathbb{R}^+)^{n+1}$ with the Hessian structure $(g_0,D)$ from \cite{AmariNagaoka2000} or \cite{HF2013}.
This is because the probability simplex $(\Delta^n,g^F,\nabla^{(\mathrm{e})})$ is embedded into $((\mathbb{R}^+)^{n+1},g_0,D)$ as a doubly totally-umbilical statistical submanifold.
Therefore, any doubly totally-umbilical submanifold $M$ of the probability simplex can be embedded into $((\mathbb{R}^+)^{n+1},g_0,D)$ as a doubly totally-umbilical submanifold.
Consequently, the submanifold $M$ can be realized as the intersection of $\Delta^n$ and a doubly totally-umbilical submanifold of the denormalized state space.\\

\section{Preliminaries}\label{sec2}

All objects in this paper are assumed to be smooth, and $M=M^m$ denotes a connected manifold of dimension $m\geq2$.
We denote by $\Gamma(E)$ the set of sections of a vector bundle $E$ over $M$.
In particular, we denote by $T^{(p,q)}M$ the tensor bundle of type $(p,q)$.

\subsection{Statistical manifolds}
Let $g$ be a Riemannian metric on $M$, and $\nabla$ a torsion-free affine connection on $M$.
The triplet $(M,g,\nabla)$ is called a \textit{statistical manifold} if 
\begin{equation}
    (\nabla_X g)(Y,Z) = (\nabla_Y g)(X,Z)
\end{equation}
holds for any vector fields $X,Y,Z\in\Gamma(TM)$.
Here, the pair $(g,\nabla)$ is called the \textit{statistical structure} on $M$.
The Levi-Civita connection of $g$ will be denoted by $\nabla^g$.
The equation $\nabla g = 0$ is equivalent to $\nabla=\nabla^g$.\\

\begin{definition}
    The \textit{conjugate connection} $\nabla^{*}$ of a statistical manifold $(M,g,\nabla)$ is a torsion-free affine connection on $M$ defined by
    \begin{equation}
        Xg(Y,Z)=g(\nabla_X Y,Z)+g(Y,\nabla^{*}_X Z),\quad X,Y,Z\in\Gamma(TM).
    \end{equation} 
\end{definition}

\begin{remark}
    Conjugate connections are often called \textit{dual connections} and are sometimes denoted by $\overline{\nabla}$ instead of $\nabla^{*}$.\\
\end{remark}

\begin{remark}
    For a statistical manifold $(M,g,\nabla)$, the triplet $(M,g,\nabla^{*})$ is also a statistical manifold.
    In fact, the equality $\nabla g=-\nabla^{*}g$ holds. \\
\end{remark}

The statistical manifold in the next example is the main object of this paper and is called the probability simplex.
See \cite{AmariNagaoka2000,8006748,Nay} for its role in information geometry.
For differential geometric aspects of the probability simplex, see \cite{FURUHATA2011S86,HF2013,shima2007geometry,Ohara2017} for details.\\

\begin{example}
    \label{probsimp}
    Let $\Omega_{n+1}=\{1,\ldots,n+1\}$ be a finite set.
    A positive \textit{probability distribution} $p$ on $\Omega_{n+1}$ is a map $p:\Omega_{n+1}\to(0,1)$ such that
    \begin{equation}
        \sum_{\omega=1}^{n+1}p(\omega) = 1.
    \end{equation}
    Denote by $\Delta^n$ the set of all the positive probability distributions on $\Omega_{n+1}$.
    The set $\Delta^n$ is a smooth manifold with an atlas consisting of a single coordinate system $(\eta^1,\ldots,\eta^n)$ defined by $(\eta^1(p),\ldots,\eta^n(p))=(p(1),\ldots,p(n))$.
    The \textit{Fisher metric} $g^F$ and the \textit{exponential connection} $\nabla^{(\mathrm{e})}$ are defined by the following equations$:$
    \begin{align}
        g^F\left(\frac{\partial}{\partial \eta^i},\frac{\partial}{\partial \eta^j}\right) &= \sum_{\omega = 1}^{n+1}\left(\frac{\partial}{\partial \eta^i}\log (p(\omega))\cdot\frac{\partial}{\partial \eta^j}\log (p(\omega))\right)p(\omega)\\
        &= \frac{\delta_{ij}}{\eta^i} + \frac{1}{1-\sum_{l=1}^{n}\eta^l},\label{fishelem}\\[6pt]
        g^F\left(\nabla^{(\mathrm{e})}_{\frac{\partial}{\partial \eta^i}}\frac{\partial}{\partial \eta^j},\frac{\partial}{\partial \eta^k}\right)&=\sum_{\omega = 1}^{n+1}\left(\frac{\partial}{\partial \eta^i}\frac{\partial}{\partial \eta^j}\log (p(\omega))\cdot\frac{\partial}{\partial \eta^k}\log (p(\omega))\right)p(\omega)\\
        &= -\frac{\delta_{ij}\delta_{jk}}{\eta^i} + \frac{1}{\left(1-\sum_{l=1}^{n}\eta^l\right)^2},
    \end{align}
    where $\delta_{ij}$ denotes the Kronecker delta.
    The triplet $(\Delta^n,g^F,\nabla^{(\mathrm{e})})$ is a statistical manifold, known as the \textit{probability simplex}.
    The conjugate connection of $(\Delta^n,g^F,\nabla^{(\mathrm{e})})$ is called the \textit{mixture connection}, often denoted by $\nabla^{(\mathrm{m})}$.\\
\end{example}

\begin{definition}
    \label{alphacon}
    The $\alpha$-\textit{connections} $\nabla^{(\alpha)}$ on a statistical manifold $(M,g,\nabla)$ is a family of affine connections defined by
    \begin{equation}
        \nabla^{(\alpha)}_X Y = \frac{1+\alpha}{2}\nabla_X Y + \frac{1-\alpha}{2}\nabla^{*}_X Y,\quad X,Y\in\Gamma(TM)
    \end{equation}
    for each $\alpha\in\mathbb{R}$.
    By definition, $\nabla^{(1)}=\nabla$ and $\nabla^{(-1)}=\nabla^{*}$ hold.
    Moreover, we have $\nabla^{(-\alpha)}=(\nabla^{(\alpha)})^{*}$.\\
\end{definition}

\newpage

\begin{remark}
    In Definition \ref{alphacon}, the triplet $(M,g,\nabla^{(\alpha)})$ is also a statistical manifold for each $\alpha\in\mathbb{R}$.\\
\end{remark}

The difference tensor $K\in\Gamma(T^{(1,2)}M)$ of a statistical manifold $(M,g,\nabla)$ is defined by 
\begin{align}
    \label{difftendef}
    K_X Y = K(X,Y) &= \nabla_X Y - \nabla^g_X Y,\quad X,Y\in\Gamma(TM).
\end{align}
The conditions $\nabla g = 0$ and $K=0$ are equivalent. 
The difference tensor $K^{(\alpha)}$ of $(M,g,\nabla^{(\alpha)})$ for each $\alpha\in\mathbb{R}$ is given by $K^{(\alpha)}=\alpha K$.\\

\subsection{Curvature tensor fields on the statistical manifold}

For an affine connection $\nabla$ on $M$, we define the \textit{curvature tensor field} $R^\nabla \in \Gamma(T^{(1,3)}M)$ of $\nabla$ by
\begin{equation*}
    R^\nabla (X,Y)Z=\nabla_X \nabla_Y Z-\nabla_Y \nabla_X Z-\nabla_{[X,Y]}Z,
\quad X, Y, Z \in \Gamma(TM). 
\end{equation*}
On a statistical manifold $(M,g,\nabla)$, we often denote $R^{\nabla}$ by $R$, 
$R^{\nabla^g}$ by $R^g$, and $R^{\nabla^{*}}$ by $R^{*}$, and we denote the curvature tensor field of $\nabla^{(\alpha)}$ by $R^{(\alpha)}$. 
For $X,Y,Z,W\in\Gamma(TM)$, the curvature tensor fields are related as follows:
\begin{eqnarray}
    &&
    g(R(X,Y)Z,W) = -g(Z,R^*(X,Y)W), \label{intcurvature}\\
    &&
    R(X,Y)Z = R^g(X,Y)Z + (\nabla^g_X K)(Y,Z) - (\nabla^g_Y K)(X,Z) + \lbrack K_X,K_Y \rbrack Z, \label{Levicurvandstat}\\
    &&
    R(X,Y)Z + R^*(X,Y)Z = 2R^g(X,Y)Z + 2[K_X,K_Y]Z.\label{addcurv}
\end{eqnarray}

\begin{definition}
    A statistical manifold $(M,g,\nabla)$ is said to be \textit{conjugate symmetric} if $R=R^*$ holds.\\
\end{definition}

\begin{remark}
    The conjugate symmetry of $(M,g,\nabla)$ is equivalent to each of the following conditions$:$
    \renewcommand{\labelenumi}{$\operatorname{(\theenumi)}$}
    \begin{enumerate}
        \item $g(R(X,Y)Z,W) = -g(Z,R(X,Y)W)$ for all $X,Y,Z,W\in\Gamma(TM)$.
        \item $\nabla^gK$ is totally symmetric on $M$.\\
    \end{enumerate}
\end{remark}

\begin{definition}
    A statistical manifold $(M,g,\nabla)$ is said to have \textit{constant curvature} $k$ if the following equation holds for some real number $k$$:$
    \begin{equation}
        \label{constcurv}
        R(X,Y)Z=k(g(Y,Z)X-g(X,Z)Y),\quad X,Y,Z\in\Gamma(TM).
    \end{equation}
\end{definition}
If a statistical manifold $(M,g,\nabla)$ has constant curvature, then the statistical manifold $(M,g,\nabla^{*})$ also has constant curvature.
It is easy to see that statistical manifold of constant curvature is also conjugate symmetric.
In fact, the following proposition is known \cite{MR4452143}.\\

\begin{proposition}
    \label{constcon}
    Let $(M,g,\nabla)$ be a statistical manifold. The following conditions are equivalent$:$
    \renewcommand{\labelenumi}{$\operatorname{(\theenumi)}$}
    \begin{enumerate}
        \item The statistical manifold $(M,g,\nabla)$ has constant curvature.
        \item The statistical manifold $(M,g,\nabla)$ is conjugate symmetric and $\nabla$ is projectively flat.\\
    \end{enumerate}
\end{proposition}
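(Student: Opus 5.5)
The plan is to prove the two implications separately. Since $m\geq 2$ throughout the paper, the classical Weyl characterization of projective flatness applies in the form we need. A torsion-free connection on a manifold of dimension at least $3$ is projectively flat if and only if its projective Weyl tensor vanishes. In dimension $2$ the Weyl tensor vanishes automatically, and projective flatness is instead equivalent to the Cotton-type condition that the Ricci tensor satisfies $(\nabla_X \mathrm{Ric})(Y,Z)$ symmetric in $X,Y$ once the skew part of $\mathrm{Ric}$ has been accounted for. For statistical connections, $\mathrm{Ric}$ is symmetric exactly when the volume form of $g$ is $\nabla$-parallel up to the usual trace condition. I will use this characterization together with the curvature identities \eqref{intcurvature}--\eqref{addcurv}.

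For (1)$\Rightarrow$(2), assume \eqref{constcurv}. Then $g(R(X,Y)Z,W)=k(g(Y,Z)g(X,W)-g(X,Z)g(Y,W))$, which is skew in $(Z,W)$. By \eqref{intcurvature} this gives $g(Z,R^*(X,Y)W)=g(Z,R(X,Y)W)$, so $R=R^*$ and the manifold is conjugate symmetric. For projective flatness, the Ricci tensor is $\mathrm{Ric}=(m-1)k\,g$, and substituting into the Weyl tensor formula makes it vanish identically. In dimension $2$ I instead check the Cotton condition. Using total symmetry of $\nabla g$ (the statistical condition), $(\nabla_X g)(Y,Z)$ is symmetric in $X,Y$, so the condition holds. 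Hence $\nabla$ is projectively flat.

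For (2)$\Rightarrow$(1), conjugate symmetry gives that $g(R(X,Y)Z,W)$ is skew in $(Z,W)$, by the first equivalent condition in the remark on conjugate symmetry. It is also skew in $(X,Y)$ and satisfies the first Bianchi identity, because $\nabla$ is torsion-free. So $R$, lowered with $g$, is an algebraic curvature tensor, and in particular it has the pair symmetry; this forces $\mathrm{Ric}$ to be symmetric. Projective flatness with symmetric Ricci gives, when $m\geq3$,
\[
R(X,Y)Z=\tfrac{1}{m-1}\bigl(\mathrm{Ric}(Y,Z)X-\mathrm{Ric}(X,Z)Y\bigr).
\]
Lowering this with $W$ and imposing skewness in $(Z,W)$ gives
\[
\mathrm{Ric}(Y,Z)g(X,W)-\mathrm{Ric}(X,Z)g(Y,W)=-\mathrm{Ric}(Y,W)g(X,Z)+\mathrm{Ric}(X,W)g(Y,Z).
\]
Contracting over $X$ and $W$ yields $m\,\mathrm{Ric}-\mathrm{Ric}=-\mathrm{Ric}+(\operatorname{tr}_g\mathrm{Ric})\,g$, so $\mathrm{Ric}=f g$ with $f=\operatorname{tr}_g\mathrm{Ric}/m$. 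For $m=2$, an algebraic curvature tensor automatically has this form, $R(X,Y)Z=f\,(g(Y,Z)X-g(X,Z)Y)$. In both cases $R$ now has the form \eqref{constcurv} with a function $f$ in place of $k$.

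The main obstacle is showing that $f$ is constant. The tool is the second Bianchi identity for $\nabla$, which is torsion-free:
\[
\mathfrak{S}_{X,Y,Z}(\nabla_X R)(Y,Z)W=0.
\]
Substituting $R=f\,(g\wedge\mathrm{id})$ produces two kinds of terms. Differentiating $f$ gives $\mathfrak{S}\,(Xf)\,\bigl(g(Z,W)Y-g(Y,W)Z\bigr)$. Differentiating $g$ gives $f\,\mathfrak{S}\,\bigl((\nabla_Xg)(Z,W)Y-(\nabla_Xg)(Y,W)Z\bigr)$, and this vanishes after cyclic summation because $\nabla g$ is totally symmetric. The surviving identity is $\mathfrak{S}\,(Xf)\bigl(g(Z,W)Y-g(Y,W)Z\bigr)=0$. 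When $m\geq3$, contracting it as in Schur's lemma gives $(m-2)\,df=0$, so $df=0$. For $m=2$ this contraction is empty, and I would instead obtain constancy from projective flatness itself: the Cotton condition applied to $\mathrm{Ric}=f g$ with $\nabla g$ symmetric reduces to $df\wedge g$-type terms vanishing, which again forces $df=0$. Since $M$ is connected, $f$ is a constant $k$, and $(M,g,\nabla)$ has constant curvature.
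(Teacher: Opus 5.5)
Your proof is correct. Note that the paper does not prove this proposition: it quotes it from Kobayashi--Ohno \cite{MR4452143}. So your argument replaces a citation rather than varying an argument in the text.

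What your route buys is clarity about which hypothesis does what:
\begin{itemize}
\item Conjugate symmetry, via \eqref{intcurvature}, makes $g(R(X,Y)Z,W)$ an algebraic curvature tensor. This gives a symmetric $\mathrm{Ric}$, so you only need the symmetric-Ricci form of the Weyl criterion.
\item Projective flatness then forces $R=f\,(g(Y,Z)X-g(X,Z)Y)$; when $m=2$ this form is automatic.
\item Total symmetry of $\nabla g$ removes the $\nabla g$-terms in two places. For $m\geq 3$ this is the second Bianchi identity. For $m=2$ it is the Codazzi condition on $\mathrm{Ric}=fg$, which reduces to $(Xf)g(Y,Z)=(Yf)g(X,Z)$ and, after contracting, to $(m-1)\,df=0$.
\end{itemize}
Your proof also shows that for $m\geq 3$ projective flatness is used only to get $\mathrm{Ric}\propto g$, while for $m=2$ it is exactly what makes $f$ constant.

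The cost is reliance on the classical Weyl--Cotton characterization of projective flatness (as in Nomizu--Sasaki). You should state it precisely, and only in its symmetric-Ricci form: for $m\geq 3$, projective flatness is equivalent to $R(X,Y)Z=\frac{1}{m-1}(\mathrm{Ric}(Y,Z)X-\mathrm{Ric}(X,Z)Y)$. For $m=2$, it is equivalent to $\nabla\mathrm{Ric}$ being Codazzi. The vaguer ``once the skew part has been accounted for'' version is never needed.

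A few minor fixes:
\begin{itemize}
\item For $m\geq 3$ the curvature function is $f/(m-1)$, not $f$.
\item The Schur contraction actually gives $(m-1)(m-2)\,df=0$.
\item Your aside about volume forms is imprecise. The correct statement is that $\mathrm{Ric}$ is symmetric if and only if $\nabla$ is locally equiaffine, which for a statistical structure means $d(\operatorname{tr}K)=0$. You never use it, though: symmetry of $\mathrm{Ric}$ comes from conjugate symmetry in (2)$\Rightarrow$(1), and from $\mathrm{Ric}=(m-1)k\,g$ in (1)$\Rightarrow$(2). You can simply delete the aside.
\end{itemize}
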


If there is a $k^{(\alpha)}\in\mathbb{R}$ such that $R^{(\alpha)}$ satisfies the equation \eqref{constcurv} for each $\alpha\in\mathbb{R}$, then we say that $(M,g,\nabla^{(\alpha)})$ \textit{has constant curvature for each} $\alpha$.
In this case, $k^{(\alpha)}$ can be determined by
\begin{align}
    k^{(\alpha)} = \alpha^2k^{(1)} + (1-\alpha^2)k^{(0)}.
\end{align}
This equation follows from the conjugate symmetry of $(M,g,\nabla)$ and the identity
\begin{align}
    R^{(\alpha)}(X,Y)Z &= R^g(X,Y)Z + \alpha^2[K_X,K_Y]Z\\
    &= \alpha^2R(X,Y)Z + (1-\alpha^2)R^g(X,Y)Z,\quad X,Y,Z\in\Gamma(TM).
\end{align}
Computations of these formulas can be found in many papers, such as \cite{MR4452143,Zhang2007ANO}.\\

\begin{example}
    \label{probsimp2}
    The probability simplex $(\Delta^n,g^F,\nabla^{(\mathrm{e})})$ in Example \ref{probsimp} is a statistical manifold which $(\Delta^n,g^F,\nabla^{(\alpha)})$ is constant curvature for each $\alpha\in\mathbb{R}$.
    In fact, the curvature tensor field of $\nabla^{(\mathrm{e})}$ and $\nabla^{(\mathrm{m})}$ are zero.
    This can be proved since the coordinate system $(\eta^1,\ldots,\eta^n)$ is an affine coordinate system of $\nabla^{(\mathrm{m})}$, that is,
    \begin{align}
        \nabla^{(\mathrm{m})}_{\frac{\partial}{\partial \eta^i}}\frac{\partial}{\partial \eta^j} = 0
    \end{align}
    holds.
    On the other hand, the immersion $\Delta^n\ni p\to(2\sqrt{p(1)},\ldots,2\sqrt{p(n+1)})\in S^{n}(2)$ is an isometric embedding of $(\Delta^n,g^F)$ into the Euclidean sphere $(S^n(2),g_0)$ of radius $2$.
    Thus, $(\Delta^n,g^F)$ can be regarded as an open Riemannian submanifold of $(S^n(2),g_0)$, for details, see \cite{FURUHATA2011S86}.
    Therefore, the Riemannian manifold $(\Delta^n,g^F)$ has constant curvature $\frac{1}{4}$, and 
    \begin{align}
        R^{(\alpha)}(X,Y)Z = \frac{1-\alpha^2}{4}\left(g^F(Y,Z)X-g^F(X,Z)Y\right),\quad X,Y,Z\in\Gamma(T\Delta^n)
    \end{align}
    holds for each $\alpha\in\mathbb{R}$.\\
\end{example}

\subsection{Hessian manifolds and Hessian curvatures}

A statistical manifold $(M,g,\nabla)$ with flat connection $\nabla$ is called a \textit{Hessian manifold}.\\
\begin{definition}
    Let $(M,g,\nabla)$ be a Hessian manifold.
    If there exists a $c\in\mathbb{R}$ such that
    \begin{align}
        (\nabla_X K)(Y,Z) = -\frac{c}{2}\left(g(X,Y)Z+g(X,Z)Y\right),\quad X,Y,Z\in\Gamma(TM),\label{chc}
    \end{align}
    then $(M,g,\nabla)$ is said to have constant Hessian curvature of $c$.
    We abbreviate this by CHC $c$.\\
\end{definition}
As seen in Example \ref{probsimp2}, the probability simplex is a Hessian manifold.
In fact, it has CHC $-1$, see \cite{HF2013}.
The proof of the following proposition can be found in \cite{shima2007geometry} for example.\\

\begin{proposition}
    Let $(M,g,\nabla)$ be a Hessian manifold that has CHC $c$.
    Then the Riemannian manifold $(M,g)$ has constant curvature $-\frac{c}{4}$.\\
\end{proposition}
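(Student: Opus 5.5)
We compute $R^g$ from \eqref{Levicurvandstat}. Since $\nabla$ is flat, $R=0$, so
\begin{align}
    R^g(X,Y)Z = -(\nabla^g_X K)(Y,Z) + (\nabla^g_Y K)(X,Z) - [K_X,K_Y]Z.
\end{align}
The hypothesis \eqref{chc} is stated for $\nabla K$, not $\nabla^g K$, so the first step is to convert between the two. Using $\nabla = \nabla^g + K$, we have
\begin{align}
    (\nabla_X K)(Y,Z) = (\nabla^g_X K)(Y,Z) + K_X K_Y Z - K_{K_XY}Z - K_Y K_X Z.
\end{align}
Here we use that $K$ is symmetric, $K_XY=K_YX$, because both connections are torsion-free. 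Substituting and antisymmetrizing in $X,Y$, the terms $K_{K_XY}Z$ cancel by this symmetry. This gives
\begin{align}
    (\nabla^g_X K)(Y,Z) - (\nabla^g_Y K)(X,Z) = (\nabla_X K)(Y,Z) - (\nabla_Y K)(X,Z) - 2[K_X,K_Y]Z.
\end{align}
Therefore
\begin{align}
    R^g(X,Y)Z = -(\nabla_X K)(Y,Z) + (\nabla_Y K)(X,Z) + [K_X,K_Y]Z.
\end{align}

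Next we insert \eqref{chc}. The terms $g(X,Y)Z$ cancel under antisymmetrization, and what remains is
\begin{align}
    -(\nabla_X K)(Y,Z) + (\nabla_Y K)(X,Z) = \frac{c}{2}\big(g(X,Z)Y - g(Y,Z)X\big).
\end{align}
To handle $[K_X,K_Y]$, we use \eqref{addcurv} together with $R=0$. Since $R^*$ is conjugate to $R$ by \eqref{intcurvature}, we also have $R^*=0$, so \eqref{addcurv} yields $[K_X,K_Y]Z = -R^g(X,Y)Z$. Substituting this gives
\begin{align}
    2R^g(X,Y)Z = -\frac{c}{2}\big(g(Y,Z)X - g(X,Z)Y\big),
\end{align}
that is, $R^g(X,Y)Z = -\frac{c}{4}\big(g(Y,Z)X - g(X,Z)Y\big)$, which is constant curvature $-\frac{c}{4}$.

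The main obstacle is getting the signs right in the conversion between $\nabla K$ and $\nabla^g K$. I would check the final formula against the probability simplex, which has CHC $-1$ and Levi-Civita curvature $\frac14$; these are consistent with the result.
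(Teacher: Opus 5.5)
Your argument is correct and the signs check out: combining your formula for $R^g$ with $[K_X,K_Y]Z=-R^g(X,Y)Z$ gives $R^g(X,Y)Z=-\tfrac12\bigl((\nabla_XK)(Y,Z)-(\nabla_YK)(X,Z)\bigr)$, and \eqref{chc} substitutes directly into this to give curvature $-\tfrac{c}{4}$. The paper gives no proof of its own and defers to Shima, whose standard argument is this same identity (the Riemannian curvature as one half of the antisymmetrized Hessian curvature tensor) written in affine coordinates, so yours is essentially that route in invariant form.
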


\begin{example}
    \label{CHC0}
    Let $\mathbb{R}^+=\{y\in\mathbb{R}\mid y>0\}$, denote by $g_0$ the Euclidean metric restricted to $(\mathbb{R}^+)^n$, and let s$D$ be a torsion-free affine connection on $(\mathbb{R}^+)^n$ defined by
    \begin{align*}
        D_{\frac{\partial}{\partial y^i}}\frac{\partial}{\partial y^j} &= -\frac{\delta_{ij}}{y^i}\frac{\partial}{\partial y^i}.
    \end{align*}
    The triplet $((\mathbb{R}^+)^n,g_0,D)$ is a Hessian manifold of CHC $0$ \cite{HF2013}.\\
\end{example}


\section{Doubly totally-umbilical submanifolds of statistical manifolds}
\label{dtusec}
We fix the notation and terminology used throughout this paper, most of which are borrowed from \cite{Furuhata2024Toward}.
Denote by $(\widetilde{M},\widetilde{g},\widetilde{\nabla})$ a statistical manifold of dimension $n=m+p$.
Given an immersion $\iota:M\to\widetilde{M}$, it induces a statistical structure $(g,\nabla)$ on $M$ by the following: 
\begin{align}
    g=\iota^*\widetilde{g},\quad g\left(\nabla_X Y,Z\right)=\widetilde{g}\left(\widetilde{\nabla}_X \iota_*Y,\iota_*Z\right),\quad X,Y,Z\in\Gamma(TM).\label{induce}
\end{align}
Here, the connection on $\iota^{*}T\widetilde{M}$ induced by $\widetilde{\nabla}$ is also denoted by $\widetilde{\nabla}$.\\

\begin{definition}
    Let $(M,g,\nabla),(\widetilde{M},\widetilde{g},\widetilde{\nabla})$ be statistical manifolds.
    If there exists an immersion $\iota:M\to\widetilde{M}$ such that the equation \eqref{induce} holds, then $\iota:(M,g,\nabla)\to(\widetilde{M},\widetilde{g},\widetilde{\nabla})$ is called a \textit{statistical immersion}, and $(M,g,\nabla)$ is called a \textit{statistical submanifold} of $(\widetilde{M},\widetilde{g},\widetilde{\nabla})$.\\
\end{definition}

\begin{example}
    \label{probsimpemb}
    Consider the Hessian manifold $(((\mathbb{R}^+)^{n+1},g_0,D))$ of CHC 0 in Example \ref{CHC0}.
    The probability simplex is embedded into $(\mathbb{R}^+)^{n+1}$ by $\iota:\Delta^n\ni p\to(2\sqrt{p(1)},\ldots,2\sqrt{p(n+1)})\in(\mathbb{R}^+)^{n+1}$.
    Define a coordinate system $(\eta^1,\ldots,\eta^{n+1})$ on $(\mathbb{R}^+)^{n+1}$ by $\eta^i=\frac{(y^i)^2}{4},\,i\in\{1,\ldots,n+1\}$.
    If we denote by $D^*$ the conjugate connection of $((\mathbb{R}^+)^{n+1},g_0,D)$, then $(\eta^1,\ldots,\eta^{n+1})$ is an affine coordinate system of $D^*$, and we have
    \begin{align}
        \label{denorprobsimp}
        g_0\left(\frac{\partial}{\partial \eta^i},\frac{\partial}{\partial \eta^j}\right) &= \frac{\delta_{ij}}{\eta^i},\\
        D_{\frac{\partial}{\partial \eta^i}}\frac{\partial}{\partial \eta^j} =& -\frac{\delta_{ij}}{\eta^i}\frac{\partial}{\partial \eta^i}. 
    \end{align}
    With this coordinate system and the coordinate system $(\eta^1,\ldots,\eta^n)$ on $\Delta^n$ defined in Example \ref{probsimp}, the embedding $\iota:\Delta^n\to(\mathbb{R}^+)^{n+1}$ is expressed by $\iota(\eta^1,\ldots,\eta^n)=(\eta^1,\ldots,\eta^n,1-\sum_{i=1}^n\eta^i)$.
    It is easy to see that the statistical structure $(g^F,\nabla^{(\mathrm{e})})$ on $\Delta^n$ is the one induced by $(g_0,D)$, thus the probability simplex $(\Delta^n,g^F,\nabla^{(\mathrm{e})})$ is a statistical submanifold of $((\mathbb{R}^+)^{n+1},g_0,D)$.
    Following \cite{AmariNagaoka2000, Fujiwara2026Companion}, the statistical manifold $((\mathbb{R}^+)^{n+1},g_0,D)$ is called the \textit{denormalized state space}.\\
\end{example}

\begin{example}
    For an embedding $\iota:\Delta^m\to\Delta^n$ where $m\leq n$, suppose there is a family of non-empty subsets $\{C_1,\ldots,C_{m+1}\}\subset\Omega_{n+1}$ such that 
    \begin{align*}
        \Omega_{m+1} = \bigsqcup_{l = 1}^{m+1} C_l
    \end{align*}
    is a disjoint union.
    The embedding $\iota$ is called a \textit{Markov embedding} if there exist functions $Q_l:\Omega_{n+1}\to[0,\infty)$ whose support is contained in $C_l$ for each $l\in\Omega_{m+1}$ such that
    \begin{align}
        \label{markovemb}
        \iota(p) = \sum_{l = 1}^{m+1}p(l)Q_l,\quad p\in\Delta^{m}.
    \end{align}
    With the statistical structure $(g^F,\nabla^{(\mathrm{e})})$ on the probability simplex in Example \ref{probsimp}, Markov embeddings $\iota:(\Delta^m,g^F,\nabla^{(\mathrm{e})})\to(\Delta^n,g^F,\nabla^{(\mathrm{e})})$ are statistical immersions.
    In fact, it is known that the scalar multiples of the Fisher metric, the $\alpha$-connections of $(g^F,\nabla^{\mathrm{(e)}})$ are the only $(0,2)$-type tensor field, affine connections, respectively, such that it preserves any Markov embeddings \cite{cencov2000statistical}.\\
\end{example}

\begin{definition}
    If a statistical immersion $f:(M,g,\nabla)\to(\widetilde{M},\widetilde{g},\widetilde{\nabla})$ is a diffeomorphism, then $f$ is called a \textit{statistical diffeomorphism}.\\
\end{definition}

Given a statistical immersion $\iota:(M,g,\nabla)\to(\widetilde{M},\widetilde{g},\widetilde{\nabla})$, we decompose the vector bundle $\iota^*T\widetilde{M}$ with respect to $\widetilde{g}$ by
\begin{align}
    \iota^*T\widetilde{M} = \iota_*TM\oplus TM^{\perp}.
\end{align}
From this decomposition, we define $B\in\Gamma(TM^{\perp}\otimes T^{(0,2)}M)$, $A\in\Gamma(T^{(1,1)}M\otimes(TM^{\perp})^*)$, and a connection $\nabla^{\perp}$ on $TM^{\perp}$ by the following formulas:
\begin{align}
    \widetilde{\nabla}_X&\iota_{*} Y = \iota_*(\nabla_XY) + B(X,Y),\label{Gauss}\\
    \widetilde{\nabla}_X\xi &= -\iota_*A_{\xi}X + \nabla^{\perp}_X\xi,\quad X,Y\in\Gamma(TM),\,\xi\in\Gamma(TM^{\perp}).\label{Wein}
\end{align}
We call $B$ the second fundamental form, $A$ the shape operator, and $\nabla^{\perp}$ the normal connection of $\iota$, all with respect to $\widetilde{\nabla}$.
For each $\alpha\in\mathbb{R}$, it is easy to see that $(M,g,\nabla^{(\alpha)})$ is a statistical submanifold of $(\widetilde{M},\widetilde{g},\widetilde{\nabla}^{(\alpha)})$ since the equation \eqref{induce} holds.
Thus, we define $B^{(\alpha)}$, $A^{(\alpha)}$, and $\nabla^{\perp(\alpha)}$ for $\iota$ by \eqref{Gauss} and \eqref{Wein} with respect to $\widetilde{\nabla}^{(\alpha)}$ for each $\alpha\in\mathbb{R}$.\\

\begin{remark}
    Let $\iota:(M,g,\nabla)\to(\widetilde{M},\widetilde{g},\widetilde{\nabla})$ be a statistical immersion.
    For each $\alpha\in\mathbb{R}$, the following equations hold$:$
    \begin{align}
        B^{(0)} &= \frac{B^{(\alpha)}+B^{(-\alpha)}}{2},\\
        A^{(0)} &= \frac{A^{(\alpha)}+A^{(-\alpha)}}{2}.
    \end{align}
\end{remark}

The following Propositions \ref{basic1} and \ref{basic2} are obtained by simple computations (see \cite{FuruhataHasegawa2016} for example).\\
\begin{proposition}
    \label{basic1}
    Let $\iota:(M,g,\nabla)\to(\widetilde{M},\widetilde{g},\widetilde{\nabla})$ be a statistical immersion.
    For each $\alpha\in\mathbb{R}$, we have
    \begin{align}
        \widetilde{g}(B^{(\alpha)}(X,Y),\xi) &= g(A^{(-\alpha)}_{\xi}X,Y),\\
        X\widetilde{g}(\xi,\eta) = \widetilde{g}(\nabla^{\perp(\alpha)}_X\xi,&\eta) + \widetilde{g}(\xi,\nabla^{\perp(-\alpha)}_X\eta),
    \end{align}
    where $X,Y\in\Gamma(TM)$ and $\xi,\eta\in\Gamma(TM^{\perp})$.\\
\end{proposition}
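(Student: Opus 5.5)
Both identities follow from decomposing the ambient conjugacy relation along the splitting $\iota^*T\widetilde{M}=\iota_*TM\oplus TM^{\perp}$ and comparing tangential and normal components. The first preliminary step is to record that the pair $(\widetilde{\nabla}^{(\alpha)},\widetilde{\nabla}^{(-\alpha)})$ is conjugate with respect to $\widetilde{g}$, which is the statement $\nabla^{(-\alpha)}=(\nabla^{(\alpha)})^{*}$ in Definition \ref{alphacon} applied on $\widetilde{M}$. Pulled back along $\iota$, this gives for sections $V,W$ of $\iota^*T\widetilde{M}$ and $X\in\Gamma(TM)$
\begin{align}
    X\widetilde{g}(V,W)=\widetilde{g}\bigl(\widetilde{\nabla}^{(\alpha)}_XV,W\bigr)+\widetilde{g}\bigl(V,\widetilde{\nabla}^{(-\alpha)}_XW\bigr).
\end{align}
The pullback is legitimate because the identity is tensorial in $X$ and holds pointwise for ambient extensions.

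For the first identity, take $V=\iota_*Y$ and $W=\xi$, so that $\widetilde{g}(\iota_*Y,\xi)=0$ identically and the left-hand side vanishes. Substitute the Gauss formula \eqref{Gauss} for $\widetilde{\nabla}^{(\alpha)}$ in the first term and the Weingarten formula \eqref{Wein} for $\widetilde{\nabla}^{(-\alpha)}$ in the second. Orthogonality then kills the mixed terms, leaving
\begin{align}
    0=\widetilde{g}\bigl(B^{(\alpha)}(X,Y),\xi\bigr)-g\bigl(Y,A^{(-\alpha)}_{\xi}X\bigr).
\end{align}
This is the first claim.

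For the second identity, take $V=\xi$ and $W=\eta$, both normal, and again expand both covariant derivatives with \eqref{Wein}. The tangential parts $-\iota_*A^{(\alpha)}_\xi X$ and $-\iota_*A^{(-\alpha)}_\eta X$ are orthogonal to the normal fields they are paired with, so they drop out. Only the normal-connection terms survive, which gives the stated compatibility relation between $\nabla^{\perp(\alpha)}$ and $\nabla^{\perp(-\alpha)}$.

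There is no real obstacle here. The only point needing care is the preliminary step: checking that $\widetilde{\nabla}^{(\pm\alpha)}$ are mutually conjugate, which reduces to the linear combination of $\widetilde{\nabla}$ and $\widetilde{\nabla}^{*}$ together with $\frac{1+\alpha}{2}+\frac{1-\alpha}{2}=1$. One should also confirm that the induced objects $B^{(\alpha)}$, $A^{(\alpha)}$ and $\nabla^{\perp(\alpha)}$ are exactly those defined by \eqref{Gauss} and \eqref{Wein} for $\widetilde{\nabla}^{(\alpha)}$.
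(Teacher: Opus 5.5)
Your argument is correct and is the ``simple computation'' the paper defers to (citing Furuhata--Hasegawa) without writing it out: pull back the conjugacy of $\widetilde{\nabla}^{(\alpha)}$ and $\widetilde{\nabla}^{(-\alpha)}$ along $\iota$, insert the Gauss and Weingarten formulas, and use the orthogonality of $\iota^*T\widetilde{M}=\iota_*TM\oplus TM^{\perp}$. The signs in your first identity and your conjugacy check via $\frac{1+\alpha}{2}+\frac{1-\alpha}{2}=1$ are both right.
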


\begin{proposition}
    \label{basic2}
    Let $\iota:(M,g,\nabla)\to(\widetilde{M},\widetilde{g},\widetilde{\nabla})$ be a statistical immersion, and $\widetilde{R},R$ be the curvature tensor fields of $\widetilde{\nabla},\nabla$, respectively.
    The following equations hold for $X,Y,Z,W\in\Gamma(TM)$ and $\xi,\eta\in\Gamma(TM^{\perp})$$:$
    \begin{align}
        \widetilde{g}\left(\widetilde{R}(\iota_*X,\iota_*Y)\iota_*Z,\iota_*W\right) &= g\Big(R(X,Y)Z - A_{B(Y,Z)}X + A_{B(X,Z)}Y,W\Big),\\
        \widetilde{g}\left(\widetilde{R}(\iota_*X,\iota_*Y)\iota_*Z,\xi\right) &= \widetilde{g}\Big(\left(\nabla_X B\right)(Y,Z) - \left(\nabla_Y B\right)(X,Z),\xi\Big),\\
        \widetilde{g}\left(\widetilde{R}(\iota_*X,\iota_*Y)\xi,\iota_*Z\right) &= g\Big(\left(\nabla_Y A\right)(X,\xi) - \left(\nabla_X A\right)(\xi,Y),Z\Big),\\
        \widetilde{g}\left(\widetilde{R}(\iota_*X,\iota_*Y)\xi,\eta\right) &= \widetilde{g}\Big(R^{\nabla^{\perp}}(X,Y)\xi-B(X,A_{\xi}Y)+B(Y,A_{\xi}X),\eta\Big).    \end{align}
    Here, 
    \begin{align}
        (\nabla_X B)(Y,Z) &= \nabla^{\perp}_XB(Y,Z) - B(\nabla_X Y,Z) - B(Y,\nabla_X Z),\\
        (\nabla_X A)(\xi,Y) &= \nabla_X A_{\xi}Y - A_{\nabla^{\perp}_X\xi}Y - A_{\xi}\nabla_X Y,
    \end{align}
    where $X,Y\in\Gamma(TM)$ and $\xi\in\Gamma(TM^{\perp})$, and $R^{\nabla^{\perp}}$ is the curvature tensor field of $\nabla^{\perp}$.\\
\end{proposition}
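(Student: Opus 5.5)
The statement to prove is Proposition \ref{basic2}, the Gauss, Codazzi, Ricci-type decomposition of $\widetilde{g}(\widetilde{R}(\iota_*X,\iota_*Y)\cdot,\cdot)$ for a statistical immersion. The plan is to compute $\widetilde{R}(\iota_*X,\iota_*Y)\iota_*Z$ and $\widetilde{R}(\iota_*X,\iota_*Y)\xi$ directly from \eqref{Gauss} and \eqref{Wein}, split each into tangential and normal parts, and pair with $\iota_*W$ or with a normal section $\eta$. Since the curvature of the pullback connection on $\iota^*T\widetilde{M}$ is the pullback of $\widetilde{R}$, it is legitimate to write
\[
\widetilde{R}(\iota_*X,\iota_*Y)V=\widetilde{\nabla}_X\widetilde{\nabla}_YV-\widetilde{\nabla}_Y\widetilde{\nabla}_XV-\widetilde{\nabla}_{[X,Y]}V
\]
for sections $V$ of $\iota^*T\widetilde{M}$.

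For $V=\iota_*Z$, apply \eqref{Gauss} once to get $\widetilde{\nabla}_Y\iota_*Z=\iota_*\nabla_YZ+B(Y,Z)$. Then differentiate again, using \eqref{Gauss} on the first term and \eqref{Wein} on the normal term $B(Y,Z)$:
\[
\widetilde{\nabla}_X\widetilde{\nabla}_Y\iota_*Z=\iota_*\bigl(\nabla_X\nabla_YZ-A_{B(Y,Z)}X\bigr)+B(X,\nabla_YZ)+\nabla^\perp_XB(Y,Z).
\]
Antisymmetrize in $X,Y$ and subtract the $[X,Y]$ term.
\begin{itemize}
\item The tangential part is $R(X,Y)Z-A_{B(Y,Z)}X+A_{B(X,Z)}Y$. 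Pairing with $\iota_*W$ gives the first (Gauss) equation, because $\widetilde{g}(\iota_*U,\iota_*W)=g(U,W)$.
\item The normal part is $\nabla^\perp_XB(Y,Z)-\nabla^\perp_YB(X,Z)+B(X,\nabla_YZ)-B(Y,\nabla_XZ)-B([X,Y],Z)$.
\end{itemize}
To put the normal part in Codazzi form, use torsion-freeness of $\nabla$ to write $[X,Y]=\nabla_XY-\nabla_YX$, and the symmetry of $B$ (which follows from torsion-freeness of $\widetilde{\nabla}$ and $\nabla$). The term $B(\nabla_XY,Z)-B(\nabla_YX,Z)$ then completes the expression to $(\nabla_XB)(Y,Z)-(\nabla_YB)(X,Z)$, with $(\nabla_XB)$ as defined in the statement. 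Pairing with $\xi$ kills the tangential part and gives the second equation.

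For $V=\xi$ normal, proceed the same way. First $\widetilde{\nabla}_Y\xi=-\iota_*A_\xi Y+\nabla^\perp_Y\xi$. Differentiating again gives
\[
\widetilde{\nabla}_X\widetilde{\nabla}_Y\xi=-\iota_*\nabla_XA_\xi Y-B(X,A_\xi Y)-\iota_*A_{\nabla^\perp_Y\xi}X+\nabla^\perp_X\nabla^\perp_Y\xi.
\]
Antisymmetrize and subtract $\widetilde{\nabla}_{[X,Y]}\xi=-\iota_*A_\xi[X,Y]+\nabla^\perp_{[X,Y]}\xi$.
\begin{itemize}
\item The normal part is $R^{\nabla^\perp}(X,Y)\xi-B(X,A_\xi Y)+B(Y,A_\xi X)$. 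Pairing with $\eta$ gives the fourth equation.
\item The tangential part is $-\nabla_XA_\xi Y+\nabla_YA_\xi X-A_{\nabla^\perp_Y\xi}X+A_{\nabla^\perp_X\xi}Y+A_\xi(\nabla_XY-\nabla_YX)$.
\end{itemize}
Grouping terms according to the stated definition of $(\nabla A)$, the tangential part regroups as $(\nabla_YA)(\xi,X)-(\nabla_XA)(\xi,Y)$. The statement writes the first of these as $(\nabla_YA)(X,\xi)$, which I read as the same tensor with its arguments listed in the other order.

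The only delicate step is this bookkeeping of the $[X,Y]$ terms. It relies on two facts. The symmetry $B(X,Y)=B(Y,X)$ follows from $\widetilde{\nabla}_X\iota_*Y-\widetilde{\nabla}_Y\iota_*X=\iota_*[X,Y]$. The torsion-freeness of the induced $\nabla$ is the tangential part of the same identity. No metric compatibility is needed anywhere: each equation is obtained simply by pairing an exact bundle identity with $\iota_*W$, $\xi$, $\iota_*Z$ or $\eta$, using only that $\iota_*TM\perp TM^\perp$. Proposition \ref{basic1} is therefore not required here, and the dual relations only matter if one wants to rewrite these identities with $A^{(-1)}$.
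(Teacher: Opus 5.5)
Your proof is correct and is the standard direct derivation: differentiate the Gauss formula \eqref{Gauss} and the Weingarten formula \eqref{Wein} twice, antisymmetrize in $X,Y$, and split into tangential and normal parts. That is the ``simple computation'' the paper defers to Furuhata--Hasegawa, and you are right to read $(\nabla_Y A)(X,\xi)$ as $(\nabla_Y A)(\xi,X)$. One small refinement: in the $[X,Y]$ bookkeeping only the torsion-freeness of $\nabla$ is actually used, and the symmetry of $B$ is never needed.
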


The following important classes of statistical submanifolds were introduced in \cite{Furuhata2024Toward, Satoh2020StatisticalSubmanifolds}.
For a statistical immersion $\iota:(M,g,\nabla)\to(\widetilde{M},\widetilde{g},\widetilde{\nabla})$, we define the \textit{mean curvature tensor field} $H^{(\alpha)}\in\Gamma(TM^{\perp})$ with respect to $\widetilde{\nabla}^{(\alpha)}$ for each $\alpha\in\mathbb{R}$ by
\begin{align}
    H^{(\alpha)} = \frac{1}{m}\sum_{i=1}^mB^{(\alpha)}(e_i,e_i),
\end{align}
where $\{e_1,\ldots,e_m\}$ is an orthonormal frame of $(M,g)$.\\

\begin{definition}
    \label{defum}
    Let $\iota:(M,g,\nabla)\to(\widetilde{M},\widetilde{g},\widetilde{\nabla})$ be a statistical submanifold.
    \renewcommand{\labelenumi}{$\operatorname{(\theenumi)}$}
    \begin{enumerate}
        \item The immersed submanifold $M$ is said to be \textit{doubly autoparallel} if $B^{(1)}=B^{(-1)}=0$.\\
        \item The immersed submanifold $M$ is said to be \textit{doubly totally-umbilical} if $B^{(1)}=H^{(1)}\otimes g$ and $B^{(-1)}=H^{(-1)}\otimes g$ hold.\\
    \end{enumerate}
\end{definition}

\begin{remark}[\cite{Furuhata2024Toward}]
    \label{umrem}
    The following hold in Definition \ref{defum}.
    \renewcommand{\labelenumi}{$\operatorname{(\theenumi)}$}
    \begin{enumerate}
        \item If there is an $\alpha,\beta\in\mathbb{R}$ such that $\alpha\neq\beta$ and $B^{(\alpha)}=B^{(\beta)}=0$ hold, then the submanifold $M$ is doubly autoparallel.
        \item Similarly, if there is an $\alpha,\beta\in\mathbb{R}$ such that $\alpha\neq\beta$, $B^{(\alpha)}=H^{(\alpha)}\otimes g$ and $B^{(\beta)}=H^{(\beta)}\otimes g$ are satisfied, then the submanifold $M$ is doubly totally-umbilical.
        \item For each $\alpha\in\mathbb{R}$, the condition $B^{(\alpha)}=H^{(\alpha)}\otimes g$ is equivalent to 
        \begin{align}
            \label{umcon}
            A^{(-\alpha)}_{\xi}X = \widetilde{g}(H^{(\alpha)},\xi)X,\quad X\in\Gamma(TM),\,\xi\in\Gamma(TM^{\perp}).
        \end{align}
    \end{enumerate}
\end{remark}

\begin{remark}
    If the submanifold $M$ of $(\widetilde{M},\widetilde{g},\widetilde{\nabla})$ satisfies $B^{(\alpha)}=0$ for some $\alpha\in\mathbb{R}$, then $M$ is called a $\widetilde{\nabla}^{(\alpha)}$-\textit{autoparallel} submanifold.
    It is known that $M$ is a $\nabla^{(\mathrm{e})}$-autoparallel submanifold of $(\Delta^n,g^F,\nabla^{(\mathrm{e})})$ if and only if $M$ is an \textit{exponential family}, and $M$ is a $\nabla^{(\mathrm{m})}$-autoparallel submanifold of $(\Delta^n,g^F,\nabla^{(\mathrm{e})})$ if and only if $M$ is a \textit{mixture family} \cite{MR1800071}.\\
\end{remark}

\begin{example}
    \label{probsimpisdtu}
    The embedding $\iota:\Delta^n\to(\mathbb{R}^+)^{n+1}$ described in Example \ref{probsimpemb} shows that $\Delta^n$ is a doubly totally-umbilical submanifold of $((\mathbb{R}^+)^{n+1},g_0,D)$.
    See \cite{FURUHATA2011S86} for details.\\
\end{example}

\subsection{New results on doubly totally-umbilical submanifolds}
Even when a statistical immersion exists, properties of the ambient space such as conjugate symmetry and having constant curvature are not necessarily inherited by the submanifold.
However, if the submanifold is doubly totally-umbilical, these desirable properties are inherited.
We begin by computing the equations in Proposition~\ref{basic2} for doubly totally-umbilical submanifolds.\\

\begin{lemma}
    Let $\iota:(M,g,\nabla)\to(\widetilde{M},\widetilde{g},\widetilde{\nabla})$ be a statistical immersion and $\widetilde{R}^{(\alpha)},R^{(\alpha)}$ be the curvature tensor fields of $\widetilde{\nabla}^{(\alpha)},\nabla^{(\alpha)}$, respectively.
    If $M$ is doubly totally-umbilical, then the following equations hold for $X,Y,Z,W\in\Gamma(TM)$ and $\xi,\eta\in\Gamma(TM^{\perp})$$:$
    \begin{align}
        \widetilde{g}\left(\widetilde{R}^{(\alpha)}(\iota_*X,\iota_*Y)\iota_*Z,\iota_*W\right) =& g\Big(R^{(\alpha)}(X,Y)Z,W\Big)\\
        &- \widetilde{g}\left(H^{(\alpha)},H^{(-\alpha)}\right)\Big(g(Y,Z)g(X,W)-g(X,Z)g(Y,W)\Big),\label{umbeq1}\\
        \widetilde{g}\left(\widetilde{R}^{(\alpha)}(\iota_*X,\iota_*Y)\iota_*Z,\xi\right) &= \widetilde{g}\Big(g(Y,Z)\nabla^{\perp{(\alpha)}}_X H^{(\alpha)} - g(X,Z)\nabla^{\perp{(\alpha)}}_Y H^{(\alpha)},\xi\Big),\label{umbeq2}\\
        \widetilde{g}\left(\widetilde{R}^{(\alpha)}(\iota_*X,\iota_*Y)\xi,\iota_*Z\right) &= -\widetilde{g}\Big(g(Y,Z)\nabla^{\perp{(-\alpha)}}_X H^{(-\alpha)} - g(X,Z)\nabla^{\perp{(-\alpha)}}_Y H^{(-\alpha)},\xi\Big),\label{umbeq3}\\
        \widetilde{g}\left(\widetilde{R}^{(\alpha)}(\iota_*X,\iota_*Y)\xi,\eta\right) =& \widetilde{g}\Big(R^{\nabla^{\perp(\alpha)}}(X,Y)\xi,\eta\Big).\label{umbeq4}
    \end{align}
\end{lemma}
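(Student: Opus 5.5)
The plan is to apply Proposition~\ref{basic2} to the statistical immersion $\iota:(M,g,\nabla^{(\alpha)})\to(\widetilde{M},\widetilde{g},\widetilde{\nabla}^{(\alpha)})$ for each fixed $\alpha$. The quantities $B,A,\nabla^{\perp}$ in that proposition are then replaced by $B^{(\alpha)},A^{(\alpha)},\nabla^{\perp(\alpha)}$. Next I substitute the umbilicity data. Since $M$ is doubly totally-umbilical, Remark~\ref{umrem}(2) gives $B^{(\alpha)}=H^{(\alpha)}\otimes g$ for every $\alpha$; indeed $B^{(\alpha)}$ is affine in $\alpha$, being a combination of $B^{(1)}$ and $B^{(-1)}$. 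Applying Remark~\ref{umrem}(3) with $-\alpha$ in place of $\alpha$, I get $A^{(\alpha)}_\xi X=\widetilde{g}(H^{(-\alpha)},\xi)X$.

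For \eqref{umbeq1}, I compute
\[
A^{(\alpha)}_{B^{(\alpha)}(Y,Z)}X=g(Y,Z)\,\widetilde{g}(H^{(-\alpha)},H^{(\alpha)})\,X,
\]
and similarly with $X$ and $Y$ swapped. Pairing with $W$ gives exactly the term $-\widetilde{g}(H^{(\alpha)},H^{(-\alpha)})(g(Y,Z)g(X,W)-g(X,Z)g(Y,W))$.

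For \eqref{umbeq4}, I note that
\[
B^{(\alpha)}(X,A^{(\alpha)}_\xi Y)=\widetilde{g}(H^{(-\alpha)},\xi)\,g(X,Y)\,H^{(\alpha)}.
\]
This is symmetric in $X,Y$, so the two correction terms cancel.

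For \eqref{umbeq2}, I expand $(\nabla^{(\alpha)}_X B^{(\alpha)})(Y,Z)$ with $B^{(\alpha)}=g\,H^{(\alpha)}$:
\[
(\nabla^{(\alpha)}_X B^{(\alpha)})(Y,Z)=g(Y,Z)\nabla^{\perp(\alpha)}_X H^{(\alpha)}+(\nabla^{(\alpha)}_X g)(Y,Z)\,H^{(\alpha)}.
\]
Because $(M,g,\nabla^{(\alpha)})$ is statistical, $\nabla^{(\alpha)}g$ is totally symmetric. So the extra terms cancel in the antisymmetrization in $X,Y$, leaving \eqref{umbeq2}.

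For \eqref{umbeq3}, I have two options. The first is to use the identity \eqref{intcurvature} ambiently,
\[
\widetilde{g}(\widetilde{R}^{(\alpha)}(X,Y)\xi,Z)=-\widetilde{g}(\xi,\widetilde{R}^{(-\alpha)}(X,Y)Z),
\]
and then apply \eqref{umbeq2} with $-\alpha$. This immediately gives the stated formula with $H^{(-\alpha)}$ and $\nabla^{\perp(-\alpha)}$. The second is to compute directly from the third formula of Proposition~\ref{basic2}: $(\nabla_Y A)(X,\xi)$ involves $Y\widetilde{g}(H^{(-\alpha)},\xi)$, which is converted via Proposition~\ref{basic1} into $\widetilde{g}(\nabla^{\perp(-\alpha)}_Y H^{(-\alpha)},\xi)+\widetilde{g}(H^{(-\alpha)},\nabla^{\perp(\alpha)}_Y\xi)$. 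The second piece cancels against the $A_{\nabla^{\perp}\xi}$ term. I prefer the first route as cleaner, since \eqref{intcurvature} is valid on the statistical manifold $(\widetilde{M},\widetilde{g},\widetilde{\nabla}^{(\alpha)})$, whose conjugate is $\widetilde{\nabla}^{(-\alpha)}$.

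The main point needing care is the bookkeeping of $\pm\alpha$. This includes the duality $A^{(\alpha)}\leftrightarrow B^{(-\alpha)}$ in Proposition~\ref{basic1}, as well as confirming the umbilicity for general $\alpha$ via Remark~\ref{umrem}. It also requires checking that the $(\nabla g)$ terms in \eqref{umbeq2} cancel; this relies on total symmetry rather than on $\nabla g=0$.
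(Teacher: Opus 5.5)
Your proposal is correct, and for \eqref{umbeq1}, \eqref{umbeq2} and \eqref{umbeq4} it is exactly the paper's argument: Proposition~\ref{basic2} applied to $\widetilde{\nabla}^{(\alpha)}$, substitution of $B^{(\alpha)}=H^{(\alpha)}\otimes g$ and \eqref{umcon}, and the symmetry of $\nabla^{(\alpha)}g$. For \eqref{umbeq3} the paper takes your second option and computes $(\nabla^{(\alpha)}_X A^{(\alpha)})(\xi,Y)=\widetilde{g}(\nabla^{\perp(-\alpha)}_X H^{(-\alpha)},\xi)Y$ directly (its last displayed line writes $\nabla^{\perp(\alpha)}$ there, a slip that your use of Proposition~\ref{basic1} gets right), whereas your preferred route, deducing \eqref{umbeq3} from \eqref{umbeq2} at $-\alpha$ via \eqref{intcurvature}, is equally valid, shorter, and shows structurally why $-\alpha$ appears.
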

\begin{proof}
    We prove the equations using Proposition~\ref{basic2} one by one.
    For each $X,Y,Z,W\in\Gamma(TM)$, it holds that
    \begin{align}
        \widetilde{g}\left(\widetilde{R}^{(\alpha)}(\iota_*X,\iota_*Y)\iota_*Z,\iota_*W\right) =& g\Big(R^{(\alpha)}(X,Y)Z - g(Y,Z)A^{(\alpha)}_{H^{(\alpha)}}X + g(X,Z)A^{(\alpha)}_{H^{(\alpha)}}Y,W\Big)\\
        =& g\Big(R^{\alpha}(X,Y)Z,W\Big)\\
        &- \widetilde{g}\left(H^{(\alpha)},H^{(-\alpha)}\right)\Big(g(Y,Z)g(X,W)-g(X,Z)g(Y,W)\Big),
    \end{align}
    where we used \eqref{umcon} in the last equality.
    To prove equations \eqref{umbeq2} and \eqref{umbeq3}, it suffices to compute $\nabla^{(\alpha)} B^{(\alpha)}$ and $\nabla^{(\alpha)} A^{(\alpha)}$.
    For each $X,Y,Z\in\Gamma(TM)$ and $\xi\in\Gamma(TM^{\perp})$, we have
    \begin{align}
        \left(\nabla^{(\alpha)}_X B^{(\alpha)}\right)(Y,Z) =& \nabla^{\perp{(\alpha)}}_XB^{(\alpha)}(Y,Z) - B^{(\alpha)}\left(\nabla^{(\alpha)}_X Y,Z\right) - B^{(\alpha)}\left(Y,\nabla^{(\alpha)}_Y Z\right),\\
        =&Xg(Y,Z)H^{(\alpha)} + g(Y,Z)\nabla^{\perp{(\alpha)}}H^{(\alpha)}\\
        &-g\left(\nabla^{(\alpha)}_X Y,Z\right)H^{(\alpha)} - g\left(Y,\nabla^{(\alpha)}_X Z\right)H^{(\alpha)}\\
        =&\left(\nabla^{(\alpha)}_X g\right)(Y,Z)H^{(\alpha)} + g(Y,Z)\nabla^{\perp(\alpha)}_X H^{(\alpha)},
    \end{align}
    and from \eqref{umcon}, we have
    \begin{align}
        (\nabla^{(\alpha)}_X A^{(\alpha)})(\xi,Y) &= \nabla^{(\alpha)}_X A^{(\alpha)}_{\xi}Y - A^{(\alpha)}_{\nabla^{\perp^{(\alpha)}}_X\xi}Y - A^{(\alpha)}_{\xi}\nabla^{(\alpha)}_X Y\\
        &= \left(\nabla^{(\alpha)}_X\widetilde{g}\left(H^{-(\alpha)},\xi\right)Y\right) - \widetilde{g}\left(H^{(-\alpha)},\nabla^{\perp(\alpha)}_X \xi\right)Y - \widetilde{g}\left(H^{(-\alpha)},\xi\right)\nabla^{(\alpha)}_X Y\\
        &=X\widetilde{g}\left(H^{(-\alpha)},\xi\right)Y - \widetilde{g}\left(H^{(-\alpha)},\nabla^{\perp(\alpha)}_X \xi\right)Y\\
        &=\widetilde{g}\left(\nabla^{\perp(\alpha)}_X H^{(-\alpha)},\xi\right)Y.
    \end{align}
    Equation \eqref{umbeq2} is obtained since $\nabla g$ is symmetric.
    Lastly, the equation \eqref{umbeq4} is obtained by \eqref{umcon} and the symmetry of $B^{(\alpha)}$.
\end{proof}

\begin{proposition}
    \label{conjandcon}
    Let $\iota:(M,g,\nabla)\to(\widetilde{M},\widetilde{g},\widetilde{\nabla})$ be a statistical immersion, where $M$ is a doubly totally-umbilical submanifold.
    \renewcommand{\labelenumi}{$\operatorname{(\theenumi)}$}
    \begin{enumerate}
        \item If $(\widetilde{M},\widetilde{g},\widetilde{\nabla})$ is conjugate symmetric, then so is $(M,g,\nabla)$.
        \item If $(\widetilde{M},\widetilde{g},\widetilde{\nabla})$ has constant curvature $\widetilde{k}$, then $\widetilde{g}\left(H^{(1)},H^{(-1)}\right)$ is a constant function and $(M,g,\nabla)$ has constant curvature of $k=\widetilde{k}+\widetilde{g}\left(H^{(1)},H^{(-1)}\right)$.\\
    \end{enumerate}
\end{proposition}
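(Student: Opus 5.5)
The plan is to read both parts off the Gauss equation \eqref{umbeq1} of the preceding lemma, applied at $\alpha=1$ and $\alpha=-1$, and then to use the Codazzi-type equations \eqref{umbeq2}, \eqref{umbeq3} to get constancy of $\widetilde{g}(H^{(1)},H^{(-1)})$.

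\textbf{Part (1).} Write $h=\widetilde{g}(H^{(1)},H^{(-1)})$ and $Q(X,Y,Z,W)=g(Y,Z)g(X,W)-g(X,Z)g(Y,W)$. Since $\widetilde{g}$ is symmetric, the coefficient of $Q$ in \eqref{umbeq1} is the same function $h$ for $\alpha=1$ and $\alpha=-1$. Subtracting the two instances gives
\begin{align}
g\big(R(X,Y)Z-R^{*}(X,Y)Z,W\big)=\widetilde{g}\big(\widetilde{R}(\iota_*X,\iota_*Y)\iota_*Z-\widetilde{R}^{*}(\iota_*X,\iota_*Y)\iota_*Z,\iota_*W\big).
\end{align}
If $\widetilde{R}=\widetilde{R}^{*}$, the right-hand side vanishes for all $W$, so $R=R^{*}$.

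\textbf{Part (2).} Substitute $\widetilde{R}(X,Y)Z=\widetilde{k}(\widetilde{g}(Y,Z)X-\widetilde{g}(X,Z)Y)$ into \eqref{umbeq1} with $\alpha=1$. The tangential part gives
\begin{align}
g(R(X,Y)Z,W)=(\widetilde{k}+h)\,Q(X,Y,Z,W),
\end{align}
so $R(X,Y)Z=(\widetilde{k}+h)(g(Y,Z)X-g(X,Z)Y)$ pointwise.

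It remains to show that $h$ is constant, and this is the main step. The constant-curvature tensor has no normal component on tangent vectors, so \eqref{umbeq2} with $\alpha=1$ gives
\begin{align}
g(Y,Z)\nabla^{\perp(1)}_XH^{(1)}=g(X,Z)\nabla^{\perp(1)}_YH^{(1)}.
\end{align}
Since $m\geq2$, we may take $Y=Z$ a unit vector orthogonal to $X$. This yields $\nabla^{\perp(1)}_XH^{(1)}=0$ for every $X$. Applying \eqref{umbeq3} in the same way, or \eqref{umbeq2} with $\alpha=-1$, since $\widetilde{R}^{*}$ also has constant curvature $\widetilde{k}$, gives $\nabla^{\perp(-1)}H^{(-1)}=0$. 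The duality relation in Proposition~\ref{basic1} then gives
\begin{align}
Xh=\widetilde{g}\big(\nabla^{\perp(1)}_XH^{(1)},H^{(-1)}\big)+\widetilde{g}\big(H^{(1)},\nabla^{\perp(-1)}_XH^{(-1)}\big)=0.
\end{align}
Since $M$ is connected, $h$ is constant, and $(M,g,\nabla)$ has constant curvature $k=\widetilde{k}+h$.

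A Schur-type argument would also give constancy of $k$ when $m\geq3$. The normal-connection route above is preferred because it also covers $m=2$.
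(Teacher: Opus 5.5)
Your proposal is correct and follows the paper's proof essentially step for step. Part (1) comes from subtracting the Gauss equation \eqref{umbeq1} at $\alpha=\pm1$. Part (2) uses \eqref{umbeq2} and its dual to get $\nabla^{\perp(1)}H^{(1)}=\nabla^{\perp(-1)}H^{(-1)}=0$, then the duality in Proposition~\ref{basic1} to get constancy of $\widetilde{g}(H^{(1)},H^{(-1)})$, and finally \eqref{umbeq1} again for the curvature. You conclude from the full tensorial identity $g(R(X,Y)Z,W)=(\widetilde{k}+h)\,Q(X,Y,Z,W)$, whereas the paper writes only the sectional quantity $g(R(X,Y)Y,X)$; your version is slightly cleaner, because for a non-metric connection the sectional values alone do not determine $R$.
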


\begin{proof}
    Claim $(1)$ immediately follows from equation \eqref{umbeq1}.
    Suppose that $(\widetilde{M},\widetilde{g},\widetilde{\nabla})$ has constant curvature $\widetilde{k}$.
    We prove that 
    \begin{align}
        \label{parmean}
        \nabla^{\perp(1)}_X H^{(1)}=0,\quad X\in\Gamma(TM)
    \end{align}
    holds.
    The statistical manifold $(\widetilde{M},\widetilde{g},\widetilde{\nabla})$ is also conjugate symmetric from Proposition~\ref{constcon}.
    For any orthonormal pair $\{X,Y\}$ on $(M,g)$ and $\xi\in\Gamma(TM^{\perp})$, from \eqref{umbeq2} we have
    \begin{align}
        \widetilde{g}\left(\widetilde{R}^{(1)}(\iota_*X,\iota_*Y)\iota_*Y,\xi\right) = \widetilde{g}\Big(\nabla^{\perp{(1)}}_X H^{(1)},\xi\Big).
    \end{align}
    Since the left-hand side of \eqref{umbeq2} vanishes if $(\widetilde{M},\widetilde{g},\widetilde{\nabla})$ has constant curvature, we obtain \eqref{parmean}.
    The equation $\nabla^{\perp(-1)}_X H^{(-1)}=0$ can be proved in the same manner.
    Thus, for any $X\in\Gamma(TM)$ we have
    \begin{align}
        X\widetilde{g}\left(H^{(1)},H^{(-1)}\right) = \widetilde{g}\left(\nabla^{\perp(1)}_X H^{(1)},H^{(-1)}\right) + \widetilde{g}\left(H^{(1)},\nabla^{\perp(-1)}_X H^{(-1)}\right) = 0,
    \end{align}
    so we conclude that $\widetilde{g}\left(H^{(1)},H^{(-1)}\right)$ is a constant function.
    For any orthonormal pair $\{X,Y\}$ on $(M,g)$, from \eqref{umbeq1} we have
    \begin{align}
        g\Big(R^{(1)}(X,Y)Y,X\Big) &= \widetilde{g}\left(\widetilde{R}^{(1)}(\iota_*X,\iota_*Y)\iota_*Y,\iota_*X\right) + \widetilde{g}\left(H^{(1)},H^{(-1)}\right)\\
        &=\widetilde{k} + \widetilde{g}\left(H^{(1)},H^{(-1)}\right),
    \end{align}
    therefore, the statistical manifold $(M,g,\nabla)$ has constant curvature.\\
\end{proof}

\begin{remark}
    Let $\iota:(M,g,\nabla)\to(\widetilde{M},\widetilde{g},\widetilde{\nabla})$ be a statistical immersion, where $M$ is a doubly totally-umbilical submanifold and $(\widetilde{M},\widetilde{g},\widetilde{\nabla})$ has constant curvature.
    In \cite{amami}, it is stated that if the function $\widetilde{g}\left(H^{(1)},H^{(-1)}\right)$ is constant, then $(M,g,\nabla)$ also has constant curvature, however, by Proposition~\ref{conjandcon} we see that this function is always constant under these assumptions.\\
\end{remark}

\begin{example}
    \label{umprob}
    Fix $\omega\in\{1,\ldots,n+1\}$ and $b\in(0,1)$.
    Then
    \begin{align}
        M = \{p\in\Delta^n\mid p(\omega)=b\}
    \end{align}
    is a doubly totally-umbilical submanifold of the probability simplex $(\Delta^n,g^F,\nabla^{(\mathrm{e})})$.
    Indeed, we have $B^{(0)} = H^{(0)}\otimes g$, since the image of $\Delta^n\ni p\to(2\sqrt{p(1)},\ldots,2\sqrt{p(n+1)})\in S^{n}(2)$ restricted to $M$ is an open Riemannian submanifold of $S^{n-1}(2\sqrt{1-b})\subset S^n(2)$, which is a well-known totally-umbilical submanifold in Riemannian geometry (see \cite{Chen2019} for example).
    In order to prove that $M$ is a doubly totally-umbilical submanifold, we prove $B^{(-1)}=0$.
    Consider the global affine coordinate system $(\eta^1,\ldots,\eta^n)$ of $\nabla^{(\mathrm{m})}$ defined in Example \ref{probsimp}.
    Since $M$ is a hyperplane of $\Delta^n$ with respect to the coordinate system $(\eta^1,\ldots,\eta^n)$, it is clear that $M$ is a $\nabla^{(\mathrm{m})}$-autoparallel submanifold, thus we have $B^{(-1)}=0$.\\
\end{example}

\begin{proposition}
    \label{nontrib}
    Let $\iota:(M,g,\nabla)\to(\widetilde{M},\widetilde{g},\widetilde{\nabla})$ be a statistical immersion, where $M$ is a doubly totally-umbilical submanifold.
    If the following inequality holds for each orthonormal pair of tangent vectors $\{X,Y\}$ on $(\widetilde{M},\widetilde{g})$, then the difference tensor field $K$ of $(M,g,\nabla)$ does not vanish$\mathrm{:}$
    \begin{equation}
        \label{ineq1}
        \widetilde{g}\left(\widetilde{K}_X\widetilde{K}_YY-\widetilde{K}_Y\widetilde{K}_XY,X\right)<0.
    \end{equation}
    Here, $\widetilde{K}$ is the difference tensor of $(\widetilde{g},\widetilde{\nabla})$.
\end{proposition}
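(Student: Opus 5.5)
We argue by contradiction, using the identity \eqref{addcurv} on $M$ and on $\widetilde{M}$ together with the Gauss-type equation \eqref{umbeq1}. Suppose $K=0$ on $M$, so that $\nabla=\nabla^{*}=\nabla^g$ on $M$. Then \eqref{addcurv} applied on $M$ gives $R+R^*=2R^g$, with no $[K_X,K_Y]$ term.

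Fix a point of $M$ and an orthonormal pair $\{X,Y\}$ of tangent vectors of $(M,g)$. Their images $\{\iota_*X,\iota_*Y\}$ form an orthonormal pair on $(\widetilde{M},\widetilde{g})$, so the hypothesis \eqref{ineq1} applies to them.

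We apply \eqref{umbeq1} with $Z=Y$, $W=X$ for $\alpha=1$ and for $\alpha=-1$, and add the two equations. On the ambient side we use \eqref{addcurv} for $\widetilde{M}$, which gives
\begin{align}
\widetilde{g}\left((\widetilde{R}+\widetilde{R}^*)(\iota_*X,\iota_*Y)\iota_*Y,\iota_*X\right) = 2\widetilde{g}\left(\widetilde{R}^{g}(\iota_*X,\iota_*Y)\iota_*Y,\iota_*X\right) + 2\widetilde{g}\left([\widetilde{K}_X,\widetilde{K}_Y]Y,X\right).
\end{align}
On the submanifold side the sum equals $2g(R^g(X,Y)Y,X)-2\widetilde{g}(H^{(1)},H^{(-1)})$.

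Next we compare with the case $\alpha=0$. Equation \eqref{umbeq1} is stated for every $\alpha$, and $M$ is umbilical for $\widetilde{\nabla}^{(0)}$ because $B^{(0)}=\tfrac12(B^{(1)}+B^{(-1)})=H^{(0)}\otimes g$. Rather than relying on the lemma's hypotheses at $\alpha=0$, it is cleaner to use the classical Riemannian Gauss equation directly:
\begin{align}
\widetilde{g}\left(\widetilde{R}^{g}(\iota_*X,\iota_*Y)\iota_*Y,\iota_*X\right) = g\left(R^g(X,Y)Y,X\right) - \widetilde{g}\left(H^{(0)},H^{(0)}\right).
\end{align}
Subtracting twice this from the summed equation leaves
\begin{align}
\widetilde{g}\left([\widetilde{K}_X,\widetilde{K}_Y]Y,X\right) = \widetilde{g}\left(H^{(0)},H^{(0)}\right) - \widetilde{g}\left(H^{(1)},H^{(-1)}\right).
\end{align}

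The main step is to show that the right-hand side is nonnegative. Write $H^{(\pm1)}=H^{(0)}\pm L$, where $L=\tfrac12(H^{(1)}-H^{(-1)})$. Then
\begin{align}
\widetilde{g}\left(H^{(1)},H^{(-1)}\right) = |H^{(0)}|^2-|L|^2,
\end{align}
so the right-hand side equals $|L|^2\ge 0$. This contradicts \eqref{ineq1}, which says the left-hand side is negative. Hence $K$ cannot vanish.

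In fact the argument gives more: at any point where $K=0$ the sign condition fails, so $K$ is nonzero at every point of $M$. The step most in need of care is the sign bookkeeping: checking that the convention in \eqref{ineq1} matches $\widetilde{g}([\widetilde{K}_X,\widetilde{K}_Y]Y,X)$, and that \eqref{umbeq1} carries the $H^{(\alpha)}\cdot H^{(-\alpha)}$ term with the sign used above. Note also that the hypothesis \eqref{ineq1} is imposed on all orthonormal pairs of $\widetilde{M}$, but only pairs tangent to $M$ are used.
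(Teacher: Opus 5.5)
Your proof is correct, but it reaches the key identity by a different route from the paper's. The paper uses no curvature at all. Assuming $K=0$, it subtracts the Gauss formulas for $\widetilde{\nabla}$ and $\widetilde{\nabla}^{(0)}$ to get $\widetilde{K}_{\iota_*X}\iota_*Y=g(X,Y)\left(H^{(1)}-H^{(0)}\right)$. Since $\widetilde{K}_{\iota_*X}$ is $\widetilde{g}$-self-adjoint and $\widetilde{K}_{\iota_*X}\iota_*Y=0$ for an orthonormal pair, the left-hand side of \eqref{ineq1} then collapses in one line to $\widetilde{g}(\widetilde{K}_{\iota_*X}\iota_*X,\widetilde{K}_{\iota_*Y}\iota_*Y)=\|H^{(1)}-H^{(0)}\|_{\widetilde{g}}^2$, which is your $\|L\|^2$.

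You instead sum the Gauss equations \eqref{umbeq1} for $\alpha=\pm1$, subtract the Riemannian Gauss equation, and use \eqref{addcurv} on both $M$ and $\widetilde{M}$ to isolate the $[\widetilde{K},\widetilde{K}]$ term. The sign bookkeeping you were worried about checks out: the sign in \eqref{umbeq1} follows from Proposition~\ref{basic2} and \eqref{umcon}, and the left-hand side of \eqref{ineq1} is literally $\widetilde{g}([\widetilde{K}_X,\widetilde{K}_Y]Y,X)$.

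The paper's argument is shorter and purely algebraic. Yours is heavier but exposes the structure behind it: if you do not set $K=0$ at the start, the same subtraction gives the Gauss-type identity
\[
\widetilde{g}\left([\widetilde{K}_{\iota_*X},\widetilde{K}_{\iota_*Y}]\iota_*Y,\iota_*X\right)=g\left([K_X,K_Y]Y,X\right)+\|H^{(1)}-H^{(0)}\|_{\widetilde{g}}^2 .
\]
So \eqref{ineq1} in fact forces $g([K_X,K_Y]Y,X)<0$ for every orthonormal pair, which is stronger than $K\neq0$. The same identity can also be read off from the paper's direct computation by keeping the term $\iota_*K_XY$ in $\widetilde{K}_{\iota_*X}\iota_*Y$.
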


\begin{proof}
    Assume that $K=0$ holds.
    We have 
    \begin{align*}
        \widetilde{K}_{\iota_*X}{\iota_*Y} = g(X,Y)\left(H^{(1)}-H^{(0)}\right)
    \end{align*}
    for each $X,Y\in\Gamma(TM)$.
    Thus, for each orthonormal pair $\{X,Y\}$ on $(M,g)$, we have
    \begin{align*}
        \widetilde{g}\left(\widetilde{K}_{\iota_*X}\widetilde{K}_{\iota_*Y}\iota_*Y-\widetilde{K}_{\iota_*Y}\widetilde{K}_{\iota_*X}\iota_*Y,\iota_*X\right)&=\widetilde{g}\left(\widetilde{K}_{\iota_*X}\iota_*X,\widetilde{K}_{\iota_*Y}\iota_*Y\right)\\
        &=\|H^{(1)}-H^{(0)}\|_{\widetilde{g}}^2\geq 0,
    \end{align*}
    which contradicts the inequality \eqref{ineq1}.\\
\end{proof}

\subsection{Doubly totally-umbilical hypersurfaces}
Let $\iota:(M^m,g,\nabla)\to(\widetilde{M}^{m+1},\widetilde{g},\widetilde{\nabla})$ be a statistical immersion. 
If $n=m+1$, the statistical submanifold $M$ is called a \textit{statistical hypersurface} of $\widetilde{M}$.
We also assume that $M$ is orientable in this subsection.\\

Since $M$ is orientable, let $\boldsymbol{n}$ be a unit normal vector field along $\iota$.
For each $\alpha\in\mathbb{R}$, there exists a symmetric $h^{(\alpha)}\in\Gamma(T^{(0,2)}M)$ such that $B^{(\alpha)} = \boldsymbol{n}\otimes h^{(\alpha)}$.
There also exists an 1-form $\tau^{(\alpha)}\in\Gamma(T^*M)$ for each $\alpha\in\mathbb{R}$ such that
\begin{align}
    \nabla^{\perp{(\alpha)}}_X \boldsymbol{n} = \tau^{(\alpha)}(X)\boldsymbol{n},\quad X\in\Gamma(TM).
\end{align}
For each $\alpha\in\mathbb{R}$, the relations
\begin{align}
    h^{(0)} =& \frac{h^{(\alpha)}+h^{(-\alpha)}}{2},\\
    \tau^{(\alpha)} &= -\tau^{(-\alpha)}
\end{align}
hold, since $\tau^{(0)} = 0$.\\

\begin{proposition}
    \label{hypsur}
    Let $\iota:(M^m,g,\nabla)\to(\widetilde{M}^{m+1},\widetilde{g},\widetilde{\nabla})$ be statistical hypersurface immersion, where $M$ is a doubly totally-umbilical submanifold.
    For each $\alpha\in\mathbb{R}$, we have
    \begin{align}
        h^{(\alpha)} &= \frac{1}{m}(\mathrm{tr}_g h^{(\alpha)})g,\\
        H^{(\alpha)} &= \frac{1}{m}\mathrm{tr}_g h^{(\alpha)}\boldsymbol{n},\\
        A^{(\alpha)}_{\boldsymbol{n}}X &= \frac{1}{m}\mathrm{tr}_g h^{(-\alpha)}X,\quad X\in\Gamma(TM).
    \end{align}
\end{proposition}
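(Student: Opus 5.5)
The plan is to reduce everything to the case $\alpha=\pm1$ given by Definition \ref{defum} and then extend linearly in $\alpha$. First I show that $B^{(\alpha)}$ is affine in $\alpha$. The induced connections satisfy
\[
\widetilde{\nabla}^{(\alpha)}=\tfrac{1+\alpha}{2}\widetilde{\nabla}+\tfrac{1-\alpha}{2}\widetilde{\nabla}^{*},
\]
and taking normal components in \eqref{Gauss} gives
\[
B^{(\alpha)}=\tfrac{1+\alpha}{2}B^{(1)}+\tfrac{1-\alpha}{2}B^{(-1)},
\]
which is consistent with the Remark giving $B^{(0)}$. Since $M$ is doubly totally-umbilical, $B^{(\pm1)}=H^{(\pm1)}\otimes g$. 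Hence
\[
B^{(\alpha)}=\Bigl(\tfrac{1+\alpha}{2}H^{(1)}+\tfrac{1-\alpha}{2}H^{(-1)}\Bigr)\otimes g .
\]
Taking the trace over an orthonormal frame identifies the vector in parentheses with $H^{(\alpha)}$. So $B^{(\alpha)}=H^{(\alpha)}\otimes g$ for every $\alpha$; this is essentially Remark \ref{umrem}(2).

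Next I specialize to the hypersurface case. Write $B^{(\alpha)}=\boldsymbol{n}\otimes h^{(\alpha)}$ and pair both sides with $\boldsymbol{n}$ using $\widetilde{g}$. Since $\widetilde{g}(\boldsymbol{n},\boldsymbol{n})=1$, this gives
\[
h^{(\alpha)}=\widetilde{g}(H^{(\alpha)},\boldsymbol{n})\,g .
\]
Taking $\mathrm{tr}_g$ yields $\mathrm{tr}_g h^{(\alpha)}=m\,\widetilde{g}(H^{(\alpha)},\boldsymbol{n})$, which gives the first formula. For the second, the normal bundle is spanned by $\boldsymbol{n}$, so $H^{(\alpha)}=\widetilde{g}(H^{(\alpha)},\boldsymbol{n})\boldsymbol{n}=\frac{1}{m}\mathrm{tr}_g h^{(\alpha)}\boldsymbol{n}$. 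Alternatively, one can compute directly from the definition: $H^{(\alpha)}=\frac{1}{m}\sum_i h^{(\alpha)}(e_i,e_i)\boldsymbol{n}$.

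For the shape operator I use Proposition \ref{basic1} with $-\alpha$ in place of $\alpha$ and $\xi=\boldsymbol{n}$:
\[
g(A^{(\alpha)}_{\boldsymbol{n}}X,Y)=\widetilde{g}(B^{(-\alpha)}(X,Y),\boldsymbol{n})=h^{(-\alpha)}(X,Y)=\tfrac{1}{m}\mathrm{tr}_g h^{(-\alpha)}\,g(X,Y).
\]
This holds for all $Y$, and nondegeneracy of $g$ gives $A^{(\alpha)}_{\boldsymbol{n}}X=\frac{1}{m}\mathrm{tr}_g h^{(-\alpha)}X$. This is also consistent with \eqref{umcon}.

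The only step needing care is the linearity of $B^{(\alpha)}$ in $\alpha$. It requires checking that the induced connection on $\iota^{*}T\widetilde{M}$ from $\widetilde{\nabla}^{(\alpha)}$ is the same affine combination, and that the tangent parts combine to $\nabla^{(\alpha)}$. Both follow from \eqref{induce} and the definition of $\alpha$-connections, so I expect no real obstacle there.
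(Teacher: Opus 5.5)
Your proof is correct and is essentially the paper's argument. The paper only says the formulas follow from a straightforward computation using \eqref{umcon}; your use of Proposition~\ref{basic1} with $-\alpha$ carries the same content, and your linearity-in-$\alpha$ step spells out the fact, implicit in Remark~\ref{umrem}, that $B^{(\alpha)}=H^{(\alpha)}\otimes g$ holds for every $\alpha$ and not only for $\alpha=\pm1$.
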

\begin{proof}
    The proof is obtained by a straightforward computation using \eqref{umcon}.\\
\end{proof}

\begin{proposition}
    Let $\iota:(M^m,g,\nabla)\to(\widetilde{M}^{m+1},\widetilde{g},\widetilde{\nabla})$ be statistical hypersurface immersion, where $M$ is a doubly totally-umbilical submanifold.
    If $(\widetilde{M},\widetilde{g},\widetilde{\nabla})$ is conjugate symmetric, then for each $\alpha\in\mathbb{R}$, we have
    \begin{align}
        \label{umhyp1}
        d(\mathrm{tr}_g h^{(\alpha)} - \mathrm{tr}_g h^{(-\alpha)}) = -\left(\mathrm{tr}_g h^{(\alpha)}\tau^{(\alpha)}-\mathrm{tr}_g h^{(-\alpha)}\tau^{(-\alpha)}\right).
    \end{align}
    Moreover, if $(\widetilde{M},\widetilde{g},\widetilde{\nabla})$ has constant curvature, then
    \begin{align}
        \label{umhyp2}
        d(\mathrm{tr}_gh^{(1)}) = -\mathrm{tr}_gh^{(1)}\tau^{(1)}.
    \end{align}
\end{proposition}

\begin{proof}
    If $(\widetilde{M},\widetilde{g},\widetilde{\nabla})$ is conjugate symmetric, then $\widetilde{R}^{(-\alpha)}=\widetilde{R}^{(\alpha)}$.
    Hence, by \eqref{umbeq2} and \eqref{umbeq3}, we obtain
    \begin{align}
        \label{dermean}
        \nabla^{\perp(\alpha)}_X H^{(\alpha)} = \nabla^{\perp(-\alpha)}_X H^{(-\alpha)} 
    \end{align}
    for each $X\in\Gamma(TM)$.
    Since
    \begin{align}
        \nabla^{\perp(\alpha)}_X H^{(\alpha)} = \frac{1}{m}X\mathrm{tr}_g h^{(\alpha)}\boldsymbol{n} + \frac{1}{m}\mathrm{tr}_g h^{(\alpha)}\tau^{(\alpha)}(X)\boldsymbol{n}
    \end{align}
    for each $X\in\Gamma(TM)$, comparing the coefficients of $\boldsymbol{n}$ yields \eqref{umhyp1}.
    If $(\widetilde{M},\widetilde{g},\widetilde{\nabla})$ has constant curvature, then \eqref{parmean} holds.
    Thus, \eqref{umhyp2} follows from the same computation with $\alpha=1$.\\
\end{proof}

\begin{proposition}
    \label{umhyp3}
    Let $\iota:(M^m,g,\nabla)\to(\widetilde{M}^{m+1},\widetilde{g},\widetilde{\nabla})$ be statistical hypersurface immersion, where $M$ is a doubly totally-umbilical submanifold.
    If $(\widetilde{M},\widetilde{g},\widetilde{\nabla}^{(\alpha)})$ has constant curvature for each $\alpha\in\mathbb{R}$, then $\mathrm{tr}_gh^{(\alpha)}$ is a constant function for each $\alpha\in\mathbb{R}$.\\
\end{proposition}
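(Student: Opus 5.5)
The plan is to apply \eqref{umhyp2} not just to $\widetilde{\nabla}$ but to every $\widetilde{\nabla}^{(\alpha)}$, and then compare coefficients of the resulting identity as a polynomial in $\alpha$.

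First I check that, for each $\alpha$, the immersion $\iota:(M,g,\nabla^{(\alpha)})\to(\widetilde{M},\widetilde{g},\widetilde{\nabla}^{(\alpha)})$ is again a doubly totally-umbilical statistical hypersurface. Since $\widetilde{\nabla}^{(\alpha)}$ is affine in $\alpha$, so are its tangential and normal parts. Hence
\begin{align}
    B^{(\alpha)}=\tfrac{1+\alpha}{2}B^{(1)}+\tfrac{1-\alpha}{2}B^{(-1)},\qquad \nabla^{\perp(\alpha)}=\tfrac{1+\alpha}{2}\nabla^{\perp(1)}+\tfrac{1-\alpha}{2}\nabla^{\perp(-1)} .
\end{align}
The same affine combination holds for $H^{(\alpha)}$. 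Therefore $B^{(\pm\alpha)}=H^{(\pm\alpha)}\otimes g$, and Remark~\ref{umrem}(2) gives the claim. The ambient $(\widetilde{M},\widetilde{g},\widetilde{\nabla}^{(\alpha)})$ has constant curvature by hypothesis. So \eqref{umhyp2}, applied to this pair, gives for every $\alpha\in\mathbb{R}$:
\begin{align}
    d(\mathrm{tr}_g h^{(\alpha)}) = -\mathrm{tr}_g h^{(\alpha)}\,\tau^{(\alpha)}.
\end{align}
The key point is that $\tau^{(\alpha)}$ for the pair is exactly the normal $1$-form of $\widetilde{\nabla}^{(\alpha)}$, and $\tau^{(1)}$ for the new structure is our $\tau^{(\alpha)}$.

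Next I make the $\alpha$-dependence explicit. Put $t=\mathrm{tr}_g h^{(0)}$, $s=\tfrac12(\mathrm{tr}_g h^{(1)}-\mathrm{tr}_g h^{(-1)})$ and $\tau=\tau^{(1)}$. By the affine formulas above, $\mathrm{tr}_g h^{(\alpha)}=t+\alpha s$ and $\tau^{(\alpha)}=\tfrac{1+\alpha}{2}\tau^{(1)}+\tfrac{1-\alpha}{2}\tau^{(-1)}=\alpha\tau$, using $\tau^{(-1)}=-\tau^{(1)}$. Substituting gives
\begin{align}
    dt+\alpha\, ds = -\alpha t\,\tau - \alpha^2 s\,\tau \quad\text{for all }\alpha\in\mathbb{R}.
\end{align}
Comparing coefficients of $1,\alpha,\alpha^2$ yields $dt=0$, $ds=-t\tau$ and $s\tau=0$.

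The conclusion then follows. Since $M$ is connected, $dt=0$ makes $t$ constant. For $s$, let $U=\{s\neq0\}$, which is open. On $U$ we have $\tau=0$, hence $ds=-t\tau=0$, so $s$ is locally constant and nonzero on $U$. If $x\in\overline{U}$, continuity shows $s(x)$ equals the nonzero value of $s$ on the nearby component of $U$, so $x\in U$. Thus $U$ is closed as well as open, and by connectedness either $U=\emptyset$ (so $s\equiv0$) or $U=M$ (so $s$ is constant). Either way $\mathrm{tr}_g h^{(\alpha)}=t+\alpha s$ is constant for every $\alpha$.

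The main point needing care is the first step: that double umbilicity and the identification of $\tau^{(\alpha)}$ transfer correctly to $\widetilde{\nabla}^{(\alpha)}$, in particular the relation $\tau^{(\alpha)}=\alpha\tau^{(1)}$. Both rest on the affine dependence of the Gauss–Weingarten data on $\alpha$, which I will verify directly from \eqref{Gauss} and \eqref{Wein}.
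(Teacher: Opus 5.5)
Your proof is correct, but it takes a different route from the paper's. The paper never uses the normal connection forms. It applies Proposition~\ref{conjandcon}(2) to $\widetilde\nabla^{(0)}$ and to $\widetilde\nabla$, which shows that $\widetilde g(H^{(0)},H^{(0)})$ and $\widetilde g(H^{(1)},H^{(-1)})$ are constant. In your notation these say that $t^2$ and $(t+s)(t-s)=t^2-s^2$ are constant. The paper then concludes from $2(\mathrm{tr}_g h^{(0)}-\mathrm{tr}_g h^{(1)})\,d\,\mathrm{tr}_g h^{(1)}=0$, which is $s\,ds=0$, by connectedness.

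You instead work with the first-order equations $d\,\mathrm{tr}_g h^{(\alpha)}=-\mathrm{tr}_g h^{(\alpha)}\tau^{(\alpha)}$, the hypersurface form of $\nabla^{\perp(\alpha)}H^{(\alpha)}=0$, and separate powers of $\alpha$. Only $\alpha\in\{0,\pm1\}$ is actually needed, so the hypothesis is used at the same values as in the paper. Your route costs the extra bookkeeping $\tau^{(\alpha)}=\alpha\tau^{(1)}$, but it yields more. The relations $ds=-t\tau$ and $s\tau=0$ give Corollary~\ref{umhyp4} at the same time: if $s\neq0$ then $\tau=0$, and if $s=0$ and $t\neq0$ then $\tau=0$.

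Your re-verification that the $\alpha$-pair is doubly totally-umbilical is harmless but unnecessary. Equation \eqref{umbeq2} is already stated for every $\alpha$, so the argument that gives \eqref{parmean} yields $\nabla^{\perp(\alpha)}H^{(\alpha)}=0$ directly.

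Your last step is slightly imprecise. A point of $\overline U$ need not have a single ``nearby component'' of $U$; it could be a limit of points from infinitely many components. So the closedness of $U$ is not justified as you phrased it. There are two easy fixes:
\begin{enumerate}
\item Argue componentwise. For a component $C$ of $U$, continuity gives $s\equiv c_C\neq0$ on $\overline C$, so $\overline C\subset U$. Being connected, $\overline C\subset C$, so $C$ is open and closed, hence $C=M$.
\item Note that $d(s^2)=2s\,ds=-2ts\tau=0$ everywhere. Then $s^2$ is constant, and continuity fixes the sign of $s$.
\end{enumerate}
The paper compresses this same step into ``since $M$ is connected''.
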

\begin{proof}
    By Proposition~\ref{conjandcon} $(2)$, the function $\widetilde{g}(H^{(0)},H^{(0)})$ is constant, hence $\mathrm{tr}_gh^{(0)}$ is also constant.
    Since $\widetilde{g}(H^{(1)},H^{(-1)})$ is also constant, we have
    \begin{align}
        0 = m^2X\widetilde{g}(H^{(1)},H^{(-1)}) &= X\left(\mathrm{tr}_gh^{(1)}\cdot\mathrm{tr}_gh^{(-1)}\right)\\
        &= X\left(\mathrm{tr}_gh^{(1)}(2\mathrm{tr}_gh^{(0)}-\mathrm{tr}_gh^{(1)})\right)\\
        &=2\left(\mathrm{tr}_gh^{(0)}-\mathrm{tr}_gh^{(1)}\right)X\mathrm{tr}_gh^{(1)}.
    \end{align}
    Since the manifiold $M$ is connected, the function $\mathrm{tr}_gh^{(1)}$ is constant, and consequently so is $\mathrm{tr}_gh^{(-1)}$.
    The function $\mathrm{tr}_gh^{(\alpha)}$ is also constant for each $\alpha\in\mathbb{R}$, since 
    \begin{align}
        \mathrm{tr}_gh^{(\alpha)} = \frac{1+\alpha}{2}\mathrm{tr}_gh^{(1)} + \frac{1-\alpha}{2}\mathrm{tr}_gh^{(-1)}
    \end{align}
    holds.\\
\end{proof}

The following corollary is obtained immediately from equation \eqref{umhyp2} and Proposition~\ref{umhyp3}.\\

\begin{cor}
    \label{umhyp4}
    Let $\iota:(M^m,g,\nabla)\to(\widetilde{M}^{m+1},\widetilde{g},\widetilde{\nabla})$ be a statistical hypersurface immersion, where $M$ is a doubly totally-umbilical submanifold.
    If $(\widetilde{M},\widetilde{g},\widetilde{\nabla}^{(\alpha)})$ has constant curvature for each $\alpha\in\mathbb{R}$ and $M$ is not doubly autoparallel, then $\tau^{(\alpha)}=0$ holds for each $\alpha\in\mathbb{R}$.\\
\end{cor}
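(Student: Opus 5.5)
By Proposition~\ref{umhyp3}, each function $\mathrm{tr}_g h^{(\alpha)}$ is constant on $M$. I will first dispose of the doubly autoparallel case by contradiction. If $\mathrm{tr}_g h^{(1)}=\mathrm{tr}_g h^{(-1)}=0$, then Proposition~\ref{hypsur} gives $h^{(1)}=h^{(-1)}=0$, that is, $B^{(1)}=B^{(-1)}=0$, and $M$ would be doubly autoparallel, contrary to the hypothesis. Hence at least one of the constants $\mathrm{tr}_g h^{(1)}$, $\mathrm{tr}_g h^{(-1)}$ is nonzero.

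The main step is equation \eqref{umhyp2}, which applies because $(\widetilde{M},\widetilde{g},\widetilde{\nabla})=(\widetilde{M},\widetilde{g},\widetilde{\nabla}^{(1)})$ has constant curvature. Since $\mathrm{tr}_g h^{(1)}$ is constant, its differential vanishes, and \eqref{umhyp2} reduces to
\begin{align}
  0=-\mathrm{tr}_g h^{(1)}\,\tau^{(1)}.
\end{align}
Suppose first that $\mathrm{tr}_g h^{(1)}\neq0$. Then $\tau^{(1)}=0$. Otherwise $\mathrm{tr}_g h^{(-1)}\neq0$. In that case I apply the same argument to the conjugate structure $(\widetilde{M},\widetilde{g},\widetilde{\nabla}^{*})$. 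It also has constant curvature, and the roles of $\alpha=\pm1$ are swapped, so \eqref{umhyp2} becomes $d(\mathrm{tr}_g h^{(-1)})=-\mathrm{tr}_g h^{(-1)}\tau^{(-1)}$. This gives $\tau^{(-1)}=0$.

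It remains to pass to all $\alpha$. Since $\tau^{(-1)}=-\tau^{(1)}$, either case yields $\tau^{(1)}=\tau^{(-1)}=0$. The normal connection $\nabla^{\perp(\alpha)}$ is induced by $\widetilde{\nabla}^{(\alpha)}=\frac{1+\alpha}{2}\widetilde{\nabla}+\frac{1-\alpha}{2}\widetilde{\nabla}^{*}$, so $\nabla^{\perp(\alpha)}$ is the same affine combination of $\nabla^{\perp(1)}$ and $\nabla^{\perp(-1)}$. Therefore
\begin{align}
  \tau^{(\alpha)}=\frac{1+\alpha}{2}\tau^{(1)}+\frac{1-\alpha}{2}\tau^{(-1)}=0
\end{align}
for every $\alpha\in\mathbb{R}$, which is the claim.

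The only delicate point is the symmetric application of \eqref{umhyp2} to $\widetilde{\nabla}^{*}$. Both hypotheses used there are symmetric under conjugation: doubly total umbilicity of $M$, and constant curvature of the ambient space for each $\alpha$. So the argument goes through with $\alpha=-1$ in place of $\alpha=1$.
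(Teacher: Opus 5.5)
Your proof is correct and follows the paper's route, which the paper calls immediate from \eqref{umhyp2} and Proposition~\ref{umhyp3}: constancy of $\mathrm{tr}_g h^{(1)}$ reduces \eqref{umhyp2} to $\mathrm{tr}_g h^{(1)}\tau^{(1)}=0$, and the relations among the $\tau^{(\alpha)}$ finish the argument. You also make explicit two points the paper leaves tacit: the case $\mathrm{tr}_g h^{(1)}=0\neq\mathrm{tr}_g h^{(-1)}$, handled by the conjugate version of \eqref{umhyp2} (valid since $\nabla^{\perp(-1)}H^{(-1)}=0$ as in the proof of Proposition~\ref{conjandcon}), and the affine-combination formula for $\tau^{(\alpha)}$ that extends the conclusion to every $\alpha$.
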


Laslty, we obtain the following theorem.

\begin{theorem}
    Let $\iota:(M^m,g,\nabla)\to(\widetilde{M}^{m+1},\widetilde{g},\widetilde{\nabla})$ be a statistical hypersurface immersion, where $M$ is a doubly totally-umbilical submanifold.
    If $(\widetilde{M},\widetilde{g},\widetilde{\nabla}^{(\alpha)})$ has constant curvature for each $\alpha\in\mathbb{R}$ and $\widetilde{k}^{(1)}\neq\widetilde{k}^{(0)}$, then there exists a $\beta\in\mathbb{R}$ such that $M$ is a $\nabla^{(\beta)}$-autoparallel submanifold of $\widetilde{M}$.\\
\end{theorem}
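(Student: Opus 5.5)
The plan is to reduce everything to the behaviour of the constant functions $\mathrm{tr}_gh^{(\alpha)}$. By Proposition~\ref{hypsur}, $B^{(\alpha)}=\boldsymbol{n}\otimes h^{(\alpha)}$ with $h^{(\alpha)}=\frac{1}{m}(\mathrm{tr}_gh^{(\alpha)})g$. So $M$ is $\widetilde{\nabla}^{(\beta)}$-autoparallel exactly when $\mathrm{tr}_gh^{(\beta)}=0$. By Proposition~\ref{umhyp3}, the numbers $t_1=\mathrm{tr}_gh^{(1)}$ and $t_{-1}=\mathrm{tr}_gh^{(-1)}$ are constants, and
\begin{align}
    \mathrm{tr}_gh^{(\alpha)}=\frac{1+\alpha}{2}t_1+\frac{1-\alpha}{2}t_{-1}
\end{align}
is an affine function of $\alpha$.

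If $t_1\neq t_{-1}$, this affine function has a unique root $\beta=\frac{t_1+t_{-1}}{t_{-1}-t_1}$, and we are done. If $t_1=t_{-1}=0$, then $M$ is doubly autoparallel and any $\beta$ works. The remaining case, $t_1=t_{-1}=t\neq0$, must be ruled out using $\widetilde{k}^{(1)}\neq\widetilde{k}^{(0)}$; this is the main obstacle.

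In that case $B^{(1)}=B^{(0)}$. Since $\widetilde{\nabla}^{(\alpha)}=\widetilde{\nabla}^g+\alpha\widetilde{K}$, we have $B^{(1)}-B^{(0)}=(\widetilde{K}_{\iota_*X}\iota_*Y)^{\perp}$. Hence $\widetilde{K}_{\iota_*X}\iota_*Y$ is tangent to $M$ for all tangent $X,Y$. Using that $\widetilde{g}(\widetilde{K}_UV,W)$ is totally symmetric, I would derive two facts.

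First, for tangent $X$, the vector $\widetilde{K}_{\iota_*X}\boldsymbol{n}$ vanishes. Its tangent part pairs with $Y$ as $\widetilde{g}(\boldsymbol{n},\widetilde{K}_{\iota_*X}\iota_*Y)=0$. Its normal part pairs as $\widetilde{g}(\widetilde{K}_{\iota_*X}\boldsymbol{n},\boldsymbol{n})=\widetilde{g}(\widetilde{K}_{\boldsymbol{n}}\boldsymbol{n},\iota_*X)$, and this equals $\widetilde{g}(\boldsymbol{n},\widetilde{K}_{\boldsymbol{n}}\iota_*X)=\widetilde{g}(\boldsymbol{n},\widetilde{K}_{\iota_*X}\boldsymbol{n})$. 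It is therefore the same quantity as the tangent part evaluated at $Y=X$, which is zero.

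Second, the computation just made also gives $\widetilde{g}(\widetilde{K}_{\boldsymbol{n}}\boldsymbol{n},\iota_*X)=0$ for every tangent $X$, so $\widetilde{K}_{\boldsymbol{n}}\boldsymbol{n}=\lambda\boldsymbol{n}$ for some function $\lambda$.

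Next I use the curvature of the ambient space. From the identity $\widetilde{R}^{(\alpha)}=\widetilde{R}^g+\alpha^2[\widetilde{K},\widetilde{K}]$ and constant curvature for each $\alpha$, we get
\begin{align}
    [\widetilde{K}_U,\widetilde{K}_V]W=(\widetilde{k}^{(1)}-\widetilde{k}^{(0)})\left(\widetilde{g}(V,W)U-\widetilde{g}(U,W)V\right).
\end{align}
Take $U=\iota_*X$ with $X$ a unit tangent vector, and $V=W=\boldsymbol{n}$. The left side is $\widetilde{K}_{\iota_*X}(\lambda\boldsymbol{n})-\widetilde{K}_{\boldsymbol{n}}\widetilde{K}_{\iota_*X}\boldsymbol{n}$, which is $0$ by the two facts above. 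The right side is $(\widetilde{k}^{(1)}-\widetilde{k}^{(0)})\iota_*X\neq0$.

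This contradiction excludes the case $t_1=t_{-1}\neq0$, so some $\beta$ with $B^{(\beta)}=0$ always exists. The only care needed is to confirm that Propositions~\ref{hypsur} and~\ref{umhyp3} apply, which is the case since the hypotheses of the theorem are exactly theirs. Orientability is assumed throughout this subsection, so the unit normal $\boldsymbol{n}$ is available.
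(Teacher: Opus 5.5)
\textbf{There is a genuine gap.} Your reduction to the constants $t_{\pm1}$, your case split, and your closing commutator contradiction all match the paper's proof. The step that fails is the one showing that the normal component of $\widetilde{K}_{\iota_*X}\boldsymbol{n}$ vanishes.

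Put $C(U,V,W)=\widetilde{g}(\widetilde{K}_UV,W)$. The condition $B^{(1)}=B^{(0)}$ gives only $C(X,Y,\boldsymbol{n})=0$ for tangent $X,Y$, i.e.\ it controls components with exactly one normal slot. The normal component you need is $C(X,\boldsymbol{n},\boldsymbol{n})$, which has two normal slots. Your chain of equalities only swaps the two copies of $\boldsymbol{n}$ and returns to the same number. That number is not the tangent part at $Y=X$, which is $C(X,X,\boldsymbol{n})$, and total symmetry gives no relation between the two. Your second fact concerns the same component $C(\boldsymbol{n},\boldsymbol{n},X)$, so it is unproved as well.

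This matters for the conclusion. Let $\mu(X)=C(X,\boldsymbol{n},\boldsymbol{n})$, let $\mu^{\sharp}$ be its $g$-dual, and let $K$ be the difference tensor of $M$. Then the commutator identity only gives
$\iota_*\bigl(K_X\mu^{\sharp}-\mu(X)\mu^{\sharp}\bigr)=(\widetilde{k}^{(1)}-\widetilde{k}^{(0)})\iota_*X$.
This contradicts $\widetilde{k}^{(1)}\neq\widetilde{k}^{(0)}$ only once you know $\mu=0$.

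\textbf{How to close it.} The missing input is Codazzi-type (differential) information, not an algebraic identity. Since $\tau^{(0)}=0$,
$C(X,\boldsymbol{n},\boldsymbol{n})=\widetilde{g}(\widetilde{\nabla}^{(1)}_X\boldsymbol{n}-\widetilde{\nabla}^{(0)}_X\boldsymbol{n},\boldsymbol{n})=\tau^{(1)}(X)$.
In your remaining case $\mathrm{tr}_gh^{(1)}=t$ is a nonzero constant, so \eqref{umhyp2} gives $0=dt=-t\,\tau^{(1)}$, hence $\tau^{(1)}=0$; this is Corollary~\ref{umhyp4}. With this, both of your facts hold and the rest of your argument goes through unchanged.

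That is precisely the paper's route. It obtains $\widetilde{K}_{\iota_*X}\boldsymbol{n}=0$ from Corollary~\ref{umhyp4} for the normal part, and from $A^{(1)}_{\boldsymbol{n}}=A^{(0)}_{\boldsymbol{n}}$ for the tangent part (equivalent to your symmetry argument). It then evaluates $\widetilde{g}([\widetilde{K}_{\iota_*X},\widetilde{K}_{\boldsymbol{n}}]\boldsymbol{n},\iota_*X)=\widetilde{k}^{(1)}-\widetilde{k}^{(0)}$ to reach the contradiction.
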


\begin{proof}
    If $M$ is doubly autoparallel, the statement is clear.
    Suppose that $M$ is not doubly autoparallel.
    We first show that $\mathrm{tr}_gh^{(1)}\neq\mathrm{tr}_gh^{(0)}$ holds, where they are constant functions by Proposition~\ref{umhyp3}.
    If we assume that $\mathrm{tr}_gh^{(1)}=\mathrm{tr}_gh^{(0)}$ holds, then by Corollary~\ref{umhyp4}, for any $X\in\Gamma(TM)$ we have
    \begin{align}
        \widetilde{K}_{\iota_*X} \boldsymbol{n} = 0.
    \end{align}
    Here,
    \begin{align}
        \widetilde{K}_{\boldsymbol{n}}\boldsymbol{n} &= \sum_{i=1}^m\widetilde{g}\left(\widetilde{K}_{\boldsymbol{n}}\boldsymbol{n},\iota_*e_i\right)\iota_*e_i + \widetilde{g}\left(\widetilde{K}_{\boldsymbol{n}}\boldsymbol{n},\boldsymbol{n}\right)\boldsymbol{n}\\
        &= \widetilde{g}\left(\widetilde{K}_{\boldsymbol{n}}\boldsymbol{n},\boldsymbol{n}\right)\boldsymbol{n}
    \end{align}
    where $\{e_1,\ldots,e_m\}$ is an orthonormal frame of $(M,g)$.
    Consequently,
    \begin{align}
        0 &= \widetilde{g}\left(\left[\widetilde{K}_{\iota_*X},\widetilde{K}_{\boldsymbol{n}}\right]\boldsymbol{n},\iota_*X\right)\\
        &= \widetilde{g}\left(\widetilde{R}^{(1)}(\iota_*X,\boldsymbol{n})\boldsymbol{n} - \widetilde{R}^{(0)}(\iota_*X,\boldsymbol{n})\boldsymbol{n},\iota_*X\right)\\
        &=\widetilde{k}^{(1)}-\widetilde{k}^{(0)},
    \end{align}
    where $X$ is a unit vector on $(M,g)$.
    This contradicts $\widetilde{k}^{(1)}\neq\widetilde{k}^{(0)}$.
    Therefore, since $\mathrm{tr}_gh^{(1)}\neq\mathrm{tr}_gh^{(0)}$ holds, we define $\beta\in\mathbb{R}$ by 
    \begin{align}
        \beta = \frac{\mathrm{tr}_gh^{(0)}}{\mathrm{tr}_gh^{(0)}-\mathrm{tr}_gh^{(1)}}.
    \end{align}
    Then we obtain
    \begin{align}
        \mathrm{tr}_g h^{(\beta)} &= \frac{1+\beta}{2}\mathrm{tr}_g h^{(1)} + \frac{1-\beta}{2}\mathrm{tr}_g h^{(-1)}\\
        &= \frac{1+\beta}{2}\mathrm{tr}_g h^{(1)} + \frac{1-\beta}{2} \left(2\mathrm{tr}_gh^{(0)}-\mathrm{tr}_gh^{(1)}\right)\\
        &= \beta \mathrm{tr}_g h^{(1)} + (1-\beta)\mathrm{tr}_gh^{(0)} = 0,
    \end{align}
    which implies $h^{(\beta)}=0$ by Proposition~\ref{hypsur}, and hence we have $B^{(\beta)}=0$.
    Therefore, $M$ is a $\nabla^{(\beta)}$-autoparallel submanifold.\\
\end{proof}

\subsection{Doubly totally-umbilical submanifolds of Hessian manifolds}
Since Hessian manifolds are equipped with the Hessian curvature, it is worthwhile to prepare formulas for this curvature on doubly totally-umbilical submanifolds.
Hereafter, for a statistical immersion $\iota:(M,g,\nabla)\to(\widetilde{M},\widetilde{g},\widetilde{\nabla})$ we write $B^{*}=B^{(-1)}$, $\widehat{B}=B^{(0)}$, $H^{*}=H^{(-1)}$, $\widehat{H}=H^{(0)}$, $A^{*}=A^{(-1)}$, $\widehat{A}=A^{(0)}$, $\nabla^{\perp*}=\nabla^{\perp(-1)}$, and $\widehat{\nabla}^{\perp}=\nabla^{\perp(0)}$.\\

\begin{proposition}
    Let $\iota:(M,g,\nabla)\to(\widetilde{M},\widetilde{g},\widetilde{\nabla})$ be a statistical immersion, where $M$ is a doubly totally-umbilical submanifold.
    The following formulas hold.
    \begin{align}
        \widetilde{g}\left(\left(\widetilde{\nabla}_{\iota_*X}\widetilde{K}\right)(\iota_*Y,\iota_*Z),\iota_*W\right) =& g\left(\left(\nabla_X K\right)(Y,Z),W\right) + g(X,W)g(Y,Z)\widetilde{g}\left(H^{*},\widehat{H}-H\right)\\
        + g(X,Y)&g(Z,W)\widetilde{g}\left(H,H^{*}-\widehat{H}\right) + g(X,Z)g(Y,W)\widetilde{g}\left(H,H^{*}-\widehat{H}\right),\label{Hesscurv1}\\
        \widetilde{g}\left(\left(\widetilde{\nabla}_{\iota_*X}\widetilde{K}\right)(\iota_*Y,\iota_*Z),\xi\right) &= g(K_X Y,Z)\widetilde{g}(H^{*},\xi) + g(Y,Z)\widetilde{g}\left(\nabla^{\perp}_X H-\nabla^{\perp}_X\widehat{H},\xi\right)\\
        +g(X,Y)\widetilde{g}&\left(\widehat{\nabla}^{\perp}_Z H-\nabla^{\perp}_Z H,\xi\right) + g(X,Z)\widetilde{g}\left(\widehat{\nabla}^{\perp}_Y H-\nabla^{\perp}_Y H,\xi\right),\label{Hesscurv2}\\
        \widetilde{g}\left(\left(\widetilde{\nabla}_{\iota_*X}\widetilde{K}\right)(\iota_*Y,\xi),\iota_*Z\right) &= g(K_X Y,Z)\widetilde{g}(H^{*},\xi) + g(Y,Z)\widetilde{g}\left(\nabla^{\perp*}_X \widehat{H}-\nabla^{\perp*}_XH^{*},\xi\right)\\
        +g(X,Z)\widetilde{g}&\left(\nabla^{\perp*}_Y H^{*}-\widehat{\nabla}^{\perp}_YH^{*},\xi\right) + g(X,Y)\widetilde{g}\left(\widehat{\nabla}^{\perp}_Z H-\nabla^{\perp}_Z H,\xi\right),\label{Hesscurv3}\\
        \widetilde{g}\left(\left(\widetilde{\nabla}_{\iota_*X}\widetilde{K}\right)(\iota_*Y,\xi),\eta\right) &= \widetilde{g}\left(\left(\nabla_X K^{\perp}\right)(Y,\xi),\eta\right) + g(X,Y)\widetilde{g}\left(\widetilde{K}_{\xi}\eta,H\right)\\
         + g(X&,Y)\left(\widetilde{g}\left(\widehat{H},\xi\right)\widetilde{g}\Big(H^{*},\eta\Big) - \widetilde{g}\left(\widehat{H},\eta\right)\widetilde{g}\Big(H^{*},\xi\Big)\right)\label{Hesscurv4},
    \end{align}
    where $X,Y,Z,W\in\Gamma(TM)$, $\xi,\eta\in\Gamma(TM^{\perp})$, $K^{\perp} = \nabla^{\perp} - \widehat{\nabla}^{\perp}$, and
    \begin{align}
        \left(\nabla_X K^{\perp}\right)(Y,\xi) = \nabla^{\perp}_X K^{\perp}_Y\xi - K^{\perp}_{\nabla_X Y}\xi - K^{\perp}_Y \nabla^{\perp}_X \xi,
    \end{align}
    for $X,Y\in\Gamma(TM)$ and $\xi\in\Gamma(TM^{\perp})$.\\
\end{proposition}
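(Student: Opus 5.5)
We compute $\widetilde{\nabla}\widetilde{K}$ along $\iota$ directly from the Gauss and Weingarten formulas \eqref{Gauss}, \eqref{Wein} for $\alpha=1$ and $\alpha=0$, and then substitute the umbilicity relations. The basic identity is
\begin{align}
    \widetilde{K}_{\iota_*X}\iota_*Y = \widetilde{\nabla}_X\iota_*Y - \widetilde{\nabla}^{(0)}_X\iota_*Y = \iota_*K_XY + g(X,Y)\left(H-\widehat{H}\right),
\end{align}
since $B=H\otimes g$ and $\widehat{B}=\widehat{H}\otimes g$; the second holds because $\widehat{B}=(B+B^{*})/2$. Similarly, by \eqref{Wein} and \eqref{umcon} (with $A_\xi X=\widetilde{g}(H^{*},\xi)X$ and $\widehat{A}_\xi X=\widetilde{g}(\widehat{H},\xi)X$),
\begin{align}
    \widetilde{K}_{\iota_*X}\xi = -\widetilde{g}\left(H^{*}-\widehat{H},\xi\right)\iota_*X + K^{\perp}_X\xi .
\end{align}
These two formulas describe $\widetilde{K}$ on all mixed arguments except $\widetilde{K}_\xi\eta$, and that term appears only in \eqref{Hesscurv4}, where it is kept as is.

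Next we expand $(\widetilde{\nabla}_{\iota_*X}\widetilde{K})(\iota_*Y,\iota_*Z)=\widetilde{\nabla}_X(\widetilde{K}_{\iota_*Y}\iota_*Z)-\widetilde{K}_{\widetilde{\nabla}_X\iota_*Y}\iota_*Z-\widetilde{K}_{\iota_*Y}\widetilde{\nabla}_X\iota_*Z$. In each term we replace $\widetilde{\nabla}_X\iota_*Y$ by $\iota_*\nabla_XY+g(X,Y)H$, and $\widetilde{\nabla}_X$ of a normal field by $-\iota_*A_{\cdot}X+\nabla^{\perp}_X(\cdot)$. We then apply the two identities above and take the tangential and normal components.

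The tangential component gives $(\nabla_XK)(Y,Z)$ plus terms of the form $g(\cdot,\cdot)g(\cdot,\cdot)\widetilde{g}(\cdot,\cdot)$:
\begin{itemize}
    \item $-A_{g(Y,Z)(H-\widehat{H})}X$ contributes $g(Y,Z)g(X,W)\widetilde{g}(H^{*},\widehat{H}-H)$;
    \item the two $\widetilde{K}_{H}$-type terms contribute $g(X,Y)\widetilde{g}(H,H^{*}-\widehat{H})Z$ and the same with $Y$ and $Z$ swapped.
\end{itemize}
This yields \eqref{Hesscurv1}.

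The normal component requires derivatives of $g(Y,Z)(H-\widehat{H})$, computed with $\nabla^{\perp}$. Because $\nabla g=2K^\flat$ is symmetric, $Xg(Y,Z)-g(\nabla_XY,Z)-g(Y,\nabla_XZ)=(\nabla_Xg)(Y,Z)=-2g(K_XY,Z)$. Combined with the $\widehat{H}$ and $K^{\perp}$ pieces, this produces the $g(K_XY,Z)\widetilde{g}(H^{*},\xi)$ term. We expect to need the relation $\nabla^{\perp}-\widehat{\nabla}^{\perp}=K^{\perp}$ and the identity $K^{\perp}_X\xi=\widehat{\nabla}^{\perp}_X\xi-\nabla^{\perp*}_X\xi$ up to sign (from $\widehat{\nabla}^{\perp}=(\nabla^{\perp}+\nabla^{\perp*})/2$) to rewrite the $K^{\perp}_Y H$ terms as $\widehat{\nabla}^{\perp}_YH-\nabla^{\perp}_YH$. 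This gives \eqref{Hesscurv2}.

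Formula \eqref{Hesscurv3} should then follow from \eqref{Hesscurv2} without a new expansion. Since $\widetilde{g}(\widetilde{K}_UV,W)$ is totally symmetric and $\widetilde{\nabla}\widetilde{g}=-2\widetilde{K}^\flat$, we get the relation $\widetilde{g}((\widetilde{\nabla}_U\widetilde{K})(V,W),T)=\widetilde{g}((\widetilde{\nabla}_U\widetilde{K})(V,T),W)+\widetilde{g}$-correction terms quadratic in $\widetilde{K}$. Alternatively, we can redo the computation with the second Weingarten identity, using Proposition~\ref{basic1} to convert between $\nabla^{\perp}$, $\nabla^{\perp*}$ and $\widehat{\nabla}^{\perp}$ acting on $H^{*}$ and $\widehat{H}$. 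We will choose the direct computation, because the quadratic corrections are exactly what turn $H$-derivatives into $H^{*}$-derivatives.

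For \eqref{Hesscurv4} we expand $\widetilde{\nabla}_X(\widetilde{K}_{\iota_*Y}\xi)-\widetilde{K}_{\widetilde{\nabla}_X\iota_*Y}\xi-\widetilde{K}_{\iota_*Y}\widetilde{\nabla}_X\xi$ and take the normal component. The $K^{\perp}$ part gives $(\nabla_XK^{\perp})(Y,\xi)$. The term $\widetilde{K}_{g(X,Y)H}\xi$ gives $g(X,Y)\widetilde{g}(\widetilde{K}_\xi\eta,H)$. The cross terms $B(X,-\widetilde{g}(H^{*}-\widehat{H},\xi)Y)$ and $-\widetilde{K}_{\iota_*Y}(-\iota_*A_\xi X)$ combine into the antisymmetric expression in $\xi,\eta$. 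In this combination the $\widetilde{g}(H,\xi)\widetilde{g}(H^{*},\eta)$-type pieces cancel, using $H+H^{*}=2\widehat{H}$.

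The main obstacle is bookkeeping in the normal components of \eqref{Hesscurv2} and \eqref{Hesscurv3}. There, three normal connections appear, and sign conventions for $K^{\perp}$ and $\nabla g$ must be tracked consistently; the identity $H+H^{*}=2\widehat{H}$ is the tool for reconciling the different forms.
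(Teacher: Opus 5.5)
Your plan for \eqref{Hesscurv1}--\eqref{Hesscurv3} is the same direct expansion the paper carries out. It rests on $\widetilde K_{\iota_*X}\iota_*Y=\iota_*K_XY+g(X,Y)(H-\widehat H)$ and $\widetilde K_{\iota_*X}\xi=-\widetilde g(H^*-\widehat H,\xi)\iota_*X+K^\perp_X\xi$, and it goes through, apart from small slips:
\begin{itemize}
\item The sign is $(\nabla_Xg)(Y,Z)=-2g(K_XY,Z)$, not $\nabla g=+2K^\flat$; your later formula has it right.
\item The term $g(K_XY,Z)\widetilde g(H^*,\xi)$ in \eqref{Hesscurv2} comes from $B(X,K_YZ)=g(K_XY,Z)H$ plus $-2g(K_XY,Z)(H-\widehat H)$, using $2\widehat H-H=H^*$. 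No $K^\perp$ is involved, and $-K^\perp_ZH=\widehat\nabla^\perp_ZH-\nabla^\perp_ZH$ is just the definition.
\item For \eqref{Hesscurv3}, which you did not actually write out, the one extra input is that $K^\perp_Y$ is $\widetilde g$-self-adjoint on $TM^\perp$. This follows from Proposition~\ref{basic1}.
\end{itemize}

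The genuine gap is \eqref{Hesscurv4}, where two of your asserted steps are false.

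First, the $\widetilde K_\xi\eta$ term comes from $-\widetilde K_{\widetilde\nabla_X\iota_*Y}\xi$, which contains $-g(X,Y)\widetilde K_H\xi$. It therefore contributes $-g(X,Y)\widetilde g(\widetilde K_\xi\eta,H)$, not $+$.

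Second, the cross terms do not combine into the antisymmetric expression. The normal part of $\widetilde\nabla_X(-\widetilde g(H^*-\widehat H,\xi)\iota_*Y)$ gives $-g(X,Y)\widetilde g(H^*-\widehat H,\xi)\widetilde g(H,\eta)$. The piece $\widetilde g(H^*,\xi)\widetilde K_{\iota_*Y}\iota_*X$ gives $g(X,Y)\widetilde g(H^*,\xi)\widetilde g(H-\widehat H,\eta)$. Their sum is $g(X,Y)\bigl(\widetilde g(\widehat H,\xi)\widetilde g(H,\eta)-\widetilde g(H^*,\xi)\widetilde g(\widehat H,\eta)\bigr)$, which is symmetric in $(\xi,\eta)$ because $H+H^*=2\widehat H$.

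This symmetry is forced. The identity you invoked for \eqref{Hesscurv3},
\[
\widetilde g((\widetilde\nabla_X\widetilde K)(U,V),W)-\widetilde g((\widetilde\nabla_X\widetilde K)(U,W),V)=2\widetilde g([\widetilde K_X,\widetilde K_U]V,W),
\]
shows that the antisymmetric part of the left side of \eqref{Hesscurv4} is $2\widetilde g([K^\perp_X,K^\perp_Y]\xi,\eta)$. That is exactly the antisymmetric part of $\widetilde g((\nabla_XK^\perp)(Y,\xi),\eta)$, so no further antisymmetric term can appear. Done honestly, the computation gives
\begin{align*}
\widetilde{g}\left(\left(\widetilde{\nabla}_{\iota_*X}\widetilde{K}\right)(\iota_*Y,\xi),\eta\right)
={}&\widetilde{g}\left(\left(\nabla_XK^{\perp}\right)(Y,\xi),\eta\right)-g(X,Y)\,\widetilde{g}\left(\widetilde{K}_{\xi}\eta,H\right)\\
&+g(X,Y)\left(\widetilde{g}(\widehat{H},\xi)\,\widetilde{g}(H,\eta)-\widetilde{g}(H^{*},\xi)\,\widetilde{g}(\widehat{H},\eta)\right).
\end{align*}

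\eqref{Hesscurv4} as displayed is in fact false. Take $\Delta^n\subset((\mathbb{R}^+)^{n+1},g_0,D)$ with unit normal $\boldsymbol{n}=\sum_i\eta^i\frac{\partial}{\partial\eta^i}$. Then:
\begin{itemize}
\item $D\widetilde K=0$, since the ambient space has CHC $0$;
\item $H^*=0$, $H=-\boldsymbol{n}$ and $\widehat H=-\frac12\boldsymbol{n}$;
\item $D_X\boldsymbol{n}=0$, so $\nabla^\perp\boldsymbol{n}=\widehat\nabla^\perp\boldsymbol{n}=0$ and $K^\perp=0$;
\item $\widetilde K_{\boldsymbol{n}}\boldsymbol{n}=-\frac12\boldsymbol{n}$.
\end{itemize}
For a unit tangent $X=Y$ and $\xi=\eta=\boldsymbol{n}$, the left side is $0$ and the displayed right side is $\frac12$. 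The corrected formula gives $-\frac12+\frac12=0$.

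The paper's own proof follows your route; its final line already has the symmetric quadratic term above, not the one in the statement, but it keeps the wrong sign $+g(X,Y)\widetilde g(\widetilde K_\xi\eta,H)$. \eqref{Hesscurv4} is not used later in the paper, so nothing downstream breaks. Still, you cannot prove it as stated, and by asserting the terms ``combine into the antisymmetric expression'' you steered toward the target instead of computing it.
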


\begin{proof}
    We prove the equations one by one.
    For $X,Y,Z,W\in\Gamma(TM)$, we have
    \begin{align}
        \widetilde{g}\left(\left(\widetilde{\nabla}_{\iota_*X}\widetilde{K}\right)(\iota_*Y,\iota_*Z),\iota_*W\right) =& \widetilde{g}\left(\widetilde{\nabla}_X \widetilde{K}_{\iota_*Y}\iota_*Z - \widetilde{K}_{\widetilde{\nabla}_X \iota_* Y}\iota_*Z - \widetilde{K}_{\iota_*Y} \widetilde{\nabla}_X\iota_*Z, \iota_*W \right)\\
        =&\widetilde{g}\left(\widetilde{\nabla}_X\left(\iota_* K_Y Z + B(Y,Z) - \widehat{B}(Y,Z)\right),\iota_*W\right)\\
        -\widetilde{g}&\left(\widetilde{K}_{\iota_* Z}\left(\iota_* \nabla_X Y + B(X,Y)\right),\iota_*W\right)\\
        -\widetilde{g}&\left(\widetilde{K}_{\iota_*Y}\left(\iota_*\nabla_X Z + B(X,Z)\right),\iota_*W\right)\\
        =&\widetilde{g}\left(\iota_*\nabla_X K_Y Z - \iota_*A_{B(Y,Z)}X + \iota_*A_{\widehat{B}(Y,Z)}X,\iota_*W\right)\\
        &-\widetilde{g}\left(\iota_*K_{Z}\nabla_X Y-\iota_*A_{B(X,Y)}Z+\iota_*\widehat{A}_{B(X,Y)}Z,\iota_*W\right)\\
        &-\widetilde{g}\left(\iota_*K_{Y}\nabla_X Z-\iota_*A_{B(X,Z)}Y+\iota_*\widehat{A}_{B(X,Z)}Y,\iota_*W\right)\\
        =&g((\nabla_{X}K),(Y,Z)) + g(X,W)g(Y,Z)\widetilde{g}\left(H^{*},\widehat{H}-H\right)\\
        &+ g(X,Y)g(Z,W)\widetilde{g}\left(H,H^{*}-\widehat{H}\right)\\
        &+ g(X,Z)g(Y,W)\widetilde{g}\left(H,H^{*}-\widehat{H}\right),
    \end{align}
    where we used \eqref{umcon} to obtain the last row.\\

    For $X,Y,Z\in\Gamma(TM)$ and $\xi\in\Gamma(TM^{\perp})$, we have
    \begin{align}
        \widetilde{g}\left(\left(\widetilde{\nabla}_{\iota_*X}\widetilde{K}\right)(\iota_*Y,\iota_*Z),\xi\right) =& \widetilde{g}\left(\widetilde{\nabla}_X\left(\iota_* K_Y Z + B(Y,Z) - \widehat{B}(Y,Z)\right),\xi\right)\\
        -\widetilde{g}&\left(\widetilde{K}_{\iota_* Z}\left(\iota_* \nabla_X Y + B(X,Y)\right) + \widetilde{K}_{\iota_*Y}\left(\iota_*\nabla_X Z + B(X,Z)\right),\xi\right)\\
        =& \widetilde{g}\left(B(X,K_Y Z) + \nabla^{\perp}_XB(Y,Z) - \nabla^{\perp}_X\widehat{B}(Y,Z),\xi\right)\\
        &-\widetilde{g}\left(B(Z,\nabla_X Y) - \widehat{B}(Z,\nabla_X Y)+\nabla^{\perp}_Z B(X,Y) - \widehat{\nabla}^{\perp}_Z B(X,Y),\xi\right)\\
        &-\widetilde{g}\left(B(Y,\nabla_X Z) - \widehat{B}(Y,\nabla_X Z)+\nabla^{\perp}_Y B(X,Z) - \widehat{\nabla}^{\perp}_Y B(X,Z),\xi\right)\\
        =& g(X,K_Y Z)\widetilde{g}\left(H,\xi\right)\\
        &-(Xg(Y,Z)-g(\nabla_X Y,Z)-g(Y,\nabla_X Z))\widetilde{g}\left(\widehat{H}-H,\xi\right)\\
        &+g(Y,Z)\widetilde{g}\left(\nabla^{\perp}_X H - \nabla^{\perp}_X \widehat{H},\xi\right) - g(X,Y)\widetilde{g}\left(\nabla_Z^{\perp}H - \widehat{\nabla}_Z^{\perp}H,\xi\right)\\
        &-g(X,Z)\widetilde{g}\left(\nabla_Y^{\perp}H - \widehat{\nabla}_Y^{\perp}H,\xi\right)\\
        =& g(K_X Y,Z)\widetilde{g}\left(H,\xi\right)\\
        &+ 2g(K_X Y,Z)\widetilde{g}\left(\widehat{H}-H,\xi\right) + g(Y,Z)\widetilde{g}\left(\nabla^{\perp}_X H - \nabla^{\perp}_X \widehat{H},\xi\right)\\
        &+g(X,Y)\widetilde{g}\left(\widehat{\nabla}_Z^{\perp}H - \nabla_Z^{\perp}H,\xi\right) + g(X,Z)\widetilde{g}\left(\widehat{\nabla}_Y^{\perp}H - \nabla_Y^{\perp}H,\xi\right),
    \end{align}
    and since we have $H^{*} = 2\widehat{H} - H$, we obtain \eqref{Hesscurv2}.
    For $X,Y,Z\in\Gamma(TM)$ and $\xi\in\Gamma(TM^{\perp})$, we have
    \begin{align}
        \widetilde{g}\left(\left(\widetilde{\nabla}_{\iota_*X}\widetilde{K}\right)(\iota_*Y,\xi),\iota_*Z\right) =& \widetilde{g}\left(\widetilde{\nabla}_X \widetilde{K}_{\iota_*Y}\xi - \widetilde{K}_{\widetilde{\nabla}_X \iota_* Y}\xi - \widetilde{K}_{\iota_*Y} \widetilde{\nabla}_X\xi, \iota_*Z \right)\\
        =&\widetilde{g}\left(\widetilde{\nabla}_X\left(-\iota_*A_{\xi}Y+\iota_*\widehat{A}_{\xi}Y + \nabla^{\perp}_Y\xi-\widehat{\nabla}^{\perp}_Y\xi\right),\iota_* Z\right)\\
        -\widetilde{g}&\left(\widetilde{K}_{\xi}\left(\iota_*\nabla_X Y+B(X,Y)\right)+\widetilde{K}_{\iota_*Y}\left(-\iota_*A_{\xi}X+\nabla^{\perp}_X\xi\right),\iota_*Z\right)\\
        =&\widetilde{g}\left(\widetilde{\nabla}_X\left(-\widetilde{g}\left(H^{*}-\widehat{H},\xi\right)\iota_*Y\right) - \iota_*A_{\nabla^{\perp}_Y\xi}X+\iota_*A_{\widehat{\nabla}^{\perp}_Y\xi}X,\iota_*Z\right)\\
        +\widetilde{g}&\left(\iota_*A_\xi\nabla_X Y - \iota_*\widehat{A}_\xi\nabla_X Y - \widetilde{K}_{\xi}B(X,Y),\iota_*Z\right)\\
        + \widetilde{g}&\left(\widetilde{g}\left(H^{*},\xi\right)\widetilde{K}_{\iota_*Y}\iota_*X + \iota_*A_{\nabla^{\perp}_X\xi}Y - \iota_*\widehat{A}_{\nabla^{\perp}_X\xi}Y,\iota_*Z\right)\\
        =\widetilde{g}&\left(-X\widetilde{g}\left(H^{*}-\widehat{H},\xi\right)\iota_*Y - \widetilde{g}\left(H^{*},\nabla^{\perp}_Y\xi-\widehat{\nabla}^{\perp}_Y\xi\right)\iota_*X,\iota_*Z\right)\\
        &-\widetilde{g}\left(\widetilde{K}_{\xi}B(X,Y),\iota_*Z\right)\\
        &+\widetilde{g}\left(\widetilde{g}\left(H^{*},\xi\right)\iota_*K_X Y + \widetilde{g}\left(H^{*}-\widehat{H},\nabla^{\perp}_X\xi\right)\iota_*Y,\iota_*Z\right).
    \end{align}
    Here, it holds that
    \begin{align}
        \widetilde{g}\left(H^{*},\nabla^{\perp}_Y\xi-\widehat{\nabla}^{\perp}_Y\xi\right) &= Y\widetilde{g}\left(H^{*},\xi\right) - \widetilde{g}\left(\nabla^{\perp*}_YH^{*},\xi\right)\\
        &-Y\widetilde{g}\left(H^{*},\xi\right) + \widetilde{g}\left(\nabla^{\perp*}_YH^{*},\xi\right)\\
        &= \widetilde{g}\left(\widehat{\nabla}^{\perp}_YH^{*}-\nabla^{\perp*}_YH^{*},\xi\right),
    \end{align}
    and
    \begin{align}
        \widetilde{g}\left(\widetilde{K}_{\xi}B(X,Y),\iota_*Z\right) =& g(X,Y)\widetilde{g}\left(\widetilde{K}_{\iota_*Z}H,\xi\right)\\
        =& g(X,Y)\widetilde{g}\left(\nabla^{\perp}_Z H - \widehat{\nabla}^{\perp}_Z H,\xi\right),
    \end{align}
    thus we obtain \eqref{Hesscurv3}.
    Lastly, to prove \eqref{Hesscurv4}, for $X,Y\in\Gamma(TM)$ and $\xi,\eta\in\Gamma(TM^{\perp})$ we have
    \begin{align}
        \widetilde{g}\left(\left(\widetilde{\nabla}_{\iota_*X}\widetilde{K}\right)(\iota_*Y,\xi),\eta\right) =& \widetilde{g}\left(\widetilde{\nabla}_X\left(\widetilde{g}\left(\widehat{H}-H^{*},\xi\right)Y + K^{\perp}_Y\xi\right),\eta\right)\\
        -\widetilde{g}&\left(\widetilde{K}_{\xi}\left(\iota_*\nabla_X Y+B(X,Y)\right)+\widetilde{K}_{\iota_*Y}\left(-\widetilde{g}\left(H^{*},\xi\right)X+\nabla^{\perp}_X\xi\right),\eta\right)\\
        =& \widetilde{g}\left(\widehat{H}-H^{*},\xi\right)\widetilde{g}\left(B(X,Y),\eta\right) + \widetilde{g}\left(\nabla_X^{\perp}K^{\perp}_Y\xi,\eta\right)\\
        &-\widetilde{g}\left(\nabla^{\perp}_{\nabla_X Y}\xi-\widehat{\nabla}^{\perp}_{\nabla_X Y}\xi,\eta\right) + g(X,Y)\widetilde{g}\left(\widetilde{K}_{\xi}\eta,H\right)\\
        &+g(X,Y)\widetilde{g}\left(H^{*},\xi\right)\widetilde{g}\left(H-\widehat{H},\eta\right) - \widetilde{g}\left(K^{\perp}_Y\nabla^{\perp}_X\xi,\eta\right)\\
        =& g(X,Y)\left(\widetilde{g}\left(\widehat{H},\xi\right)\widetilde{g}\Big(H,\eta\Big) - \widetilde{g}\Big(H^{*},\xi\Big)\widetilde{g}\left(\widehat{H},\eta\right)\right) \\
        &+ \widetilde{g}\left(\left(\nabla_{X}K^{\perp}\right)(Y,\xi),\eta\right) +  g(X,Y)\widetilde{g}\left(\widetilde{K}_{\xi}\eta,H\right).
    \end{align}
\end{proof}

\begin{cor}
    \label{hesssub}
    If $\iota:(M,g,\nabla)\to(\widetilde{M},\widetilde{g},\widetilde{\nabla})$ is a statistical immersion, where $M$ is a doubly totally-umbilical submanifold and $(\widetilde{M},\widetilde{g},\widetilde{\nabla})$ is a Hessian manifold of CHC $\widetilde{c}$, then the following conditions are equivalent$:$
    \renewcommand{\labelenumi}{$\operatorname{(\theenumi)}$}
    \begin{enumerate}
        \item The statistical manifold $(M,g,\nabla)$ is a Hessian manifold of CHC $c=\widetilde{c} - 4\widetilde{g}(\widehat{H},\widehat{H})$.
        \item The submanifold $M$ is $\nabla^{*}$-autoparallel.
    \end{enumerate}
\end{cor}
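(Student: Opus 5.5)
The plan is to read off both directions from the tangential formula \eqref{Hesscurv1}, combined with Proposition~\ref{conjandcon}~$(2)$ for flatness.

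\emph{Flatness.} A Hessian manifold is flat, so $\widetilde{R}=0$ and $(\widetilde{M},\widetilde{g},\widetilde{\nabla})$ has constant curvature $\widetilde{k}=0$. By Proposition~\ref{conjandcon}~$(2)$, $(M,g,\nabla)$ then has constant curvature $k=\widetilde{g}(H,H^{*})$, where this function is constant. Hence $\nabla$ is flat if and only if $\widetilde{g}(H,H^{*})=0$.

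\emph{Hessian curvature.} Next substitute the CHC condition \eqref{chc} for $\widetilde{\nabla}\widetilde{K}$ into the left-hand side of \eqref{Hesscurv1}. The tangential part of $\widetilde{K}$ along $M$ is $K$, so the left-hand side equals $-\frac{\widetilde{c}}{2}\bigl(g(X,Y)g(Z,W)+g(X,Z)g(Y,W)\bigr)$. Use $H^{*}=2\widehat{H}-H$ to rewrite $\widehat{H}-H=H^{*}-\widehat{H}=\frac12(H^{*}-H)$. Then \eqref{Hesscurv1} becomes
\begin{align}
g\left((\nabla_XK)(Y,Z),W\right) =& -\Bigl(\frac{\widetilde{c}}{2}+\frac12\widetilde{g}(H,H^{*}-H)\Bigr)\Bigl(g(X,Y)g(Z,W)+g(X,Z)g(Y,W)\Bigr)\\
&-\frac12\widetilde{g}(H^{*},H^{*}-H)\,g(X,W)g(Y,Z).
\end{align}
The last term does not have the CHC shape. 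Since $m\ge2$, I can take an orthonormal pair with $X=W$ and $Y=Z$. For such a pair the CHC expression vanishes, while the extra term equals $-\frac12\widetilde{g}(H^{*},H^{*}-H)$. So $\nabla K$ has the form \eqref{chc} if and only if $\widetilde{g}(H^{*},H^{*})=\widetilde{g}(H,H^{*})$. In that case the constant is $c=\widetilde{c}+\widetilde{g}(H,H^{*}-H)$.

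\emph{(1)$\Rightarrow$(2).} If $(M,g,\nabla)$ is Hessian with CHC, then flatness gives $\widetilde{g}(H,H^{*})=0$. The previous step then gives $\widetilde{g}(H^{*},H^{*})=0$, so $H^{*}=0$. Double total umbilicity gives $B^{*}=H^{*}\otimes g=0$, so $M$ is $\nabla^{*}$-autoparallel.

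\emph{(2)$\Rightarrow$(1).} If $B^{*}=0$, then $H^{*}=0$. Both conditions above hold trivially, so $\nabla$ is flat and $M$ has CHC $c=\widetilde{c}-\widetilde{g}(H,H)$. Since $\widehat{H}=\frac12(H+H^{*})=\frac12H$, we get $\widetilde{g}(H,H)=4\widetilde{g}(\widehat{H},\widehat{H})$, which gives the stated value of $c$.

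The main obstacle is the bookkeeping in \eqref{Hesscurv1}. One has to identify which coefficient spoils the CHC form and to check that the orthonormal test pair really isolates it. Flatness must also be kept as a separate requirement: without it one only obtains $\widetilde{g}(H^{*},H^{*})=\widetilde{g}(H,H^{*})$, not $H^{*}=0$.
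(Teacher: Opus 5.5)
Your proof is correct, and for (1)$\Rightarrow$(2) it takes a genuinely shorter route than the paper.

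\textbf{Where the two arguments agree.} Both begin the same way: testing \eqref{Hesscurv1} on an orthonormal pair with $X=W$, $Y=Z$ gives $\widetilde g(H^*,H^*)=\widetilde g(H,H^*)$.

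\textbf{The paper's continuation.} From there the paper never uses that $\nabla$ is flat. It notes that $\|H^*\|_{\widetilde g}^2$ is constant. It then inserts $\xi=H^*$ into the mixed formula \eqref{Hesscurv3}, which also needs the normal-connection identities of the type \eqref{parmean}, and obtains $g(K_YY,X)\|H^*\|_{\widetilde g}^2=0$. Finally it splits into the cases $H^*=0$ and $K=0$. The case $K=0$ is settled by inserting the prescribed value $c=\widetilde c-4\widetilde g(\widehat H,\widehat H)$ into \eqref{Hesscurv1} at $X=Y=Z=W$.

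\textbf{Your continuation.} You use the Gauss equation through Proposition~\ref{conjandcon}~(2) with $\widetilde k=0$. Flatness of $\nabla$ forces $\widetilde g(H,H^*)=0$, and the first relation then finishes the proof at once.

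\textbf{What each route buys.} Your route avoids \eqref{Hesscurv3}, the normal-connection bookkeeping and the case split. It also shows that (2) follows whenever $(M,g,\nabla)$ is Hessian with CHC of any value. The paper's route shows something different: the identity \eqref{chc} with the prescribed $c$ forces (2) even without assuming that $\nabla$ is flat.

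\textbf{Your closing remark is too strong.} Compare the coefficient at $X=Y=Z=W$ in your own display with $c=\widetilde c-4\widetilde g(\widehat H,\widehat H)=\widetilde c-\|H+H^*\|_{\widetilde g}^2$. This gives $3\widetilde g(H,H^*)+\widetilde g(H^*,H^*)=0$. Together with $\widetilde g(H^*,H^*)=\widetilde g(H,H^*)$, it yields $H^*=0$ without any use of flatness.

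\textbf{The converse.} For (2)$\Rightarrow$(1) your argument is the paper's. You are more careful in one respect: you check flatness of $\nabla$, which the paper's one-line appeal to \eqref{Hesscurv1} omits. However, neither argument checks that $c=\widetilde c-\widetilde g(H,H)$ is actually a constant, as \eqref{chc} requires. This is easy to supply. $(\widetilde M,\widetilde g)$ has constant curvature $-\widetilde c/4$, and $M$ is totally umbilical in it with mean curvature $\widehat H$. Proposition~\ref{conjandcon}~(2), applied to the Levi-Civita structures, then makes $\widetilde g(\widehat H,\widehat H)=\frac14\widetilde g(H,H)$ constant.
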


\begin{proof}
    Assume that $(M,g,\nabla)$ is a Hessian manifold of CHC $c=\widetilde{c} - 4\widetilde{g}(\widehat{H},\widehat{H})$.
    Since equation \eqref{chc} holds for $\nabla K$ and $\widetilde{\nabla}\widetilde{K}$, for an orthonormal pair $\{X,Y\}$ on $(M,g)$ we have
    \begin{align}
        \label{ceq1}
        0 = g((\nabla_X K)(Y,Y),X) = \widetilde{g}\left(H^{*},H-\widehat{H}\right),
    \end{align}
    by taking $\xi=H-\widehat{H}$ in \eqref{Hesscurv1}.
    Consequently, 
        \begin{align}
            \label{sizestar}
            \|H^*\|_{\widetilde{g}}^2 = \widetilde{g}\left(H^*,H\right) - 2\widetilde{g}\left(H^*,H-\widehat{H}\right)
        \end{align}
        is a constant function since $\widetilde{g}\left(H^*,H\right)$ is also constant.
        Thus, for each orthonormal pair $\{X,Y\}$ on $(M,g)$, from equation \eqref{Hesscurv3} we have
        \begin{align*}
            0 = g(K_Y Y,X)\widetilde{g}(H^*,H^*) - \widetilde{g}\left(\widehat{\nabla}_X^{\perp}H^*,H^*\right) = \widetilde{g}(K_Y Y,X)\|H^*\|_{\widetilde{g}}^2
        \end{align*}
        by taking $\xi = H^*$, which implies that $H^*=0$ or $K=0$.
        If $H^*=0$, then $M$ is a $\nabla^{*}$-autoparallel submanifold.
        If $K=0$, then $(M,g,\nabla)$ is a Hessian manifold of CHC $0$, which implies $\widetilde{c}=4\widetilde{g}(\widehat{H},\widehat{H})$.
        Here, equation \eqref{Hesscurv1} yields
        \begin{align}
            -4\widetilde{g}(\widehat{H},\widehat{H}) &= \widetilde{g}\left(\left(\widetilde{\nabla}_{\iota_*X}\widetilde{K}\right)\left(\iota_*X,\iota_*X\right),\iota_*X\right)\\
            &= 2\widetilde{g}\left(H,H^*-\widehat{H}\right)
        \end{align}
        for any unit vectors $X$ on $(M,g)$.
        We also have $\widetilde{g}(H^*,H)=\widetilde{g}(H^*,\widehat{H})$ from \eqref{ceq1}, hence
        \begin{align}
            0 &= 2\widetilde{g}(\widehat{H},\widehat{H}) + \widetilde{g}\left(H,H^*-\widehat{H}\right)\\
            &= \widetilde{g}(H,H^*) + \widetilde{g}(H^*,\widehat{H})\\
            &= 2\widetilde{g}(H,H^*),
        \end{align}
        which implies $H^*=0$ from \eqref{sizestar}, and again, $M$ is $\nabla^{*}$-autoparallel.
        The converse is follows immediately from \eqref{Hesscurv1}.
\end{proof}

\begin{example}
    Consider the doubly totally-umbilical submanifold $M = \{p\in\Delta^n\mid p(\omega)=b\}$ of the probability simplex $(\Delta^n,g^F,\nabla^{(\mathrm{e})})$ in Example~\ref{umprob}.
    The submanifold $M$ is also a $\nabla^{(\mathrm{m})}$-autoparallel submanifold.
    Since the probability simplex $(\Delta^n,g^F,\nabla^{(\mathrm{e})})$ has CHC $-1$, Corollary~\ref{hesssub} implies that $(M,g,\nabla)$ also a Hessian manifold of CHC.
    The Hessian curvature of $(M,g,\nabla)$ is $-(1-b)^{-1}$.
\end{example}

\section{Complete classification of doubly totally-umbilical submanifolds in the probability simplex}
We give a complete classification of doubly totally-umbilical submanifolds in the probability simplex.
From here on, we only consider submanifolds $M\subset\widetilde{M}$ of the ambient space and not immersed ones.\\
\begin{lemma}
    \label{umbequi}
    Let $M$ be a submanifold of $\Delta^n$.
    Then, the submanifold $M$ is a doubly totally-umbilical one of $(\Delta^n,g^F,\nabla^{\mathrm{(e)}})$ if and only if $\iota(M)$ is a doubly totally-umbilical submanifold of $((\mathbb{R}^+)^{n+1},g_0,D)$ where $\iota:\Delta^n\to(\mathbb{R}^+)^{n+1}$ is the embedding map in Example $\mathrm{\ref{probsimpemb}}$.
\end{lemma}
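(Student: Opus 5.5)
The plan is to compare the second fundamental forms of the two nested immersions $M\subset\Delta^n\subset(\mathbb{R}^+)^{n+1}$ via the composition $\iota|_M$. Write $B^{(\alpha)}_{M,\Delta}$, $B^{(\alpha)}_{\Delta,\mathbb{R}}$ and $B^{(\alpha)}_{M,\mathbb{R}}$ for the second fundamental forms of $M$ in $\Delta^n$, of $\Delta^n$ in $(\mathbb{R}^+)^{n+1}$, and of $M$ in $(\mathbb{R}^+)^{n+1}$, all with respect to the $\alpha$-connections. The statistical structure induced on $M$ from $(g_0,D)$ through $\iota$ coincides with the one induced from $(g^F,\nabla^{(\mathrm{e})})$, because the latter is itself induced by $(g_0,D)$ (Example~\ref{probsimpemb}). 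Therefore, for $X,Y\in\Gamma(TM)$, applying the Gauss formula \eqref{Gauss} twice gives
\begin{align}
D^{(\alpha)}_X\iota_*Y = \iota_*\left(\nabla^{(\mathrm{\alpha})}_XY\right) + \iota_*B^{(\alpha)}_{M,\Delta}(X,Y) + B^{(\alpha)}_{\Delta,\mathbb{R}}(X,Y).
\end{align}
The normal bundle of $M$ in $(\mathbb{R}^+)^{n+1}$ splits $g_0$-orthogonally as $\iota_*TM^{\perp_\Delta}\oplus T\Delta^{\perp}|_M$, so
\begin{align}
B^{(\alpha)}_{M,\mathbb{R}} = \iota_*B^{(\alpha)}_{M,\Delta} + B^{(\alpha)}_{\Delta,\mathbb{R}}|_{TM\times TM}.
\end{align}

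Next I use Example~\ref{probsimpisdtu}, which says that $\Delta^n$ is doubly totally-umbilical in $(\mathbb{R}^+)^{n+1}$. Restricting to $M$ gives $B^{(\pm1)}_{\Delta,\mathbb{R}}|_{TM\times TM}=H^{(\pm1)}_{\Delta,\mathbb{R}}\otimes g$. Because the two summands of $B^{(\pm1)}_{M,\mathbb{R}}$ lie in complementary orthogonal subbundles, the first summand $\iota_*B^{(\pm1)}_{M,\Delta}$ is the projection of $B^{(\pm1)}_{M,\mathbb{R}}$ onto $\iota_*TM^{\perp_\Delta}$. Taking $g$-traces, the mean curvatures satisfy $H^{(\pm1)}_{M,\mathbb{R}}=\iota_*H^{(\pm1)}_{M,\Delta}+H^{(\pm1)}_{\Delta,\mathbb{R}}$.

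The two directions then follow. If $B^{(\pm1)}_{M,\Delta}=H^{(\pm1)}_{M,\Delta}\otimes g$, then $B^{(\pm1)}_{M,\mathbb{R}}=(\iota_*H^{(\pm1)}_{M,\Delta}+H^{(\pm1)}_{\Delta,\mathbb{R}})\otimes g=H^{(\pm1)}_{M,\mathbb{R}}\otimes g$. Conversely, if $B^{(\pm1)}_{M,\mathbb{R}}=H^{(\pm1)}_{M,\mathbb{R}}\otimes g$, projecting onto $\iota_*TM^{\perp_\Delta}$ gives $B^{(\pm1)}_{M,\Delta}=H'\otimes g$ for some normal field $H'$, and taking the trace identifies $H'=H^{(\pm1)}_{M,\Delta}$. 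It suffices to check the cases $\alpha=\pm1$, or equivalently any two distinct values of $\alpha$ by Remark~\ref{umrem}.

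The main obstacle is the bookkeeping in the first step. One must justify that the induced connection on $\iota^*T\Delta^n$, composed with $\iota_*$, agrees with the tangential part of $D^{(\alpha)}$ along $M$, which is exactly the Gauss formula for $\Delta^n$ restricted to directions tangent to $M$. One must also make sure the orthogonal splitting used for $D^{(\alpha)}$ is the $g_0$-orthogonal one for every $\alpha$. Both hold because all $\alpha$-structures share the same metric, so the normal bundles do not depend on $\alpha$.
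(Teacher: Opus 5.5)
Your proposal is correct and takes the same route as the paper. Both apply the Gauss formula twice along $M\subset\Delta^n\subset(\mathbb{R}^+)^{n+1}$ and use that $\Delta^n$ is doubly totally-umbilical, which gives the second fundamental form of $M$ in $(\mathbb{R}^+)^{n+1}$ as $\iota_*B_{M,\Delta}+g\otimes\widetilde{H}$; you also write out the orthogonal-projection and trace argument for the converse and handle both $\alpha=\pm1$, which the paper leaves implicit.
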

\begin{proof}
    The probability simplex $(\Delta^n,g^F,\nabla^{(\mathrm{e})})$ is a doubly totally-umbilical statistical submanifold of $((\mathbb{R}^+)^{n+1},g_0,D)$.
    If we let $\iota_0:M\to\Delta^n$ the inclusion map, $(g,\nabla)$ the induced statistical structure on $M$, $B$ the second fundamental form of $\iota_0$ with respect to $\nabla^{(\mathrm{e})}$, and $\widetilde{H}$ the mean curvature of $\iota$ with respect to $D$, we have
    \begin{align*}
        D_X(\iota\circ\iota_0)_*Y &= \iota_*(\nabla^{(\mathrm{e})}_X\iota_{0*}Y) + g^F(\iota_{0*}X,\iota_{0*}Y)\widetilde{H}\\
        &= (\iota\circ\iota_0)_*(\nabla_X Y) + \iota_*B(X,Y) + g(X,Y)\widetilde{H},
    \end{align*}
    thus the equivalence holds.\\
\end{proof}
From Lemma \ref{umbequi}, it is sufficient to classify the doubly totally-umbilical submanifolds in the denormalized state space first.

\subsection{Doubly totally-umbilical submanifolds in the denormalized state space}
We first classify doubly autoparallel submanifolds in the denormalized state space.\\

\begin{proposition}
    \label{dapdenorm}
    A submanifold $M^m$ of the denormalized state space $((\mathbb{R}^+)^n,g_0,D)$ is doubly autoparallel if and only if $M$ is contained in 
    \begin{equation}
        \label{dapde}
        \mathcal{P}^{m} = \left\{(\eta^1,\ldots,\eta^n)\in(\mathbb{R}^+)^n \mid \eta^{i_k} = a_{i_k}\eta^{\phi(i_k)},\,\eta^{j_l} = b_{j_l}\right\}
    \end{equation}
    where $i_1,\ldots,i_{\mathcal{A}},j_1,\ldots,j_{\mathcal{B}}\in\{1,\ldots,n\}$ are $\mathcal{A}+\mathcal{B}=n-m$ distinct integers, $\phi:\{i_1,\ldots,i_{\mathcal{A}}\}\to(\{1,\ldots,n\}\setminus\{i_1,\ldots,i_{\mathcal{A}},j_1,\ldots,j_{\mathcal{B}}\})$, and $a_{i_1},\ldots,a_{i_{\mathcal{A}}},b_{j_1},\ldots,b_{j_{\mathcal{B}}}\in\mathbb{R}^+$.
\end{proposition}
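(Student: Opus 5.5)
The plan is to work in two global affine coordinate systems at once. By Example~\ref{probsimpemb}, $(\eta^1,\ldots,\eta^n)$ is a $D^*$-affine coordinate system on $(\mathbb{R}^+)^n$. A direct check shows that $\theta^i=\log\eta^i$ is $D$-affine: since $\frac{\partial}{\partial\theta^i}=\eta^i\frac{\partial}{\partial\eta^i}$, we get $D_{\partial_{\theta^i}}\partial_{\theta^j}=\delta_{ij}\eta^i\bigl(\partial_{\eta^i}-\eta^i\cdot\frac{1}{\eta^i}\partial_{\eta^i}\bigr)=0$. In flat coordinates, a connected submanifold is autoparallel if and only if it is an open piece of an affine subspace. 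Hence $M$ is doubly autoparallel if and only if $M$ is an open subset of an $m$-dimensional affine subspace $p+L$ in the $\eta$-coordinates whose image under $\log$ is also an open piece of an affine subspace $q+L'$. Here $L,L'\subset\mathbb{R}^n$ are fixed linear subspaces; they are globally constant because $M$ is connected.

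\emph{Sufficiency.} Fix a set $\mathcal{P}^m$ as in \eqref{dapde}. Its defining equations $\eta^{i_k}-a_{i_k}\eta^{\phi(i_k)}=0$ and $\eta^{j_l}=b_{j_l}$ are affine in $\eta$. The equivalent equations $\theta^{i_k}-\theta^{\phi(i_k)}=\log a_{i_k}$ and $\theta^{j_l}=\log b_{j_l}$ are affine in $\theta$. These $n-m$ equations are independent, because the $i_k,j_l$ are distinct and $\phi$ takes values outside $\{i_1,\ldots,i_{\mathcal A},j_1,\ldots,j_{\mathcal B}\}$. So $\mathcal{P}^m$ is an $m$-dimensional submanifold that is both $D$- and $D^*$-autoparallel. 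Any $m$-dimensional $M$ contained in it is an open subset, hence also doubly autoparallel.

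\emph{Necessity.} The key step is an algebraic reformulation. At a point $\eta\in M$ the two tangent descriptions are related by $d\theta(v)=v/\eta$ (componentwise division), so $L'=\eta^{-1}\circ L$ for every $\eta\in M$, where $\circ$ is the Hadamard product. Take $v,w\in L$ and differentiate the constant space $\eta^{-1}\circ L$ along $w$. The derivative of $\eta^{-1}\circ v$ is $-\eta^{-2}\circ w\circ v$, and it must lie in $L'$. Writing $u=\eta^{-1}\circ v$ and $u'=\eta^{-1}\circ w$, this says $u\circ u'\in L'$. So $L'$ is a subalgebra of $(\mathbb{R}^n,\circ)$. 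Such a subalgebra is commutative, finite-dimensional and has no nonzero nilpotents, so it is spanned by idempotents. Idempotents of $(\mathbb{R}^n,\circ)$ are indicator vectors $\mathbf{1}_C$, and taking products and differences of them shows that $L'=\mathrm{span}\{\mathbf{1}_{C_1},\ldots,\mathbf{1}_{C_m}\}$ for pairwise disjoint nonempty $C_k\subset\{1,\ldots,n\}$.

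Fixing $p\in M$, we then have $L=p\circ L'$, so points of $M$ have the form $\eta=p+\sum_k t_k\,p\circ\mathbf{1}_{C_k}$. Thus $\eta^i=(1+t_k)p^i$ for $i\in C_k$, and $\eta^j=p^j$ for $j\notin\bigcup_k C_k$. In each block $C_k$ choose a representative $r_k$, put $\phi(i)=r_k$ and $a_i=p^i/p^{r_k}$ for the other $i\in C_k$, and put $b_j=p^j$ for $j$ outside all blocks. Then $M\subset\mathcal{P}^m$, and the count is $\mathcal{A}+\mathcal{B}=\sum_k(|C_k|-1)+\bigl(n-\sum_k|C_k|\bigr)=n-m$.

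I expect the main obstacle to be making the subalgebra step rigorous. One must show that $\eta^{-1}\circ L$ is genuinely independent of $\eta\in M$: it is the tangent space of $\log(M)$, which is constant because $\log(M)$ lies in a single affine subspace by connectedness. One must also justify that the derivative of a curve in the fixed subspace $L'$ stays in $L'$, and that the reduced-algebra classification applies even though $L'$ may be non-unital (the idempotent decomposition still works). The rest is bookkeeping.
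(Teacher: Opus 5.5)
Your proof is correct, and it takes a genuinely different route from the paper's.

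\textbf{The paper's route.} The paper works only in the $D^*$-affine chart $(\eta^1,\ldots,\eta^n)$. Sufficiency is a direct Christoffel-symbol computation giving $D_{\partial_{\xi^i}}\iota_*\partial_{\xi^j}=-\frac{\delta_{ij}}{\xi^i}\iota_*\partial_{\xi^i}$. For necessity, it reorders coordinates and writes the $D^*$-plane as a graph $\eta=(\xi,A\xi+b)$. It then requires $D_{\partial_{\xi^i}}\iota_*\partial_{\xi^j}$ to be tangent, and reads off $a_{k,i}a_{k,j}=0$ for $i\neq j$, and then $b_k=0$ whenever row $k$ of $A$ is nonzero.

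\textbf{Your route.} You bring in the second flat chart $\theta=\log\eta$. This makes sufficiency immediate, since the defining equations are affine in both charts. It also turns necessity into the classification of subalgebras of $(\mathbb{R}^n,\circ)$.

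\textbf{How they compare.} The two proofs share one core equation. For $\eta$-constant vector fields, $D_vw=-\eta^{-1}\circ v\circ w$. So the paper's tangency condition says exactly that $\eta^{-1}\circ L$ is closed under $\circ$. Incidentally, this gives you the subalgebra property at a single point $p\in M$ without any differentiation. The paper solves that equation by coefficient matching in a chosen graph chart, which is elementary but index-heavy. You solve it structurally, without choosing a parametrization. Your blocks $C_k$ are also exactly the sets $C_l=\{l\}\cup\phi^{-1}(l)$ that the paper later needs for the Markov embedding in Theorem~\ref{maint}(1).

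\textbf{On the step you flagged.} The implication ``reduced implies spanned by idempotents'' is false for abstract real algebras ($\mathbb{C}$ over $\mathbb{R}$ is a counterexample). Inside $(\mathbb{R}^n,\circ)$, however, it is easy:
\begin{itemize}
\item If $u\in L'$ has distinct nonzero entry values $c_1,\ldots,c_r$, then $u,u\circ u,\ldots,u^{\circ r}$ determine each indicator $\mathbf{1}_{\{i:\,u_i=c_s\}}$ through an invertible Vandermonde system.
\item Hence $L'$ is spanned by indicators.
\item The minimal such indicators are pairwise disjoint and span $L'$, and no unit is needed.
\end{itemize}
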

\begin{proof}
    For simplicity, suppose that $M$ is contained in \eqref{dapde}, where $i_k=m+k$, and $j_l=m+A+l$.
    Let $(g,\nabla)$ be the statistical structure induced on $M$ by $(g_0,D)$ and the inclusion map $\iota:M\to(\mathbb{R}^+)^n$.
    Since $(\eta^1,\ldots,\eta^n)$ is an affine coordinate system of $D^*$, it is clear that $M$ is a $D^*$-autoparallel submanifold of $(\mathbb{R}^+)^n$.
    Define a coordinate system $(\xi^1,\ldots,\xi^m)$ on $M$ by $(\xi^1(p),\ldots,\xi^m(p))=(\eta^1(p),\ldots,\eta^m(p)),\,p\in M$.
    By using these coordinate systems, we have
    \begin{align*}
        \iota_*\frac{\partial}{\partial \xi^i} &= \frac{\partial}{\partial \eta^i} + \sum_{k\in\phi^{-1}(i)}a_k\frac{\partial}{\partial \eta^{k}}
    \end{align*}
    and
    \begin{align*}
        D_{\frac{\partial}{\partial \xi^i}}\iota_*\frac{\partial}{\partial \xi^j} &= -\frac{\delta_{ij}}{\eta^i}\frac{\partial}{\partial \eta^i} - \sum_{k\in\phi^{-1}(i)\cap\phi^{-1}(j)}\frac{(a_k)^2}{\eta^k}\frac{\partial}{\partial \eta^k}\\
        &= -\frac{\delta_{ij}}{\xi^i}\frac{\partial}{\partial \eta^i} - \delta_{ij}\sum_{k\in\phi^{-1}(i)}\frac{a_k}{\xi^i}\frac{\partial}{\partial \eta^k}\\
        &= -\frac{\delta_{ij}}{\xi^i}\iota_*\frac{\partial}{\partial \xi^i}.
    \end{align*}
    Since this a linear combination of $\iota_*\frac{\partial}{\partial \xi^i}$, the second fundamental form with respect to $D$ is $0$, hence $M$ is a doubly autoparallel submanifold of $((\mathbb{R}^+)^n,g_0,D)$.\\

    Now we assume that $M$ is a doubly autoparallel submanifold of $((\mathbb{R}^+)^n,g_0,D)$.
    Since $M$ is a $D^*$-autoparallel submanifold of $(\mathbb{R}^+)^n$, it is an open subset of $P^{m}\cap(\mathbb{R}^+)^n$ where $P^{m}\subset\mathbb{R}^n$ is an $m$-dimensional plane with respect to the $(\eta^1,\ldots,\eta^n)$ coordinates.
    After a suitable reordering of the elements of the $D^*$-affine coordinates $(\eta^1,\ldots,\eta^n)$, there exists a global coordinate system $(\xi^1,\ldots,\xi^m)$ on $P^{m}$ such that
    \begin{align}
        \begin{pmatrix} \eta^1\\ \vdots \\ \vdots \\ \eta^n \end{pmatrix} = 
        \begin{pmatrix} 
         1 & 0 & \dots & 0 \\
         0 & 1 & &0  \\
         \vdots & & \ddots & \vdots \\
         0 & 0 & \dots & 1\\
         a_{m+1,1} & a_{m+1,2} & \dots & a_{m+1,m}\\
         a_{m+2,1} & a_{m+2,2} & \dots & a_{m+2,m}\\
         \vdots & &\ddots & \vdots\\
         a_{n-m,1} & a_{n-m,2} & & \dots a_{n-m,m}
        \end{pmatrix} 
        \begin{pmatrix} \xi^1\\ \vdots \\ \xi^m \end{pmatrix} 
        + \begin{pmatrix} 0\\ \vdots \\ 0 \\ b_{m+1} \\ \vdots \\ b_{n-m} \end{pmatrix}.
    \end{align}
    Here, we have
    \begin{align}
        \label{4eq1}
        \iota_*\frac{\partial}{\partial \xi^i} = \frac{\partial}{\partial \eta^i} + \sum_{k=1}^{n-m}a_{k,i}\frac{\partial}{\partial \eta^k}
    \end{align}
    and
    \begin{align}
        \label{4eq2}
        D_{\frac{\partial}{\partial \xi^i}}\iota_*\frac{\partial}{\partial \xi^j} &= -\frac{\delta_{ij}}{\xi^i}\frac{\partial}{\partial \eta^i} - \sum_{k=1}^{n-m}\frac{a_{k,i}a_{k,j}}{\sum_{l=1}^m a_{k,l}\xi^l + b_k}\frac{\partial}{\partial \eta^k}.
    \end{align}
    Since \eqref{4eq2} can be expressed as a linear combination of $\left\{\iota_*\frac{\partial}{\partial \xi^1},\ldots,\iota_*\frac{\partial}{\partial \xi^m}\right\}$, we have $a_{k,i}a_{k,j}=0$ for each $i\neq j$.
    Thus, for each $k\in\{1,\ldots,n-m\}$ there exists a unique $1\leq i_k\leq m$ such that $a_{i,k}=0$ for each $i\neq i_k$.
    Here, from equations \eqref{4eq1} and \eqref{4eq2}, it follows that
    \begin{align}
        \frac{a_{k,i_k}}{\xi^{i_k}} = \frac{(a_{k,i_k})^2}{a_{k,i_k}\xi^{i_k}+b_{k}}.
    \end{align}
    If $a_{k,i_k}\neq 0$, then $b_k=0$ hold, and consequently we have $a_{k,i_k}>0$.
    If $a_{k,i_k}= 0$, then $b_k=\eta^k>0$.
    Therefore, the manifold $\mathcal{P}^m = P^m\cap(\mathbb{R}^+)^n$ can be expressed in terms of equation \eqref{dapde}.\\
\end{proof}

\begin{remark}
    \label{dapcoor}
    In Proposition \ref{dapdenorm}, the induced statistical structure $(g,\nabla)$ on $M$ is expressed by
    \begin{align*}
        g_0\left(\frac{\partial}{\partial \xi^i},\frac{\partial}{\partial \xi^j}\right) &= \left(1+\sum_{k\in\phi^{-1}(i)}a^k\right)\frac{\delta_{ij}}{\xi^i}\\
        \nabla_{\frac{\partial}{\partial \xi^i}}\frac{\partial}{\partial \xi^j} &= -\frac{\delta_{ij}}{\xi^i}.
    \end{align*}
    If we define a coordinate system $(\sigma^1,\ldots,\sigma^m)$ on $M$ by $\sigma^i=(1+\sum_{k\in\phi^{-1}(i)}a^k)\xi^i$, then the expressions of $(g,\nabla)$ with respect to $(\sigma^1,\ldots,\sigma^m)$ coincide with the expression of $(g_0,D)$ given by \eqref{denorprobsimp}.
    Thus, we can regard $M$ as an open statistical submanifold of $((\mathbb{R}^+)^m,g_0,D)$.\\
\end{remark}

The doubly totally-umbilical submanifolds in the denormalized state space of codimension one that are not doubly autoparallel is already classified in \cite{Furuhata2024Toward,FURUHATA2011S86}.
We provide a different proof to this classfication.\\

\begin{proposition}
    \label{dtudenorm1}
    A submanifold $M^m$ of the denormalized state space $((\mathbb{R}^+)^{m+1},g_0,D)$ is doubly totally-umbilical if and only if it is contained in one of the following sets$\mathrm{:}$
    \renewcommand{\labelenumi}{$\mathrm{(\theenumi)}$}
    \begin{enumerate}
        \item The set $\mathcal{P}^m$ in $\eqref{dapde}$.
        \item The set $\Delta^m(b) = \{(\eta^1,\ldots,\eta^{m+1})\in(\mathbb{R}^+)^{m+1}\mid \sum_{i=1}^{m+1}\eta^i=b\}$, where $b\in\mathbb{R}^+$.
    \end{enumerate}
\end{proposition}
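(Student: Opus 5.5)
The plan is to use the hypersurface results of the previous subsection: Corollary~\ref{umhyp4}, Proposition~\ref{umhyp3}, the theorem that gives a $\nabla^{(\beta)}$-autoparallel direction, and Proposition~\ref{dapdenorm}. The ambient space $((\mathbb{R}^+)^{m+1},g_0,D)$ is flat for $D$ and $D^*$, and $g_0$ is Euclidean, so $\widetilde{k}^{(\alpha)}=0$ for every $\alpha$. The theorem therefore cannot be applied as stated, since it needs $\widetilde{k}^{(1)}\neq\widetilde{k}^{(0)}$, and a direct argument is required.

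For the ``if'' direction, case (1) is Proposition~\ref{dapdenorm}: every doubly autoparallel submanifold is doubly totally-umbilical. For case (2), $\Delta^m(b)$ is a hyperplane in the $D^*$-affine coordinates $\eta$, so $B^{*}=0$. With respect to $g_0$ in the $y$-coordinates, $y^i=2\sqrt{\eta^i}$, it is an open part of the Euclidean sphere of radius $2\sqrt{b}$, which is totally umbilical, so $\widehat{B}=\widehat{H}\otimes g$. By Remark~\ref{umrem}(2), applied with $\alpha=0$ and $\beta=-1$, $\Delta^m(b)$ is doubly totally-umbilical. This is the same argument as Example~\ref{probsimpisdtu}, rescaled.

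For the ``only if'' direction, let $M$ be doubly totally-umbilical and not doubly autoparallel. I would work locally, where $M$ is orientable; connectedness and analyticity of the final description then globalize the conclusion. By Proposition~\ref{umhyp3}, each $\mathrm{tr}_g h^{(\alpha)}$ is constant, and by Corollary~\ref{umhyp4}, $\tau^{(\alpha)}=0$. Since $\widehat{B}$ is umbilic for the flat metric $g_0$, Riemannian theory says $M$ lies in a Euclidean hyperplane or sphere in the $y$-coordinates.

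The main step is to show that $B^{*}=0$, i.e.\ $\mathrm{tr}_g h^{(-1)}=0$. Once that holds, $M$ is an affine hyperplane $\sum_i c_i\eta^i=b$ in the $\eta$-coordinates. In the $y$-coordinates this reads $\sum_i c_i (y^i)^2=4b$, a quadric, and it must also be a sphere or a hyperplane. Hence either all $c_i$ are equal, which gives $\Delta^m(b)$ with $b>0$ by positivity, or the quadric degenerates into a plane, which forces the doubly autoparallel case of Proposition~\ref{dapdenorm}.

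To prove $B^{*}=0$, I would use the Hessian structure. The ambient space has CHC $0$, so \eqref{Hesscurv1}--\eqref{Hesscurv3} apply. The idea is to take $\xi=\boldsymbol{n}$ in \eqref{Hesscurv3} with $X,Y,Z$ chosen orthonormally; the left side is $0$ and the normal derivatives vanish because $\tau=0$ and the traces are constant. This gives $g(K_XY,Z)\,\mathrm{tr}_g h^{(-1)}=0$, so either $H^{*}=0$ or $K$ is very restricted. If $K=0$, I would apply Proposition~\ref{nontrib}, or compute $[\widetilde{K}_X,\widetilde{K}_Y]$ directly for $D$. Since $\widetilde{R}^{(1)}=\widetilde{R}^{(0)}=0$, equation \eqref{addcurv} gives $[\widetilde{K}_X,\widetilde{K}_Y]=0$, so that route fails. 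Instead I would use \eqref{Hesscurv1} with $X=Y=Z=W$ a unit vector, which yields $\widetilde{g}(H,H^{*}-\widehat{H})$ constraints of the kind used in the proof of Corollary~\ref{hesssub}, together with $\widetilde{g}(\widetilde{K}_{\boldsymbol n}\boldsymbol n,\cdot)$ identities, to force $H^{*}=0$.

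I expect this case analysis to be the main obstacle: excluding $K=0$ with $H^{*}\neq0$, where the ambient curvature gives no contradiction. If it resists, the fallback is direct coordinates. I would write $M$ as a graph in the $\eta$-coordinates and impose $B^{(1)}=H^{(1)}\otimes g$ with $D$-Christoffels $-\delta_{ij}/\eta^i$. Using constant $h$ and $\tau=0$, this reduces to a linear ODE for the normal, whose solutions are the quadrics above.
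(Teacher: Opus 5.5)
There is a genuine gap: the branch $K\equiv0$, $H^*\neq0$ is never closed, and the tools you propose for it cannot close it.

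Up to the identity $g(K_XY,Z)\,\mathrm{tr}_gh^{(-1)}=0$ your outline follows the paper. The paper reads this identity off \eqref{Hesscurv2} with $\xi=\boldsymbol{n}$ rather than \eqref{Hesscurv3}. No orthonormality is needed, since all normal-derivative terms vanish. Because $\mathrm{tr}_gh^{(-1)}$ is constant, the remaining branch is: $\mathrm{tr}_gh^{(-1)}$ a nonzero constant, hence $K\equiv0$. For it you propose \eqref{Hesscurv1} as in Corollary~\ref{hesssub}, flatness of $D$, $D^*$, $D^{g_0}$, and identities for $\widetilde g(\widetilde K_{\boldsymbol{n}}\boldsymbol{n},\cdot)$. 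These use only properties shared by every flat Hessian manifold of CHC $0$, and in that generality the branch is not empty. A round sphere in $(\mathbb{R}^{m+1},g_0,\nabla^{g_0})$ (flat, CHC $0$, $\widetilde K=0$) is doubly totally-umbilical and not doubly autoparallel, with $K=0$ and $H=H^*=\widehat H\neq0$. So no argument of that kind can force $H^*=0$. Your coordinate fallback is too unspecific to fill the hole.

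The missing idea is the explicit non-degeneracy of the difference tensor of $D$. In the $\eta$-coordinates, $\widetilde K_XX$ is a negative multiple of $\sum_i(X^i)^2(\eta^i)^{-1}\partial/\partial\eta^i$, so $\widetilde K_XX\neq0$ whenever $X\neq0$. The paper uses this to show $K\neq0$ at every point before anything else:
\begin{itemize}
\item Suppose $K_p=0$. Evaluate \eqref{Hesscurv1} (left side zero) at $(X,Y,Y,X)$ and $(X,Y,X,Y)$ for an orthonormal pair. This gives $\widetilde g(H^*,\widehat H-H)=0=\widetilde g(H,H^*-\widehat H)$.
\item Since $\widehat H-H=H^*-\widehat H=\frac12(H^*-H)$, subtracting gives $H=H^*=\widehat H$ at $p$.
\item Hence $\widetilde K_{\iota_*X}\iota_*Y=\iota_*K_XY+g(X,Y)(H-\widehat H)=0$ on $T_pM$, contradicting the formula above.
\end{itemize}
The route you discarded also works. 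Inserting $[\widetilde K_X,\widetilde K_Y]=0$ into the identity in the proof of Proposition~\ref{nontrib} gives $\|H-\widehat H\|^2=0$, and the same explicit formula finishes.

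After $H^*=0$, your route genuinely differs from the paper's. You intersect the $\eta$-hyperplane $\sum_ic_i\eta^i=b$ with the Riemannian classification of umbilic hypersurfaces in the $y$-coordinates. The paper instead uses $\tau^{(-1)}=0$, i.e.\ $g_0(D^*_X\boldsymbol{n},\boldsymbol{n})=0$, to get $\sum_ia_i^2X^i=0$ on $TM$, hence $a_i\in\{0,t\}$. It then rules out $a_i=0$, because that would give $D^*_{\partial/\partial\eta^i}\boldsymbol{n}=0$ and force $\mathrm{tr}_gh^{(1)}=0$. Your version is fine, but the step from ``quadric contained in a sphere or hyperplane'' to ``all $c_i$ equal or contained in $\mathcal{P}^m$'' needs a short argument. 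For example, compare the normals $(c_iy^i)_i$ and $y-y_0$ (or a fixed vector) on an open set.
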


\begin{proof}
    Case $(1)$ follows immediately from Proposition \ref{dapdenorm}.
    If case $(2)$ holds, it can be proved that $\Delta^m(b)$ is a doubly totally-umbilical submanifold of $((\mathbb{R}^+)^{m+1},g_0,D)$ in the same manner as in Example \ref{probsimpisdtu}.\\
    
    Now we let $M^m$ be a doubly totally-umbilical submanifold of $((\mathbb{R}^+)^{m+1},g_0,D)$.
    We may assume that $M$ is not doubly autoparallel, since otherwise case $(1)$ follows from Proposition~\ref{dapdenorm}.
    Let $(g,\nabla)$ be the induced statistical structure by the inclusion $\iota:M\to(\mathbb{R}^+)^{m+1}$, and let $K,\widetilde{K}$ be the difference tensor field of $(g,\nabla)$, $(g_0,D)$, respectively.
    We prove $K\neq0$ on any point of $M$.
    If $K=0$ on some point $p\in M$, by \eqref{Hesscurv1}, for each orthonormal pair $X,Y\in T_p M$ we have
    \begin{align*}
        0 = g_0\left((D_{\iota_*X}\widetilde{K})(\iota_*Y,\iota_*Y),X\right) &= g_0\left(H^*,\widehat{H}-H\right),\\
        0 = g_0\left((D_{\iota_*X}\widetilde{K})(\iota_*X,\iota_*Y),Y\right) &= g_0\left(H,H^*-\widehat{H}\right),
    \end{align*}
    since $((\mathbb{R}^+)^{m+1},g_0,D)$ is a Hessian manifold of CHC $0$.
    Thus it must hold that $H_p=H_p^*=\widehat{H}_p$.
    From this result, on $p$ we have 
    \begin{align*}
        0 = \iota_*K_X Y &=\iota_*\nabla_X Y - \iota_*\nabla^g_X Y\\
        &= D_X\iota_* Y - D^{g_0}_X\iota_* Y - g(X,Y)(H-\widehat{H}) = \widetilde{K}_{\iota_* X}\iota_* Y
    \end{align*}
    for $X,Y\in\Gamma(TM)$.
    Thus, if $X\in T_{\iota(p)}(\mathbb{R}^{+})^{m+1}$ is tangent to $M$, then it holds that $\widetilde{K}_X X=0$.
    However, if we express $X$ with respect to $(\eta^1,\ldots,\eta^{m+1})$ by $X = \sum_{i=1}^{m+1}X^i\frac{\partial}{\partial \eta^i}$, we have
    \begin{align*}
        0 = \widetilde{K}_X X = -\sum_{i=1}^{m+1} \left(X_i\right)^2\frac{1}{\eta^i}\frac{\partial}{\partial \eta^i},
    \end{align*}
    which implies that $X^i=0$ for each $i$ and consequently $X=0$.
    This contradicts that the dimension of $M$ is $m\geq 2$, therefore we have $K\neq0$ on any point of $M$.\\

    For each $X,Y,Z\in\Gamma(TM)$, from \eqref{Hesscurv2} we have
    \begin{align*}
        0 =& g(K_X Y,Z)g_0(H^*,\boldsymbol{n}) - g(Y,Z)g_0\left(D^{\perp}_X \widehat{H},\boldsymbol{n}\right)\\
        & + g(X,Y)g_0\left(\widehat{D}^{\perp}_ZH,\boldsymbol{n}\right) + g(X,Z)\left(\widehat{D}^{\perp}_Y H,\boldsymbol{n}\right)
    \end{align*}
    since $D^{\perp}H = D^{\perp*}H^* = \widehat{D}^{\perp}\widehat{H} = 0$ holds from \eqref{parmean}, where $\boldsymbol{n}$ is the normal vector field of $\iota:M^m\to(\mathbb{R}^+)^{m+1}$.
    Here, from Proposition \ref{umhyp3} and Corollary \ref{umhyp4}, we have $D^{\perp} \widehat{H}=\widehat{D}^{\perp} H=0$.
    Thus $H^*=0$, therefore $M$ is a $D^*$-autoparallel submanifold of $(\mathbb{R}^+)^{m+1}$, which is contained in a hyperplane with respect to the coordinate system $(\eta^1,\ldots,\eta^{m+1})$. 
    There exists $a_1,\ldots,a_{m+1},b\in\mathbb{R}$ such that $(a_1,\ldots,a_{m+1})\neq(0,\ldots,0)$ and $M$ is contained in
    \begin{align}
        \label{planeform}
        \left\{(\eta^1,\ldots,\eta^{m+1})\mid \sum_{i=1}^{m+1}a_i\eta^i = b\right\}.
    \end{align}
    Here, if we set $\varphi(\eta^1,\ldots,\eta^{m+1}) = \sum_{i=1}^{m+1}a_i\eta^i - b$, for each $p\in M$ and $X=\sum_{i=1}^{m+1}X^i\frac{\partial}{\partial \eta^i}\in T_{\iota(p)}(\mathbb{R}^+)^{m+1}$, the equation $0 = X\varphi = \sum_{i=1}^{m+1}X^ia_i$ is equivalent to $X\in\iota_*T_pM$, and we have
    \begin{align*}
        \boldsymbol{n}_p = \frac{1}{\|(\operatorname{grad}_{g_0}\varphi)_{\iota(p)}\|_{g_0}}(\operatorname{grad}_{g_0}\varphi)_{\iota(p)},\quad p\in M,
    \end{align*}
    where $\operatorname{grad}_{g_0}\varphi$ is the gradient vector field of $\varphi$ on $((\mathbb{R}^+)^{m+1},g_0)$.
    Particularly, by $g_0 = \sum_{i=1}^{m+1}(\eta^i)^{-1}d\eta^i\otimes d\eta^i$ on the coordinates $(\eta^1,\ldots,\eta^{m+1})$, we have
    \begin{align*}
        \operatorname{grad}_{g_0}\varphi &= \sum_{i=1}^{m+1}a_i\eta^i\frac{\partial}{\partial \eta^i},
    \end{align*}
    which implies $X\|\operatorname{grad}_{g_0}\varphi\|_{g_0} = 0$ for every $X\in\iota_*(T_pM)$.
    From Corollary \ref{umhyp4} and equation \eqref{parmean}, we also have
    \begin{align*}
        0 &= g_0(D^*_X \boldsymbol{n},\boldsymbol{n})\\
        &= \frac{1}{\|\operatorname{grad}_{g_0}\varphi\|_{g_0}}g_0\left(D^*_X \operatorname{grad}_{g_0}\varphi, \boldsymbol{n}\right)\\
        &= \sum_{i=1}^{m+1}X^ia_i^2
    \end{align*}
    for each $X\in\iota_*T_pM$.
    This implies that there exists $t\neq0$ such that $(ta_1,\ldots,ta_{m+1})=(a_1^2,\ldots,a_{m+1}^2)$, thus for each $1\leq i \leq m+1$ we have $a_i=t$ or $a_i=0$.
    We can assume $t=1$ by rescaling $b\in\mathbb{R}$ if necessary.
    Lastly, we prove that we have $a_i\neq 0$ for every $1\leq i \leq m+1$.
    If there exists an $1\leq i \leq m+1$ such that $a_i = 0$, then $\frac{\partial}{\partial \eta^i}\in\iota_*T_pM$.
    Here, from Proposition \ref{hypsur} we have
    \begin{align*}
        \frac{1}{m}\mathrm{tr}_gh\cdot \frac{\partial}{\partial \eta^{i}} = D^*_{\frac{\partial}{\partial \eta^{i}}}\boldsymbol{n} = 0,
    \end{align*}
    which implies $\mathrm{tr}_gh=0$.
    Since $m\|H\|_{g_0}=|\mathrm{tr}_g h|$, this contradicts $M$ is not doubly autoparallel.
    Hence we have $a_i\neq 0$, and the plane \eqref{planeform} is equal to $\Delta^{m}(b)$.\\
\end{proof}

Combining Propositions \ref{dapdenorm} and \ref{dtudenorm1}, we obtain a complete classification of doubly totally-umbilical submanifolds in the denormalized state space.\\

\begin{theorem}
    \label{dtudenorm2}
    A submanifold $M^m$ of the denormalized state space $((\mathbb{R}^+)^{n},g_0,D)$ is doubly totally-umbilical if and only if one of the following conditions holds$\mathrm{:}$
    \renewcommand{\labelenumi}{$\mathrm{(\theenumi)}$}
    \begin{enumerate}
        \item It is contained in the manifold $\mathcal{P}^m$ defined by $\eqref{dapde}$.
        \item There exists a doubly autoparallel submanifold $M\subset \mathcal{P}^{m+1}\subset (\mathbb{R}^+)^{n}$ such that, with the induced statistical structure $(\widetilde{g},\widetilde{\nabla})$ on $\mathcal{P}^{m+1}$, there exists a global $\widetilde{\nabla}^{*}$-affine chart $(\sigma^1,\ldots,\sigma^{m+1};(\mathbb{R}^+)^{m+1})$ on $\mathcal{P}^{m+1}$ such that $M$ is contained in $\Delta^m(b) = \{(\sigma^1,\ldots,\sigma^{m+1})\in(\mathbb{R}^+)^{m+1}\mid \sum_{i=1}^{m+1}\sigma^i=b\}$, where $b\in\mathbb{R}^+$.
    \end{enumerate}
\end{theorem}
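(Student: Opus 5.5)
The plan is to prove both directions of Theorem~\ref{dtudenorm2} by reducing to the codimension-one classification, Proposition~\ref{dtudenorm1}, inside a suitable doubly autoparallel submanifold.

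\emph{Sufficiency.} Case (1) is Proposition~\ref{dapdenorm}. For case (2), Remark~\ref{dapcoor} identifies $\mathcal{P}^{m+1}$, with its induced structure and the chart $(\sigma^i)$, with an open statistical submanifold of $((\mathbb{R}^+)^{m+1},g_0,D)$. By Proposition~\ref{dtudenorm1}, $M\subset\Delta^m(b)$ is doubly totally-umbilical in $\mathcal{P}^{m+1}$. Since $\mathcal{P}^{m+1}$ is doubly autoparallel in $(\mathbb{R}^+)^n$, its second fundamental forms vanish. Composing the Gauss formulas \eqref{Gauss} for $\nabla$ and $\nabla^*$ (as in the proof of Lemma~\ref{umbequi}) gives
\[
D_X Y=\nabla_X Y+g(X,Y)H^{(\pm1)}
\]
for $\pm$ the two connections, with no extra term. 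Hence $M$ is doubly totally-umbilical in $(\mathbb{R}^+)^n$.

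\emph{Necessity.} Let $M$ be doubly totally-umbilical and not doubly autoparallel; otherwise case (1) holds by Proposition~\ref{dapdenorm}. I will construct the $(m+1)$-dimensional doubly autoparallel envelope as follows.

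Since $(\mathbb{R}^+)^n$ is Hessian of CHC $0$, the first goal is to show $H^*=0$, i.e.\ that $M$ is $D^*$-autoparallel. This would follow from Corollary~\ref{hesssub} if we knew $(M,g,\nabla)$ has CHC. Instead I will argue directly as in Proposition~\ref{dtudenorm1}. Proposition~\ref{conjandcon} gives $D^{\perp}H=D^{\perp*}H^*=0$. I will use \eqref{Hesscurv1} with the ambient CHC $0$ to show that $K$ vanishes nowhere, using the same $\widetilde K_XX=0\Rightarrow X=0$ argument. Then I will use \eqref{Hesscurv2}, tested against $\xi=H^*$ and an orthonormal pair, together with the normal-connection identities from Proposition~\ref{basic1}, to conclude that $H^*=0$.

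With $H^*=0$, $M$ lies in an $m$-dimensional $\eta$-affine plane. The equation $B=g\otimes H$ then gives the following constraint. For tangent $X,Y$, the vector $\widetilde K_XY=-\sum_i X^iY^i(\eta^i)^{-1}\partial_{\eta^i}$ must lie in $TM\oplus\mathbb{R}H$, and its normal part must be proportional to $g(X,Y)$.

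Set $N=\mathrm{span}(TM,H)$; this is parallel along $M$ since $D^{\perp}H=0$. I will then take the $(m+1)$-dimensional $\eta$-affine plane through $M$ in the direction $N$ and show that its intersection with $(\mathbb{R}^+)^n$ is closed under the coordinatewise product structure encoded by $\widetilde K$. By the argument of Proposition~\ref{dapdenorm}, this means that the coefficient matrix has at most one nonzero entry per extra row, so the plane is a $\mathcal{P}^{m+1}$.

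This closure step is the main obstacle: one must show that $\widetilde K(N,N)\subset N$, not merely $\widetilde K(TM,TM)\subset N$. I would try to obtain it from the Codazzi-type relation \eqref{Hesscurv3} and the parallelism of $H$, possibly expressing $H$ via the gradient as in Proposition~\ref{dtudenorm1}.

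Finally, $M$ is a doubly totally-umbilical hypersurface of $\mathcal{P}^{m+1}$. Via Remark~\ref{dapcoor}, apply Proposition~\ref{dtudenorm1} in the $\sigma$-chart. Since $M$ is not doubly autoparallel in $\mathcal{P}^{m+1}$ (the second fundamental forms agree), $M\subset\Delta^m(b)$, which is case (2).
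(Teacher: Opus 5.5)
Your architecture matches the paper's. Sufficiency comes from composing with the doubly autoparallel inclusion of $\mathcal{P}^{m+1}$. Necessity goes via $K\neq0$ everywhere, $H^*=0$, an $(m+1)$-dimensional doubly autoparallel envelope, and then Proposition~\ref{dtudenorm1} in the $\sigma$-chart. However, the necessity direction is not closed: the envelope step, which you yourself call the main obstacle, is left as a hope. Parallelism of $N=\iota_*TM\oplus\mathbb{R}H$ along $M$ only yields $\widetilde K(TM,N)\subset N$ at points of $M$. It controls neither $\widetilde K_HH$ nor points of the $(m+1)$-plane off $M$, and two $(m+1)$-dimensional submanifolds can be tangent along $M$ without coinciding. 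The paper handles this step in one line, by noting that $\iota_*TM\oplus\mathbb{R}\widehat H$ is parallel along $M$ for both $D^{g_0}$ and $D$. Your sense that something more is needed is therefore sound, but the proposal does not supply it. Note also that $D^{\perp}H=0$ gives only $D$-parallelism of $N$. To speak of a single $\eta$-affine plane $p+N$ you need $D^*$-parallelism, which follows from $D^{\perp*}H=2\widehat D^{\perp}H-D^{\perp}H=0$, using $H=2\widehat H$ and $\widehat D^{\perp}\widehat H=0$.

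The missing idea is to compute $H$ explicitly.
\begin{itemize}
\item With $H^*=0$, \eqref{umcon} gives $A_\xi=0$ and $A^*_\xi=g_0(H,\xi)\,\mathrm{id}$. Hence along $M$ we have $D_XH=0$ and $D^*_XH=-cX$, where $c=\|H\|_{g_0}^2>0$ is constant.
\item In $\eta$-coordinates $D_XU-D^*_XU=-X\circ_\eta U$, where $(U\circ_\eta V)^i=U^iV^i/\eta^i$. Comparing the two equations gives $H^i=-c\,\eta^i$ for $i\in S$, the set of indices for which $\eta^i$ is not constant on $M$.
\item The $S^c$-components of the Gauss equation $-X\circ_\eta Y-g_0(X,Y)H\in\iota_*TM$ force $H^i=0$ for $i\notin S$. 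Thus $H=-c\,\pi_S(\eta)$, the $\eta$-position vector with its $S^c$-entries set to $0$.
\item Consequently $N=\iota_*TM\oplus\mathbb{R}\pi_S(\eta)$ is a fixed subspace supported on $S$. On $M$ one has $X\circ_\eta\pi_S(\eta)=X$, $\pi_S(\eta)\circ_\eta\pi_S(\eta)=\pi_S(\eta)$ and $X\circ_\eta Y\in N$, i.e.\ $N\circ_\eta N\subset N$.
\item On $S$-supported vectors, $\circ_\eta$ is homogeneous of degree $-1$ in $\eta_S$. So the inclusion persists on the open cone over $M$ inside $(p+N)\cap(\mathbb{R}^+)^n$. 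After clearing denominators it is a polynomial identity on $N$, hence it holds on the whole plane.
\end{itemize}
That plane is therefore $D^*$- and $D$-autoparallel, hence a $\mathcal{P}^{m+1}$ by Proposition~\ref{dapdenorm}.

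A smaller gap is your route to $H^*=0$. Testing \eqref{Hesscurv2} against $\xi=H^*$ with $Z=Y$ and $\{X,Y\}$ orthonormal gives, via Proposition~\ref{basic1}, only $X\|H^*\|_{g_0}^2=2g(K_YY,X)\|H^*\|_{g_0}^2$. This is a differential relation, not vanishing, so you need $\|H^*\|$ to be constant. The paper gets this by first testing against $\xi=H-\widehat H$. The derivative term there drops out because $\|H-\widehat H\|_{g_0}^2=\|\widehat H\|_{g_0}^2-g_0(H,H^*)$ is constant. This gives $g_0(H^*,H-\widehat H)=0$, hence $\|H^*\|_{g_0}^2=g_0(H^*,H)$ is constant. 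Only then does testing against $\xi=H^*$ (the paper uses \eqref{Hesscurv3}) give $g(K_YY,X)\|H^*\|_{g_0}^2=0$, and so $H^*=0$ because $K$ vanishes nowhere.
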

    \begin{proof}
        If $(1)$ holds, it is clear that $M$ is a doubly autoparallel submanifold of $((\mathbb{R}^+)^{n},g_0,D)$ by Proposition \ref{dapdenorm} and hence doubly totally-umbilical.
        Suppose that $(2)$ holds.
        By Proposition \ref{dtudenorm1} the manifold $M$ is a doubly totally-umbilical submanifold of $(\mathcal{P}^{m+1},\widetilde{g},\widetilde{\nabla})$.
        With the inclusion maps $\iota_1:M\to \mathcal{P}^{m+1}$, $\iota_2:\mathcal{P}^{m+1}\to (\mathbb{R}^+)^{n}$, we define $\iota=\iota_2\circ\iota_1$, and let $(g,\nabla)$ be the statistical structure induced on $M$ by $\iota_1$.      
        Since $\mathcal{P}^{m+1}$ is doubly autoparallel in $((\mathbb{R}^+)^{n},g_0,D)$, for every $\alpha\in\mathbb{R}$ and $X,Y\in\Gamma(TM)$ we have
        \begin{align*}
            D^{(\alpha)}_X \iota_*Y &= D^{(\alpha)}_X \iota_{2*}\iota_{1*}Y\\
            &=\iota_{2*}\widetilde{\nabla}^{(\alpha)}_X \iota_{1*}Y\\
            &=\iota_*\nabla^{(\alpha)}_X Y + g(X,Y)H^{(\alpha)}.
        \end{align*}
        Hence $M$ is a doubly totally-umbilical submanifold of $((\mathbb{R}^+)^{n},g_0,D)$.\\

        Conversely, let $M$ be a doubly totally-umbilical submanifold of $((\mathbb{R}^+)^{n},g_0,D)$.
        If $M$ is doubly autoparallel, then $(1)$ follows from Proposition~\ref{dapdenorm}.
        Therefore, we may assume that $M$ is not doubly autoparallel.
        Let $(g,\nabla)$ be the induced statistical structure on $M$ by $((\mathbb{R}^+)^{n},g_0,D)$ and the inclusion map $\iota:M\to(\mathbb{R}^+)^{n}$.
        As in the proof of Proposition~\ref{dtudenorm1}, the difference tensor $K$ of $(g,\nabla)$ does not vanish on any point of $M$.
        We first show that $\|H-\widehat{H}\|_{g_0}$ is constant.
        Since $g_0(H,H^{*})$ and $g_0(\widehat{H},\widehat{H})$ are constant and $2\widehat{H} = H + H^*$, we have
        \begin{align*}
            \|H-\widehat{H}\|^2_{g_0} &= \frac{1}{4}\|H-H^*\|^2_{g_0}\\
            &=\frac{1}{4}\left(\|H\|^2_{g_0}+\|H^*\|^2_{g_0}-2g_0\left(H,H^*\right)\right)\\
            &=\|\widehat{H}\|^2_{g_0} - g_0\left(H,H^*\right).
        \end{align*}
        Together with $\widehat{D}^{\perp}\widehat{H} = 0$, 
        \begin{align*}
            0 = X\|H-\widehat{H}\|^2_{g_0} = 2g_0\left(\widehat{D}_X^{\perp}H,H-\widehat{H}\right)
        \end{align*}
        holds for every $X\in\Gamma(TM)$.
        Thus, for each orthonormal pair $\{X,Y\}$ on $(M,g)$, from equation \eqref{Hesscurv2} we obtain
        \begin{align*}
            0 &= g(K_Y Y,X)g_0\left(H^*,H - \widehat{H}\right)
        \end{align*}
        by taking $\xi=H-\widehat{H}$.
        Since $K$ does not vanish at any point of $M$, we conclude that $\displaystyle g_0\left(H^*,H - \widehat{H}\right) = 0$.
        Consequently, 
        \begin{align*}
            \|H^*\|_{g_0}^2 = g_0\left(H^*,H\right) - 2g_0\left(H^*,H-\widehat{H}\right)
        \end{align*}
        is also constant.
        Thus, for each orthonormal pair $\{X,Y\}$ on $(M,g)$, from equation \eqref{Hesscurv3} we have
        \begin{align*}
            0 = g(K_Y Y,X)g_0(H^*,H^*) - g_0\left(\widehat{D}_X^{\perp}H^*,H^*\right) = g(K_Y Y,X)\|H^*\|_{g_0}^2
        \end{align*}
        by taking $\xi = H^*$.
        Again using $K\neq0$ on any point, we conclude that $H^*=0$, and hence $M$ is a $D^*$-autoparallel submanifold of $(\mathbb{R}^+)^n$.
        Fix a point $p\in M$.
        Since $\widehat{H}_p\neq 0$, the vector space $V=\iota_*T_p M\oplus\mathbb{R}\widehat{H}_p$ is $m+1$-dimensional.
        For every $X,Y\in \Gamma(TM)$, we have $(D^{g_0}_X \iota_*Y)_p,(D^{g_0}_X \widehat{H})_p\in V$.
        We identify $\mathbb{R}^{n+1}$ with $T_{0}\mathbb{R}^{n+1}$ by the $D^{g_0}$-affine coordinate $(y^1,\ldots,y^{n+1})$ where $0\in\mathbb{R}^{n+1}$ is the origin, so that we have $V\subset\mathbb{R}^n$
        The set $\mathcal{P}^{m+1} = (\mathbb{R}^+)^{n}\cap\{p+v\mid v\in V\}$ is an affine $(m+1)$-plane of the affine space $((\mathbb{R}^+)^{n},D^{g_0})$ containing $M$.
        On the other hand, for each $X,Y\in \Gamma(TM)$, we have $(D_X \iota_*Y)_p,(D_X \widehat{H})_p=2(D_X H)_p\in V$, so $\mathcal{P}^{m+1}$ is also an affine $(m+1)$-plane of the affine space $((\mathbb{R}^+)^{n},D)$.
        Hence $\mathcal{P}^{m+1}$ is a doubly autoparallel submanifold of $((\mathbb{R}^+)^{n},g_0,D)$.
        If we denote by the induced statistical structure $(\widetilde{g},\widetilde{\nabla})$, it follows that $M$ is a doubly totally-umbilical submanifold of $(\mathcal{P}^{m+1},\widetilde{g},\widetilde{\nabla})$.
        Using the global chart $(\sigma^1,\ldots,\sigma^{m+1})$ from Remark \ref{dapcoor}, the statistical manifold $(\mathcal{P}^{m+1},\widetilde{g},\widetilde{\nabla})$ can be identified as $((\mathbb{R}^+)^{m+1},g_0,D)$.
        Thus, Proposition \ref{dtudenorm1} implies that there exists a $b\in\mathbb{R}^+$ such that $M$ is contained in $\Delta^m(b)$ defined in the coordinates $(\sigma^1,\ldots,\sigma^{m+1})$.\\
    \end{proof}

\subsection{Doubly totally-umbilical submanifolds in the probability simplex}
Using Theorem \ref{dtudenorm2}, we classify doubly totally-umbilical submanifolds in the probability simplex.\\

\begin{theorem}
    \label{maint}
    A submanifold $M^m$ of the probability simplex $(\Delta^n,g^F,\nabla^{\mathrm{(e)}})$ is doubly totally-umbilical if and only if it is contained in the following $\nabla^{\mathrm{(m)}}$-autoparallel submanifold$\mathrm{:}$
    \begin{align}
        \label{dtusub}
        \Xi = \Xi_{a_{i_1},\ldots,a_{i_{\mathcal{A}}},b_{j_1},\ldots,b_{j_{\mathcal{B}}},\phi} = \left\{p\in\Delta^n\mid p(\omega_{i_k}) = a_{i_k}p(\omega_{\phi(i_k)}),\,p(\omega_{j_l})=b_{j_l}\right\},
    \end{align}
    where $i_1,\ldots,i_{\mathcal{A}},j_1,\ldots,j_{\mathcal{B}}\in\{1,\ldots,n+1\}$ are $\mathcal{A}+\mathcal{B}=n-m$ distinct integers, $\phi:\{i_1,\ldots,i_{\mathcal{A}}\}\to(\{1,\ldots,n+1\}\setminus\{i_1,\ldots,i_{\mathcal{A}},j_1,\ldots,j_{\mathcal{B}}\})$, $a_{i_1},\ldots,a_{i_{\mathcal{A}}}\in\mathbb{R}^+$, and $b_{j_1},\ldots,b_{j_{\mathcal{B}}}\in(0,1)$.
    In particular, if we denote by $(g,\nabla)$ the induced statistical structure on $\Xi$, the following hold $\mathrm{:}$
    \renewcommand{\labelenumi}{$\mathrm{(\theenumi)}$}
    \begin{enumerate}
        \item The submanifold $M$ is doubly autoparallel if and only if $\mathcal{B} = 0$.
        If $M$ is doubly autoparallel, then there exists a statistical diffeomorphism $f:(\Delta^m,g^F,\nabla^{(\mathrm{e})})\to(\Xi,g,\nabla)$ such that $\iota\circ f$ is a Markov embedding, where $\iota:\Xi\to\Delta^n$ is the inclusion map.\\
        
        \item If $\mathcal{B}\neq 0$, there exists a Markov embedding $f:\Delta^{m+1}\to\Delta^n$ such that $\Xi\subset f(\Delta^{m+1})$, and if we identify $f(\Delta^{m+1})$ with $\Delta^{m+1}$, then $(\Xi,g,\nabla)$ is a doubly totally-umbilical statistical submanifold of $(\Delta^{m+1},g^F,\nabla^{\mathrm{(e)}})$.\\
        
        \item The affine connection $\nabla|_{M}$ is complete if and only if $M=\Xi$.
    \end{enumerate}
\end{theorem}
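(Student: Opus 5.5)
By Lemma~\ref{umbequi}, $M$ is doubly totally-umbilical in $(\Delta^n,g^F,\nabla^{(\mathrm{e})})$ if and only if $\iota(M)$ is doubly totally-umbilical in $((\mathbb{R}^+)^{n+1},g_0,D)$. I will therefore work in the $D^*$-affine coordinates $(\eta^1,\ldots,\eta^{n+1})$ of Example~\ref{probsimpemb}, in which $\iota(\Delta^n)=\{\sum_\omega\eta^\omega=1\}$, and apply Theorem~\ref{dtudenorm2}.

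For the ``if'' direction, take $M\subset\Xi$. If $\mathcal{B}=0$, then $\iota(\Xi)$ is the intersection of $\Delta^n(1)$ with a set $\mathcal{P}^{m+1}$ of the form \eqref{dapde}, with the same $a_{i_k}$ and $\phi$ and dimension $m+1$. By Remark~\ref{dapcoor}, in the chart $\sigma^i=(1+\sum_{k\in\phi^{-1}(i)}a_k)\eta^i$ on $\mathcal{P}^{m+1}$ the constraint $\sum_\omega\eta^\omega=1$ becomes exactly $\sum_i\sigma^i=1$. Hence case (2) of Theorem~\ref{dtudenorm2} applies with $b=1$, and $M$ is doubly totally-umbilical.

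If $\mathcal{B}\neq0$, the conditions $\eta^{j_l}=b_{j_l}$ together with $\eta^{i_k}=a_{i_k}\eta^{\phi(i_k)}$ cut out a set $\mathcal{P}^{m+1}$. In its $\sigma$-chart, $\sum_\omega\eta^\omega=1$ reads $\sum\sigma^i=1-\sum_l b_{j_l}=:b$, which is positive because the set is nonempty. Case (2) again applies.

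For the ``only if'' direction, Theorem~\ref{dtudenorm2} leaves two possibilities.
\begin{itemize}
\item[(i)] $\iota(M)\subset\mathcal{P}^m$. Then $\iota(M)$ is doubly autoparallel in $(\mathbb{R}^+)^{n+1}$. The radial vector field $\sum_\omega\eta^\omega\partial_{\eta^\omega}$ is $g_0$-normal to $\iota(\Delta^n)$, since $g_0(\sum\eta^\omega\partial_\omega,X)=\sum X^\omega=0$; it is also a multiple of $\widetilde{H}$. Along $\mathcal{P}^m$ with $b_{j}$'s absent, this field is tangent to $\mathcal{P}^m$. A set of type \eqref{dapde} with $\mathcal{B}=0$ is a cone and so cannot lie in $\{\sum\eta=1\}$. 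Hence $\mathcal{B}\ge1$. The $m$-dimensional $\mathcal{P}^m$ cannot sit inside the hyperplane, but $M$ is a component of its intersection with the hyperplane, which has dimension $m-1$. This is a contradiction, so case (i) cannot occur.
\item[(ii)] $\iota(M)\subset\Delta^m(b)\subset\mathcal{P}^{m+1}$. Here I must show that the $\sigma$-hyperplane $\{\sum\sigma^i=b\}$ coincides with $\{\sum_\omega\eta^\omega=1\}\cap\mathcal{P}^{m+1}$. Both are $m$-dimensional affine pieces of $\mathcal{P}^{m+1}$ containing $M$, so they agree. Rewriting $\{\sum\eta=1\}\cap\mathcal{P}^{m+1}$ in $p$-coordinates gives \eqref{dtusub}, where the $b_j$ entering $\mathcal{P}^{m+1}$ lie in $(0,1)$ as values of probabilities.
\end{itemize}
The main obstacle I expect is bookkeeping. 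I need to check that the index set and $\phi$ transfer correctly, in particular that $\phi$ lands outside the $i$'s and $j$'s, and that the doubly autoparallel case matches $\mathcal{B}=0$.

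For the supplementary claims:

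(1) Doubly autoparallel implies $\widetilde{H}$-tangency. If $\mathcal{B}=0$, the map $f(q)=\sum_{i}q(i)Q_i$, where $Q_i$ is supported on $\{i\}\cup\phi^{-1}(i)$ with weights $1$ and $a_k$, normalized by $1+\sum_{k\in\phi^{-1}(i)}a_k$, is a Markov embedding onto $\Xi$. Its image is $\nabla^{(\mathrm{e})}$- and $\nabla^{(\mathrm{m})}$-autoparallel by Chentsov invariance. Conversely, if $\mathcal{B}\neq0$ then $H^{(\mathrm{e})}\neq0$, since $\Xi$ is a level set $\eta^{j}=b$ of Example~\ref{umprob} type inside a Markov image.

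(2) Take $f$ to be the Markov embedding of $\Delta^{m+1}$ that merges the $j_l$'s into one block, which also handles the $\phi$-blocks. Then $\Xi$ becomes $\{p(\omega_0)=b\}$, as in Example~\ref{umprob}.

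(3) Using Remark~\ref{dapcoor}, $\nabla$ has the form $\nabla_{\partial_i}\partial_j=-\delta_{ij}\sigma_i^{-1}\partial_i$ in the $\sigma$-coordinates. Its geodesics are $\sigma^i(t)=c_ie^{k_it}$ in $\log$-affine coordinates, so $\nabla$ is complete exactly on the full domain. Any proper open subset gives geodesics that leave it in finite time, so $\nabla|_M$ is complete if and only if $M=\Xi$.
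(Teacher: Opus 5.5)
Your proof of the main equivalence is essentially the paper's. You reduce via Lemma~\ref{umbequi}, apply Theorem~\ref{dtudenorm2}, and use the identity $\sum_\omega\eta^\omega=\sum_i\sigma^i+\sum_l b_{j_l}$ on $\mathcal{P}^{m+1}$. Your dimension count excluding case (1) is fine: $\sum_\omega\eta^\omega$ is a nonconstant affine function on every $\mathcal{P}^m$, so the cone and radial-field remarks are not even needed.

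For (1) and (2) you write the Markov embeddings down directly, using blocks $\{l\}\cup\phi^{-1}(l)$ plus one block $\{j_1,\ldots,j_{\mathcal{B}}\}$, under which $\Xi$ becomes $\{q(\omega_0)=\sum_l b_{j_l}\}$. The paper instead uses the sphere-radius argument in the $y$-coordinates and the cone $W=V\oplus\mathbb{R}p$. Your route is legitimate and more concrete, but two justifications need repair:
\begin{itemize}
\item A Markov image is doubly autoparallel not ``by Chentsov invariance'', which only identifies the induced (tangential) structure. It is doubly autoparallel because it is affine both in $p$ and in $\log p$.
\item For $\mathcal{B}\neq0$ you need $H^{(1)}\neq0$. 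This follows from $H^{(-1)}=0$ and $H^{(0)}\neq0$ in Example~\ref{umprob}: the image is a sphere of radius $2\sqrt{1-b}<2$. The conclusion persists in $\Delta^n$ because $f(\Delta^{m+1})$ is doubly autoparallel.
\end{itemize}

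\textbf{The genuine gap is in (3).} The formula $\nabla_{\partial_i}\partial_j=-\delta_{ij}(\sigma^i)^{-1}\partial_i$ from Remark~\ref{dapcoor} is the connection $\widetilde\nabla$ of the $(m+1)$-dimensional $\mathcal{P}^{m+1}$. It is not the connection induced on the hypersurface $\Xi=\{\sum_i\sigma^i=b\}$. That hypersurface is $\Delta^m(b)$, which is not $\widetilde\nabla$-autoparallel: its $\widetilde\nabla$-mean curvature is $2\widehat H\neq0$. Concretely, a curve $\sigma^i=c_ie^{k_it}$ with all $c_i>0$ stays in $\{\sum_i\sigma^i=b\}$ only if every $k_i=0$. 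So you have shown completeness of the ambient connection, and nothing about $\nabla$ on $\Xi$.

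What you need is a global $\nabla$-affine chart of $\Xi$ onto $\mathbb{R}^m$. One choice is $\theta^i=\log(\sigma^i/\sigma^{m+1})$ for $1\le i\le m$. This works because dilations of $(\mathbb{R}^+)^{m+1}$ preserve $D$ and rescale $g_0$ by a constant. Hence $(\Xi,g,\nabla)$ is $(\Delta^m,g^F,\nabla^{(\mathrm e)})$ with the metric multiplied by a constant, and its geodesics are $\sigma^i(t)=b\,c_ie^{k_it}/\sum_{j=1}^{m+1}c_je^{k_jt}$.

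The paper reaches the same conclusion differently. It uses the potential $\psi$ with $g=\nabla^{*}d\psi$ and Proposition~\ref{comphess}, since $\partial\psi/\partial\rho^i=\log\rho^i-\log(b-\sum_l\rho^l)$ maps $\Xi$ diffeomorphically onto $\mathbb{R}^m$.

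Once such a chart is in place, your converse argument goes through. A straight $\theta$-line from a point of $M$ to a point of $\Xi\setminus M$ gives a geodesic of $M$ that cannot be defined on all of $\mathbb{R}$.
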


We note that the affine connection $\nabla^{(\mathrm{e})}$ is complete on $\Delta^n$, which will be proved in Example~\ref{compexp} of the appendix.\\

\begin{proof}
    Define $\Xi$ by \eqref{dtusub}, and let $\iota:\Delta^n\to(\mathbb{R}^+)^{n+1}$ be the embedding map in Example~\ref{probsimpemb}.
    Then $\Xi$ is a submanifold of $\mathcal{P}^{m+1}=P^{m+1}\cap(\mathbb{R}^+)^{n+1}$ where
    \begin{align}
        \label{dubhypplane}
        P^{m+1} = \left\{(\eta^1,\ldots,\eta^{n+1})\in\mathbb{R}^{n+1}\large\mid \eta^{i_k} = a_{i_k}\eta^{\phi(i_k)},\,\eta^{j_l} = b_{j_l}\right\},
    \end{align}
    and $(\eta^1,\ldots,\eta^{n+1})$ is the $D^*$-affine coordinate system defined in Example~\ref{probsimpemb}.
    If we define the global coordinate system $(\sigma^1,\ldots,\sigma^{m+1})$ in Remark~\ref{dapcoor} on $\mathcal{P}^{m+1}$, then
    \begin{align}
        \label{xirep}
        \Xi = \left\{(\sigma^1,\ldots,\sigma^{m+1})\in\mathcal{P}^{m+1} \mid \sum_{i=1}^{m+1}\sigma^i = 1 - \sum_{l=1}^{\mathcal{B}}b_{j_l}\right\}.
    \end{align}
    By Theorem~\ref{dtudenorm2} $(2)$, the manifold $M$ is a doubly totally-umbilical submanifold of $((\mathbb{R}^+)^{n+1},g_0,D)$, and by Lemma~\ref{umbequi} it is also a doubly totally-umbilical submanifold of $(\Delta^n,g^F,\nabla^{(\mathrm{e})})$.\\

    Conversly, assume that $M$ is a doubly totally-umbilical submanifold of $(\Delta^n,g^F,\nabla^{(\mathrm{e})})$.
    Let $\iota:\Delta^n\to(\mathbb{R}^+)^{n+1}$ the embedding in Example \ref{probsimpemb}, and identify $\Delta^n$ with $\iota(\Delta^n)$.
    Since $\Delta^n$ is a doubly totally-umbilical but not doubly autoparallel submanifold of $((\mathbb{R}^+)^{n+1},g_0,D)$, so is $M\subset(\mathbb{R}^+)^{n+1}$ by Lemma~\ref{umbequi}.
    By Theorem~\ref{dtudenorm2}, there exists an affine plane $P^{m+1}\subset\mathbb{R}^{n+1}$ of the Euclidean space $(\mathbb{R}^{n+1},g_0)$, such that $M^m\subset P^{m+1}$ and $\mathcal{P}^{m+1} = P^{m+1}\cap(\mathbb{R}^+)^{n+1}$ is a doubly autoparallel submanifold of $((\mathbb{R}^+)^{n+1},g_0,D)$.
    Thus, the affine plane $P^{m+1}$ is given by the form of \eqref{dubhypplane}.
    If we set $\Xi=\mathcal{P}^{m+1}\cap\Delta^n$, then it follows that $\Xi$ is an $m$-dimensional doubly totally-umbilical submanifold of $(\Delta^n,g^F,\nabla^{(\mathrm{e})})$ and $M\subset\Xi$ holds.\\

    In order to prove (1) and (2), we prove that $\Xi=\Xi_{a_{i_1},\ldots,a_{i_{\mathcal{A}}},b_{j_1},\ldots,b_{j_{\mathcal{B}}},\phi}$ is contained in the sphere $S^m(r)$ for some $0<r\leq2$.
    Indeed, on the standard coordinate system $(y^1,\ldots,y^{n+1})$ of $\mathbb{R}^{n+1}$, we have $\Delta^n=\{(y^1,\ldots,y^{n+1})\in(\mathbb{R}^+)^{n+1}\mid \sum_{i=1}^{n+1}(y^i)^2 = 4\}$, thus $\Xi\subset P^{m+1}\cap S^{n}(2)$.
    Again, we identify $\mathbb{R}^{n+1}$ with $T_{0}\mathbb{R}^{n+1}$ by the $D^{g_0}$-affine coordinate $(y^1,\ldots,y^{n+1})$, where $0\in\mathbb{R}^{n+1}$ is the origin.
    Let $p\in P^{m+1}$ be the point minimizing $\|p\|_{g_0}$ on $P^{m+1}$, and let $V=\{q-p\mid q\in P^{m+1}\}$.
    We have $P^{m+1} = \{p+v\mid v\in V\}$, and for each $q=p+v\in P^{m+1}\cap S^{n}(2)$, since $g_0(p,v)=0$ it holds that
    \begin{align}
    4=\|q\|_{g_0}^2=\|p\|_{g_0}^2+\|v\|_{g_0}^2.
    \end{align}
    Hence $P^{m+1}\cap S^{n}(2)$ is an Euclidean sphere $S^m(r)$ where $r=\sqrt{4-\|p\|_{g_0}^2}$.\\
    
    We prove (1).
    The sphere $S^m(r)$ is an autoparallel manifold of the Euclidean sphere $(S^n(2),\nabla^{g_0})$ if and only if $r=2$ (see \cite{Chen2019} for example).
    In this case, $p\in P^{m+1}$ is the origin $0\in\mathbb{R}^{n+1}$, which is equivalent to $\mathcal{B}=0$.
    Since $(\Delta^n,g^F)$ is an open Riemannian submanifold of $(S^n(2),g_0)$ and $M$ is already a $\nabla^{\mathrm{(m)}}$-autoparallel submanifold of $\Delta^n$, the manifold $M$ is a doubly autoparallel submanifold of $(\Delta^n,g^F,\nabla^{\mathrm{(e)}})$, if and only if $\mathcal{B}=0$.\\

    For the doubly autoparallel submanifold $\Xi=\Xi_{a_{i_1},\ldots,a_{i_{\mathcal{A}}},\phi}$, we can assume that $(i_1,\ldots,i_{\mathcal{A}})=(m+2,\ldots,n+1)$.
    Define $C_l=\{l\}\cup\phi^{-1}(l)$ and $Q_l:\Omega_{n+1}\to[0,\infty)$ by
    \begin{align}
        Q_l(k) =
        \begin{cases}
            1, &k\in\Omega_{m+1},\\
            a_k, &k\in\phi^{-1}(l),\\
            0, &k\notin\Omega_{m+1}\cup\phi^{-1}(l)
        \end{cases}
    \end{align}
    for each $l\in\Omega_{m+1}$.
    Then, if we define the diffeomorphism $f:\Delta^{m}\to\Xi$ as in equation \eqref{markovemb} by using $\{C_l\}$ and $\{Q_l\}$, the map $\iota\circ f$ is a Markov embedding, and $\iota\circ f:(\Delta^m,g^F,\nabla^{(\mathrm{e})})\to(\Delta^n,g^F,\nabla^{(e)})$ is a statistical immersion.
    Thus, the diffeomorphism $f:(\Delta^m,g^F,\nabla^{(\mathrm{e})})\to(\Xi,g,\nabla)$ is a statistical one.\\
    
    Now we prove (2).
    Suppose that $\mathcal{B}\neq 0$.
    Again, we assume that $(i_1,\ldots,i_{\mathcal{A}})=(m+2,\ldots,m+1+\mathcal{A})$ and $(j_1,\ldots,j_{\mathcal{B}})=(m+2+\mathcal{A},\ldots,n+1)$, by reordering the elements of $(\eta^1,\ldots,\eta^{n+1})$ if necessary.
    We identify $\mathbb{R}^{n+1}$ with $T_{0}\mathbb{R}^{n+1}$ by the $D^{*}$-affine coordinate $(\eta^1,\ldots,\eta^{n+1})$, where $0\in\mathbb{R}^{n+1}$ is the origin.
    Since $\mathcal{B}\neq0$, the point $p\in P^{m+1}$ minimizing $\|p\|_{g_0}$ is $p=(0,\ldots,0,b_{j_1},\ldots,b_{j_{\mathcal{B}}})$.
    We define vector spaces $V,W$ by $V = \{q-p\mid q\in P^{m+1}\}$ and
    \begin{align*}
        W = V\oplus\mathbb{R}p.
    \end{align*}
    By taking linear combinations of a basis of $W$, we obtain
    \begin{align*}
        W = \left\{(\eta^1,\ldots,\eta^{n+1})\in\mathbb{R}^{n+1}\large\mid \eta^{i_k} = a_{i_k}\eta^{\phi(i_k)},\,\eta^{j_l} = \frac{b_{j_l}}{b_{j_1}}\eta^{j_1}\right\}.
    \end{align*}
    Hence $\mathcal{W}=W\cap(\mathbb{R}^+)^{n+1}$ is a doubly autoparallel submanifold of $((\mathbb{R}^+)^{n+1},g_0,D)$.
    By $(1)$, the set $\Theta = \mathcal{W}\cap\Delta^{n}$ is a doubly autoparallel submanifold of $(\Delta^{n},g^F,\nabla^{(\mathrm{e})})$, and there exists a Markov embedding $f:\Delta^{m+1}\to\Delta^n$ such that $f(\Delta^{m+1})=\Theta$ holds.
    Since $\Xi\subset\Theta$ holds by construction, if we identify $\Theta$ by $\Delta^{m+1}$, then $(\Xi,g,\nabla)$ is a doubly totally-umbilical statistical submanifold of $(\Delta^{m+1},g^F,\nabla^{(\mathrm{e})})$.\\

    Lastly, we prove $(3)$.
    Using the global coordinate system $(\sigma^1,\ldots,\sigma^{m+1})$ in Remark~\ref{dapcoor} on $\mathcal{P}^{m+1}$, we identify $(\mathcal{P}^{m+1},\widetilde{g},\widetilde{\nabla})$ with the statistical submanifold $((\mathbb{R}^+)^{m+1},g_0,D)$.
    We also have that $\Xi$ is expressed by \eqref{xirep} on this global coordinate, so we define a coordinate system $(\rho^1,\ldots,\rho^m)$ on $\Xi$ by $(\rho^1(p),\ldots,\rho^m(p))=(\sigma^1(p),\ldots,\sigma^m(p)),\,p\in\Xi$.
    Then $(\rho^1,\ldots,\rho^m)$ is an affine coordinate system of the conjugate connection $\nabla^{*}$ of the statistical structure $(g,\nabla)$ on $\Xi$ induced by $(\widetilde{g},\widetilde{\nabla})$.
    If we define
    \begin{align}
        \psi(\rho^1,\ldots,\rho^m) = \sum_{i=1}^m\rho^{i}\log\rho^i + \left(1-\sum_{l=1}^{\mathcal{B}}b_{j_l}-\sum_{i=1}^m\rho^i\right)\log\left(1-\sum_{l=1}^{\mathcal{B}}b_{j_l}-\sum_{i=1}^m\rho^i\right)
    \end{align}
    on the coordinate system $(\rho^1,\ldots,\rho^m)$, we have $g=\nabla^*d\psi$.
    From Proposition \ref{comphess}, we obtain that $\nabla$ is complete on $\Xi$.
    Consequently $\nabla|_{M}$ is complete on $M$ if and only if $M=\Xi$.\\
\end{proof}

\begin{remark}
    Theorem~\ref{maint} also gives the complete classification of doubly autoparallel submanifolds in the probability simplex.
    A.~Ohara and H.~Ishi also gave this classification by an algebraic characterization \cite{ohara2018doubly}.\\
\end{remark}

\begin{remark}
    For the probability simplex $(\Delta^n,g^F,\nabla^{(\mathrm{e})})$, if we fix $1\leq\mathcal{B}\leq n-2$ distinct integers $j_1,\ldots,j_{\mathcal{B}}\in\{1,\ldots,n+1\}$, we obtain a foliation
    \begin{align*}
        \left\{\Xi_{b_{j_1},\ldots,b_{j_{\mathcal{B}}}}\mid (b_{j_1},\ldots,b_{j_{\mathcal{B}}})\in\Delta^{\mathcal{B}}\right\},
    \end{align*}
    of $\Delta^n$ where each leaf is a doubly totally-umbilical submanifold.
    Indeed, if we assume $n+1\notin\{j_1,\ldots,j_{\mathcal{B}}\}$ for simplicity, then we have $\eta^{j_l}=b_{j_l}$ on the $\nabla^{(\mathrm{m})}$-affine coordinate system $(\eta^1,\ldots,\eta^n)$ defined in Example~\ref{probsimp}.
    Foliations of statistical manifolds have been extensively studied in information geometry \cite{Uohashi2022, Gnandi2026, BoyomWolak2016}.\\
\end{remark}

\begin{appendices}
    \section{Completeness on Hessian manifolds}
    We assume that the manifold $M$ is connected throughout this appendix.\\
    \begin{definition}
        Let $\nabla$ be an affine connection on $M$.
        A $\nabla$\textit{-geodesic} is a curve $\gamma:(a,b)\to M$ satisfying
        \begin{align}
            \nabla_{\dot{\gamma}}\dot{\gamma}  = 0,
        \end{align}
        where $\dot{\gamma}$ is the velocity vector field of $\gamma$.
        The affine connection is called \textit{complete}, if for any $(p,v)\in TM$ there exists a $\nabla$-geodesic $\gamma:\mathbb{R}\to M$ such that $\gamma(0)=p$ and $\dot{\gamma}(0)=v$.\\
    \end{definition}

    In general, completeness of the affine connection $\nabla$ of a statistical manifold $(M,g,\nabla)$ is difficult to determine. 
    However if $(M,g,\nabla)$ is Hessian and there is a global affine coordinate system $(x^1,\ldots,x^m)$ with respect to $\nabla$, on $(x^1,\ldots,x^m)$ the $\nabla$-geodesics in this coordinate are given by
    \begin{align}
        x^i(\gamma(t)) = a^it + b^i,
    \end{align}
    where $a^1,\ldots,a^m,b^1,\ldots,b^m$ are constants.
    In particular, the completeness of $\nabla$ is equivalent to the coordinate map $(x^1,\ldots,x^m)$ is onto $\mathbb{R}^m$.
    Thus, the mixture connection $\nabla^{(\mathrm{m})}$ in Example~\ref{probsimp} is not complete.\\

    The following proposition characterizes completeness of the conjugate connection.\\
    \begin{proposition}
        \label{comphess}
        Let $(M,g,\nabla)$ be a simply connected Hessian manifold.
        Assume that there exists a global affine coordinate system $(x^1,\ldots,x^m)$ on $M$ with respect to $\nabla$ and a $\psi\in C^{\infty}(M)$ such that $g=\nabla d\psi$.
        Then the conjugate connection $\nabla^{*}$ is complete if and only if the map
        \begin{align}
            \label{diffeom}
            \Psi(p) = \left(\frac{\partial \psi}{\partial x^1}(p),\ldots,\frac{\partial \psi}{\partial x^m}(p)\right), \quad p\in M
        \end{align}
        is a diffeomorphism from $M$ to $\mathbb{R}^m$.\\
    \end{proposition}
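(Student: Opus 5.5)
The idea is to exploit the duality between $\nabla$ and $\nabla^{*}$ on a Hessian manifold. In the $\nabla$-affine chart $(x^1,\ldots,x^m)$ with $g=\nabla d\psi$, the functions $x_i^{*}=\partial\psi/\partial x^i$ give a $\nabla^{*}$-affine coordinate system. The plan is to show this directly: $g(\partial/\partial x^i,\partial/\partial x^j)=\partial_i\partial_j\psi$, so $dx^{*}_i(\partial/\partial x^j)=\partial_i\partial_j\psi=g_{ij}$. Hence the coordinate vector fields $\partial/\partial x^{*}_i$ are $g$-dual to $\partial/\partial x^j$, that is, $g(\partial/\partial x^j,\partial/\partial x^{*}_i)=\delta^i_j$. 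Differentiating this constant pairing with the defining identity of the conjugate connection gives
\[
0=g\left(\nabla_X\frac{\partial}{\partial x^j},\frac{\partial}{\partial x^{*}_i}\right)+g\left(\frac{\partial}{\partial x^j},\nabla^{*}_X\frac{\partial}{\partial x^{*}_i}\right).
\]
The first term vanishes because $\partial/\partial x^j$ is $\nabla$-parallel, so $\nabla^{*}\partial/\partial x^{*}_i=0$. Since $(g_{ij})$ is nondegenerate, $\Psi$ is a local diffeomorphism, and the $x^{*}_i$ are locally affine coordinates for $\nabla^{*}$.

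Next comes the direction ``$\Psi$ is a diffeomorphism onto $\mathbb{R}^m$ $\Rightarrow$ $\nabla^{*}$ complete''. If $\Psi$ is a global diffeomorphism, then $(x^{*}_1,\ldots,x^{*}_m)$ is a global $\nabla^{*}$-affine chart whose image is all of $\mathbb{R}^m$. As explained just before the proposition, $\nabla^{*}$-geodesics in this chart are the curves $x^{*}_i(\gamma(t))=a_it+b_i$. Each such line lies in $\mathbb{R}^m=\Psi(M)$ for all $t$, so its preimage under $\Psi$ is a geodesic defined on all of $\mathbb{R}$, and $\nabla^{*}$ is complete.

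The converse is the main obstacle. Assume $\nabla^{*}$ is complete. Then $(M,\nabla^{*})$ is a flat, complete, simply connected affine manifold, and its developing map is a diffeomorphism onto $\mathbb{R}^m$; this is the standard fact that a complete flat affine structure has universal cover affinely isomorphic to $\mathbb{R}^m$, and $M$ is already simply connected. The local $\nabla^{*}$-affine chart $\Psi$ is globally defined on $M$, so it serves as a developing map for $\nabla^{*}$, since developing maps are unique up to affine transformation. It follows that $\Psi$ is a diffeomorphism onto $\mathbb{R}^m$.

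If one wants to avoid the developing-map theorem, the fallback is a direct argument. First, $\Psi$ is a local diffeomorphism, and it maps $\nabla^{*}$-geodesics onto straight lines, with $\Psi(\exp^{*}_p(v))=\Psi(p)+d\Psi_p(v)$. Completeness then shows that $\Psi$ is surjective and has the path-lifting property for lines. Lifting straight-line homotopies shows that $\Psi$ is a covering map. Since $\mathbb{R}^m$ is simply connected and $M$ is connected, this covering is injective. I expect the main care to lie in verifying the covering property, i.e.\ that line-lifting via geodesics suffices, and I would follow the standard proof that a complete flat affine connection makes $\exp^{*}_p$ a covering.
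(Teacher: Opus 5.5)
Your proposal is correct. It shares with the paper the local computation: the paper likewise writes $\partial/\partial y^i=\sum_l g^{il}\partial/\partial x^l$ and differentiates the constant pairing with $\partial/\partial x^j$ to get $\nabla^{*}\partial/\partial y^i=0$. It also shares the easy direction, that a $\nabla^{*}$-affine chart onto $\mathbb{R}^m$ gives completeness.

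Where you differ is the direction ``$\nabla^{*}$ complete $\Rightarrow$ $\Psi$ is a diffeomorphism onto $\mathbb{R}^m$''.

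\textbf{The paper's route.} It does not argue this direction separately. It asserts, citing Amari, that $\Psi=(y^1,\ldots,y^m)$ is a \emph{global} $\nabla^{*}$-affine coordinate system, although its computation only establishes the local statement. It then invokes the remark before the proposition: for a global affine chart, completeness is equivalent to the chart being onto $\mathbb{R}^m$. Injectivity of $\Psi$ is therefore taken for granted. It is automatic when the image of $M$ under $(x^1,\ldots,x^m)$ is convex, since a gradient with positive definite Hessian on a convex domain is injective, and that case covers the paper's applications. It is not justified in general, and the simple-connectivity hypothesis is never used.

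\textbf{Your route.} You treat $\Psi$ only as an affine local diffeomorphism and extract injectivity from completeness, via the developing-map theorem or via your covering argument. This costs an appeal to the standard structure theory of complete flat affine manifolds, but it proves the statement as formulated without the unverified global-chart claim. Your fallback argument also shows that simple connectivity of $M$ is superfluous. A complete $\nabla^{*}$ makes $\Psi$ a covering of the simply connected $\mathbb{R}^m$ by the connected $M$, hence a bijection, whatever $\pi_1(M)$ is.
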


    \begin{proof}
        Define functions
        \begin{align}
            (y^1,\ldots,y^m) = \left(\frac{\partial \psi}{\partial x^1}(p),\ldots,\frac{\partial \psi}{\partial x^m}(p)\right).
        \end{align}
        Then, $(y^1,\ldots,y^n)$ is a global affine coordinate system of $M$ with respect to $\nabla^{*}$ (see \cite{MR1800071} for example).
        Indeed, if we let
        \begin{align}
            g_{ij} = g\left(\frac{\partial}{\partial x^i},\frac{\partial}{\partial x^j}\right),
        \end{align}
        then the condition $g = \nabla d\psi$ implies
        \begin{align}
            \frac{\partial}{\partial x^i} = \sum_{l=1}^m \frac{\partial y^l}{\partial x^i}\frac{\partial}{\partial y^l} = \sum_{l=1}^m \frac{\partial^2 \psi}{\partial x^i \partial x^l}\frac{\partial}{\partial y^l} = \sum_{j=1}^m g_{il}\frac{\partial}{\partial y^l}.
        \end{align}
        Hence, we have
        \begin{align}
            \frac{\partial}{\partial y^i} = \sum_{l=1}^m g^{il}\frac{\partial}{\partial x^l},
        \end{align}
        where $g^{ij}$ is the $(i,j)$-element of the inverse matrix of $G=(g_{ij})$.
        For any vector field $X\in\Gamma(TM)$, we obtain
        \begin{align}
            g\left(\nabla^*_{X}\frac{\partial}{\partial y^i},\frac{\partial}{\partial x^j}\right) &= Xg\left(\frac{\partial}{\partial y^i},\frac{\partial}{\partial x^j}\right) - g\left(\frac{\partial}{\partial y^i},\nabla_X\frac{\partial}{\partial x^j}\right)\\
            &= X\left(\sum_{l=1}^mg^{il}g_{lj}\right) = X\delta_{ij} = 0.
        \end{align}
        Since $\left\{\frac{\partial}{\partial x^1},\ldots,\frac{\partial}{\partial x^m}\right\}$ is a frame of $TM$, it follows that $\nabla^{*}_{X}\frac{\partial}{\partial y^i}=0$ for all $X\in\Gamma(TM)$.
        Therefore the coordinate system $(y^1,\ldots,y^m)$ is an affine coordinate of $\nabla^{*}$.
        The claim now follows since $\nabla$ is complete if and only if $(y^1,\ldots,y^m)$ is a coordinate map onto $\mathbb{R}^{m}$.\\
    \end{proof}

     \begin{example}
        \label{compexp}
        Consider the probability simplex $(\Delta^n,g^F,\nabla^{(\mathrm{m})})$, in Example~\ref{probsimp}, but with the mixture connection.
        The function represented by 
        \begin{align}
            \psi(\eta^1,\ldots,\eta^n) = \sum_{i=1}^n\eta^{i}\log\eta^i + \left(1-\sum_{i=1}^n\eta^i\right)\log\left(1-\sum_{i=1}^n\eta^i\right)
        \end{align}
        on the affine coordinate $(\eta^1,\ldots,\eta^n)$ with respect to $\nabla^{(\mathrm{m})}$ satisfies $g^F=\nabla^{(\mathrm{m})}d\psi$.
        Moreover,
        \begin{align}
            \frac{\partial \psi}{\partial \eta^i} = \log\eta^i - \log\left(1-\sum_{l=1}^n\eta^l\right).
        \end{align}
        Thus the map given by \eqref{diffeom} is a diffeomorphism between $\{(\eta^1,\ldots,\eta^n)\in(\mathbb{R}^+)^n\mid \sum_{l=1}^n\eta^l < 1\}$ and $\mathbb{R}^n$, hence by Proposition~\ref{comphess} the exponential connection $\nabla^{(\mathrm{e})}$ is complete.
    \end{example}

\end{appendices}

\subsection*{Acknowledgments}
The author would like to express his gratitude to Professor Takashi Kurose and Professor Hitoshi Furuhata for their valuable support and insightful discussions during the course of this research.





\vspace{-1pt}
\bibliography{sn-bibliography}

@book{Chen2019,
  author    = {Chen, Bang-Yen},
  title     = {Geometry of Submanifolds},
  edition   = {2nd},
  series    = {Dover Books on Mathematics},
  publisher = {Dover Publications},
  address   = {Mineola, New York},
  year      = {2019},
  pages     = {192},
  isbn      = {0-486-83278-3,978-0-486-83278-4},
  url       = {https://store.doverpublications.com/products/9780486832784}
}

@article{Chen_1980, 
title={Classification of totally umbilical submanifolds in symmetric spaces}, 
volume={30}, DOI={10.1017/S1446788700016414}, 
number={2}, 
journal={Journal of the Australian Mathematical Society. Series A. Pure Mathematics and Statistics}, 
author={Chen, Bang-Yen},
year={1980},
}

@article{JIMENEZ2022101862,
title = {Umbilical submanifolds of {$H^k\times S^{n-k+1}$}},
journal = {Differential Geometry and its Applications},
volume = {81},
year = {2022},
issn = {0926-2245},
doi = {https://doi.org/10.1016/j.difgeo.2022.101862},
url = {https://www.sciencedirect.com/science/article/pii/S0926224522000158},
author = {M.I. Jimenez and R. Tojeiro}
}

@article{Usiraj,
author = {Uddin, Siraj and Özel, Cenap},
year = {2014},
title = {A classification theorem on totally umbilical submanifolds in a cosymplectic manifold},
volume = {43},
journal = {Hacettepe Journal of Mathematics and Statistics}
}

@article{b47c6540-6bcd-33b7-ac07-4e5c2b947949,
 ISSN = {12203874, 20650264},
 URL = {http://www.jstor.org/stable/43678609},
 author = {Cătălin Angelo IOAN},
 journal = {Bulletin mathématique de la Société des Sciences Mathématiques de Roumanie},
 publisher = {Societatea de Științe Matematice din România},
 title = {Totally Umbilical Lightlike Submanifolds},
 urldate = {2026-06-05},
 number = {87},
 volume = {39},
 year = {1996}
}

@article{10.21099,
author = {Yuichiro Sato},
title = {{Totally umbilical submanifolds in pseudo-Riemannian space forms}},
volume = {45},
journal = {Tsukuba Journal of Mathematics},
number = {2},
publisher = {Department of Mathematics, University of Tsukuba},
year = {2021},
doi = {10.21099/tkbjm/20214502097},
URL = {https://doi.org/10.21099/tkbjm/20214502097}
}

@article{Siddesha2019,
  author  = {Siddesha, M. S. and Bagewadi, C. S. and Nirmala, D.},
  title   = {Totally umbilical proper slant submanifolds of para-Kenmotsu manifold},
  journal = {Cubo},
  volume   = {21},
  number   = {2},
  year     = {2019},
  doi      = {10.4067/S0719-06462019000200041}
}

@article{FURUHATA2011S86,
title = {Statistical hypersurfaces in the space of {H}essian curvature zero},
journal = {Differential Geometry and its Applications},
volume = {29},
year = {2011},
issn = {0926-2245},
doi = {https://doi.org/10.1016/j.difgeo.2011.04.012},
author = {Hitoshi Furuhata},
}

@Inbook{Nay,
author="Ay, Nihat
and Jost, J{\"u}rgen
and L{\^e}, H{\^o}ng V{\^a}n
and Schwachh{\"o}fer, Lorenz",
title="Finite Information Geometry",
bookTitle="Information Geometry",
year="2017",
publisher="Springer International Publishing",
address="Cham",
isbn="978-3-319-56478-4",
doi="10.1007/978-3-319-56478-4_2",
}

@INPROCEEDINGS{8006748,
  author={Nagaoka, Hiroshi},
  booktitle={2017 IEEE International Symposium on Information Theory (ISIT)}, 
  title={Information-geometrical characterization of statistical models which are statistically equivalent to probability simplexes}, 
  year={2017},
  volume={},
  number={},
  doi={10.1109/ISIT.2017.8006748}}

@book{AmariNagaoka2000,
  author    = {Amari, Shun-ichi and Nagaoka, Hiroshi},
  title     = {Methods of Information Geometry},
  series    = {Translations of Mathematical Monographs},
  volume    = {191},
  publisher = {American Mathematical Society},
  address   = {Providence, RI},
  year      = {2000}
}

@article{Zhang2007ANO,
  title={A note on curvature of $\alpha$-connections of a statistical manifold},
  author={Jun Zhang},
  journal={Annals of the Institute of Statistical Mathematics},
  year={2007},
  volume={59},
}

@incollection{Ohara2017,
  author    = {Ohara, Atsumi},
  title     = {On Affine Immersions of the Probability Simplex and Their Conformal Flattening},
  booktitle = {Geometric Science of Information},
  series     = {Lecture Notes in Computer Science},
  volume      = {10589},
  publisher   = {Springer},
  address     = {Cham},
  year        = {2017},
  doi         = {10.1007/978-3-319-68445-1_29}
}

@article{HF2013,
  title={Hessian manifolds of nonpositive constant {H}essian sectional curvature},
  author={HITOSHI Furuhata and TAKASHI Kurose},
  journal={Tohoku Mathematical Journal, Second Series},
  volume={65},
  year={2013},
}

@article{Fujiwara2026Companion,
  author       = {Yoshitaka Fujiwara},
  title        = {A companion to the Nagaoka--Amari structure on the denormalized state space},
  journal      = {Information Geometry},
  year         = {2026},
  doi          = {10.1007/s41884-026-00204-8}
}

@article{Uohashi2022,
  author    = {Uohashi, Keiko},
  title     = {Extended Divergence on a Foliation by Deformed Probability Simplexes},
  journal   = {Entropy},
  year      = {2022},
  volume    = {24},
  number    = {12},
  doi       = {10.3390/e24121736}
}

@article{BoyomWolak2016,
  author    = {Michel Nguiffo Boyom and Robert A. Wolak},
  title     = {Transversely Hessian foliations and information geometry},
  journal   = {International Journal of Mathematics},
  year      = {2016},
  volume    = {27},
  number    = {11},
  doi       = {10.1142/S0129167X16500920}
}

@article{Gnandi2026,
  author    = {Emmanuel Gnandi and Michel Nguiffo Boyom and St{\'e}phane Puechmorel},
  title     = {Canonical foliations of statistical manifolds with statistical models},
  journal   = {Information Geometry},
  volume    = {9},
  year      = {2026},
  doi       = {10.1007/s41884-026-00193-8}
}

@book{shima2007geometry,
  title={The Geometry of {H}essian Structures},
  author={Shima, H.},
  isbn={9789812700315},
  lccn={2007298479},
  year={2007},
  Address={Singapore},
  publisher={World Scientific}
}

@book{MR1800071,
author = {Amari, Shun-ichi},
title = {Information Geometry and Its Applications},
year = {2018},
isbn = {4431567437},
publisher = {Springer Publishing Company, Incorporated},
PAGES = {xiii+374},
address = {Tokyo}
}

@inproceedings{ohara2018doubly,
  author    = {Ohara, Atsumi and Ishi, Hideyuki},
  title     = {Doubly Autoparallel Structure on the Probability Simplex},
  booktitle = {Information Geometry and Its Applications},
  series    = {Springer Proceedings in Mathematics \& Statistics},
  volume    = {252},
  year      = {2018},
  publisher = {Springer International Publishing},
  doi       = {10.1007/978-3-319-97798-0_12}
}

@book{cencov2000statistical,
  title={Statistical decision rules and optimal inference},
  author={Cencov, Nikolai Nikolaevich},
  number={53},
  year={2000},
  publisher={American Mathematical Society}
}

@article{amami,
author = {Aydin, Muhittin and Mihai, Adela and Mihai, Ion},
year = {2015},
title = {Some Inequalities on Submanifolds in Statistical Manifolds of Constant Curvature},
volume = {29},
number = {3},
journal = {Filomat},
doi = {10.2298/FIL1503465A}
}

@incollection{FuruhataHasegawa2016,
  author       = {Hitoshi Furuhata and Izumi Hasegawa},
  title        = {Submanifold Theory in Holomorphic Statistical Manifolds},
  booktitle    = {Geometry of Cauchy-Riemann Submanifolds},
  publisher    = {Springer},
  address      = {Singapore},
  year         = {2016},
  isbn         = {978-981-10-0915-0},
  doi          = {10.1007/978-981-10-0916-7_7},
}

@article{Satoh2020StatisticalSubmanifolds,
  author       = {Satoh, Naoto and Furuhata, Hitoshi and Hasegawa, Izumi and Nakane, Toshiyuki and Okuyama, Yukihiko and Sato, Kimitake and Shahid, Mohammad Hasan and Siddiqui, Aliya Naaz},
  title        = {Statistical submanifolds from a viewpoint of the Euler inequality},
  journal      = {Information Geometry},
  volume       = {4},
  year         = {2020},
  doi          = {10.1007/s41884-020-00032-4},
  publisher    = {Springer},
}

@article{Furuhata2024Toward,
  author    = {Hitoshi Furuhata},
  title     = {Toward differential geometry of statistical submanifolds},
  journal   = {Information Geometry},
  volume    = {7},
  year      = {2024},
  doi       = {10.1007/s41884-022-00075-9},
  url       = {https://rdcu.be/c0iJz},
}

@article {MR4452143,
    AUTHOR = {Kobayashi, Shimpei and Ohno, Yu},
     TITLE = {On a constant curvature statistical manifold},
   JOURNAL = {Information Geometry},
    VOLUME = {5},
      YEAR = {2022},
      ISSN = {2511-2481,2511-249X},
   MRCLASS = {53B12 (53C15)},
MRREVIEWER = {Mohammed\ Jamali},
doi = {10.1007/s41884-022-00065-x},
}

\end{document}